\DeclareSymbolFontAlphabet{\mathbb}{AMSb}
\DeclareSymbolFontAlphabet{\mathbbl}{bbold}
\definecolor{HRlink}{HTML}{800006}
\definecolor{HRcite}{HTML}{2E7E2A}
\definecolor{HRfile}{HTML}{131877}
\definecolor{HRurl}{HTML}{8A0087}
\definecolor{HRmenu}{HTML}{727500}
\definecolor{HRrun}{HTML}{137776}
\crefname{equation}{}{} 
\declaretheorem{thm}[
numberwithin=section,
name=Theorem,
refname={Theorem,Theorems},
Refname={Theorem,Theorems},
]
\declaretheorem{lem}[
sharenumber=thm,
name=Lemma,
refname={Lemma,Lemmas},
Refname={Lemma,Lemmas},
]
\declaretheorem{cor}[
sharenumber=thm,
name=Corollary,
refname={Corollary,Corollaries},
Refname={Corollary,Corollaries},
]
\declaretheorem{prop}[
sharenumber=thm,
name=Proposition,
refname={Proposition,Propositions},
Refname={Proposition,Propositions},
]
\declaretheorem{conj}[
sharenumber=thm,
name=Conjecture,
refname={Conjecture,Conjectures},
Refname={Conjecture,Conjectures},
]
\declaretheorem{claim}[
sharenumber=thm,
style=definition,
name=Claim,
refname={Claim,Claims},
Refname={Claim,Claims},
]
[
sharenumber=thm,
style=definition,
name=Definition,
refname={Definition,Definitions},
Refname={Definition,Definitions},
]
\declaretheorem{eg}[
sharenumber=thm,
style=definition,
name=Example,
refname={Example,Examples},
Refname={Example,Examples},
]
\declaretheorem{rmk}[
sharenumber=thm,
style=definition,
name=Remark,
refname={Remark,Remarks},
Refname={Remark,Remarks},
]
\declaretheorem{prob}[
sharenumber=thm,
style=definition,
name=Problem,
refname={Problem,Problems},
Refname={Problem,Problems},
]
\newenvironment{claimproof}[1][\proofname]
{%
	\proof[#1]%
}
{%
	\endproof%
}
\newcommand{\tb}{\textbf}
\newcommand{\bb}[1]{\mathbb{#1}}
\newcommand{\res}[2]{#1 {\upharpoonright} #2}
\newcommand{\N}{\bb{N}}
\newcommand{\Z}{\bb{Z}}
\newcommand{\R}{\bb{R}}
\newcommand{\bbD}{\bb{D}}
\newcommand{\bbF}{\bb{F}}
\newcommand{\bbH}{\bb{H}}
\DeclareMathOperator{\proj}{proj}
\DeclareMathOperator{\graph}{graph}
\let\hom\relax
\DeclareMathOperator{\hom}{Hom}
\DeclareMathOperator{\diam}{diam}
\DeclareMathOperator{\FBU}{FBU}
\let\H\relax
\newcommand{\H}{\mathcal{H}}
\newcommand{\F}{\mathcal{F}}
\newcommand{\calP}{\mathcal{P}}
\newcommand{\E}{\bb{E}}
\newcommand{\G}{\bb{G}}
\begin{document}
\title{Invariant uniformization}
\author{Alexander S. Kechris and Michael Wolman}
\date{August 6, 2025}
\maketitle

\begin{abstract}
	Standard results in descriptive set theory provide sufficient conditions for a Borel set $P \subseteq \N^\N \times \N^\N$ to admit a Borel uniformization, namely, when $P$ has ``small'' sections or ``large'' sections. We consider an invariant analogue of these results: Given a Borel equivalence relation $E$ and an $E$-invariant Borel set $P$ with ``small'' or ``large'' sections, does $P$ admit an $E$-invariant Borel uniformization?

	For a given Borel equivalence relation $E$, we show that every $E$-invariant Borel set $P$ with ``small'' or ``large'' sections admits an $E$-invariant Borel uniformization if and only if $E$ is smooth. We also compute the definable complexity of counterexamples in the case where $E$ is not smooth, using category, measure, and Ramsey-theoretic methods.

	We provide two new proofs of a dichotomy of Miller classifying the pairs $(E, P)$ such that $P$ admits an $E$-invariant uniformization, for a Borel equivalence relation $E$ and a Borel $E$-invariant set $P$ with countable sections. In the process, we prove an $\aleph_0$-dimensional $(\G_0, \bbH_0)$ dichotomy, generalizing dichotomies of Miller and Lecomte. We also show that the set of pairs $(E, P)$ such that $P$ has ``large'' sections and admits an $E$-invariant Borel uniformization is $\bm{\Sigma^1_2}$-complete; in particular, there is no analog of Miller's dichotomy for $P$ with ``large'' sections.

	Finally, we consider a less strict notion of invariant uniformization, where we select a countable nonempty subset of each section instead of a single point.
\end{abstract}

\section{Introduction}

\subsection{Invariant uniformization and smoothness}

Given sets $X,Y$ and $P\subseteq X\times Y$ with $\proj_X (P) =X$, a \tb{uniformization} of $P$ is a function $f\colon X\to Y$ such that $\forall x \in X ((x,f(x))\in P)$. If now $E$ is an equivalence relation on $X$, we say that $P$ is \tb{$\boldsymbol{E}$-invariant} if $x_1 E x_2 \implies P_{x_1} = P_{x_2}$, where $P_x = \{y\colon (x,y) \in P\}$ is the $x$-\tb{section} of $P$. Equivalently this means that $P$ is invariant under the equivalence relation $E\times \Delta_Y$ on $X\times Y$, where $\Delta_Y$ is the equality relation on $Y$. In this case an \tb{$\boldsymbol{E}$-invariant uniformization} is a uniformization $f$ such that $x_1 E x_2 \implies f(x_1) = f(x_2)$.

If $E, F$ are equivalence relations on sets $X,Y$, resp., a \tb{homomorphism} of $E$ to $F$ is a function $f\colon X\to Y$ such that $x_1 E x_2 \implies f(x_1) F f(x_2)$. Thus an invariant uniformization is a uniformization that is a homomorphism of $E$ to $\Delta_Y$.

Consider now the situation where $X,Y$ are Polish spaces and $P$ is a Borel subset of $X\times Y$. In this case standard results in descriptive set theory provide conditions which imply the existence of Borel uniformizations. These fall mainly into two categories, see \cite[Section 18]{CDST}: ``small section'' and ``large section'' uniformization results. We will concentrate here on the following standard instances of these results:

\begin{thm}[Measure uniformization]\label{thm:measure-uniformization}
Let $X,Y$ be Polish spaces, $\mu$ a probability Borel measure on $Y$ and $P\subseteq X\times Y$ a Borel set such that $\forall x \in X (\mu (P_x) > 0)$. Then $P$ admits a Borel uniformization.
\end{thm}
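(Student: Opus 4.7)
The plan is a Borel tree-descent: pick a shrinking sequence of basic open neighborhoods in $Y$ whose intersections with $P_x$ have positive $\mu$-measure, and take their (singleton) intersection as $f(x)$.

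First I would reduce to the case $Y = 2^\bbN$ via a Borel isomorphism transferring $\mu$ to a Borel probability on $2^\bbN$, and write $N_s = \{y \in 2^\bbN : s \subseteq y\}$ for the basic clopens, $s \in 2^{<\bbN}$. The central measurability tool is that for every Borel $B \subseteq X \times Y$ the map $x \mapsto \mu(B_x)$ is Borel; this is a standard monotone class argument (clear on rectangles, and the class of Borel $B$ for which it holds is closed under complements and countable disjoint unions).

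Next I would pre-process $P$ by passing to a Borel subset $P^\ast \subseteq P$ whose sections $P^\ast_x$ are closed in $2^\bbN$ (equivalently compact) and still have positive $\mu$-measure. This is the main obstacle: a priori the tree-descent only yields $f(x) \in \overline{P_x}$, not $f(x) \in P_x$. One route is Borel inner regularity: using measurable selection into the hyperspace of compact subsets of $2^\bbN$, pick Borel in $x$ a compact $K(x) \subseteq P_x$ with $\mu(K(x)) \geq \tfrac12 \mu(P_x) > 0$ and set $P^\ast = \{(x, y) : y \in K(x)\}$. An alternative route is to refine the topology on $Y$ to a finer Polish topology with the same Borel sets in which $P$ becomes closed (Kechris, \emph{Classical DST}, Chapter~13), thereby making each section closed.

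Finally, construct $f$ by induction: set $s_0(x) = \emptyset$, and given $s_k(x)$ with $\mu(P^\ast_x \cap N_{s_k(x)}) > 0$, put $s_{k+1}(x) = s_k(x) {}^\frown i$ for the least $i \in \{0,1\}$ such that $\mu(P^\ast_x \cap N_{s_k(x) {}^\frown i}) > 0$; additivity of $\mu$ guarantees such $i$ exists, and the measurability tool ensures $s_{k+1}$ is Borel. Define $f(x) = \bigcup_k s_k(x) \in 2^\bbN$, a Borel function of $x$. Since $P^\ast_x$ is closed and each $P^\ast_x \cap N_{s_k(x)}$ is nonempty (being of positive measure), one obtains a sequence in $P^\ast_x$ converging to $f(x)$, whence $f(x) \in P^\ast_x \subseteq P_x$, giving the desired Borel uniformization.
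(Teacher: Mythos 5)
The paper does not actually prove this theorem; it quotes it as a standard ``large section'' uniformization result from \cite[Section~18]{DST}. So the comparison is with the classical proof. Your architecture (reduce to $Y = 2^\bbN$, use Borelness of $x \mapsto \mu(B_x)$, pass to compact sections, then run a tree descent) is the right one, and the final descent is correct: once $P^\ast_x$ is a compact subset of $2^\bbN$ of positive measure, the sets $P^\ast_x \cap N_{s_k(x)}$ are nonempty, nested, compact, and of shrinking diameter, so their intersection is $\{f(x)\}$ with $f(x) \in P^\ast_x$, and Borelness of $f$ follows from the measurability of $x \mapsto \mu(P^\ast_x \cap N_s)$.

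The genuine gap is the step you yourself flag as ``the main obstacle'' and then do not prove: producing, Borel-in-$x$, a compact $K(x) \subseteq P_x$ of positive measure. Neither of your routes works as stated. For route (a), a measurable selection argument would need the relation $\{(x,K) \in X \times K(2^\bbN) : K \subseteq P_x\}$ to be Borel, and even then you would need a uniformization theorem to select from it, which is circular; in fact this relation is in general only $\bm{\Pi^1_1}$ --- already for a fixed $F_\sigma$ set $B$, the set $\{K : K \subseteq B\}$ can be $\bm{\Pi^1_1}$-complete, the classical example being $K(\bbQ)$ \cite[27.5]{DST}. For route (b), the Chapter~13 refinement theorem applies to a Borel subset of a single Polish space: refining the topology of $X \times Y$ to make $P$ clopen does not yield a fixed finer Polish topology on $Y$ in which every section $P_x$ is closed, and refining the topology of $Y$ alone only accommodates countably many Borel subsets of $Y$ at a time, whereas $\{P_x : x \in X\}$ may be an uncountable family of distinct Borel sets (e.g.\ when $P$ is a universal set). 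The standard way to fill the hole is a transfinite induction on the Borel rank of $P$ establishing parametrized inner regularity: for closed $P \subseteq X \times 2^\bbN$ the map $x \mapsto P_x \in K(2^\bbN)$ is itself Borel, so $K(x) = P_x$ works; for an increasing union $P = \bigcup_n P^n$ choose, Borel in $x$, the least $n$ with $\mu(P^n_x) > \mu(P_x) - \epsilon/2$ and recurse; for a decreasing intersection $P = \bigcap_n P^n$ take $K(x) = \bigcap_n K^n(x)$ with errors $\epsilon 2^{-n-1}$, using that countable intersection is a Borel operation on $K(2^\bbN)$. That induction (or the equivalent unfolding argument over a closed subset of $X \times Y \times \bbN^\bbN$) is the actual content of the theorem, and it is what your proposal is missing.
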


\begin{thm}[Category uniformization]\label{thm:category-uniformization}
Let $X,Y$ be Polish spaces and $P\subseteq X\times Y$ a Borel set such that $\forall x \in X (P_x \textrm{ is non-meager})$. Then $P$ admits a Borel uniformization.
\end{thm}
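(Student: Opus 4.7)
The plan is a Baire-category descent: build the Borel uniformization $f\colon X \to Y$ by locating $f(x)$ as the unique point of a shrinking nested sequence $V_{n_0(x)} \supseteq V_{n_1(x)} \supseteq \cdots$ of basic open sets in $Y$ in which $P_x$ stays comeager. Fix a complete metric on $Y$ and a countable basis $\{V_n\}_{n \in \bbN}$ with $\diam V_n \to 0$, and for each basic open $V$ let
\[
A_V := \{x \in X : P_x \cap V \text{ is comeager in } V\}.
\]
By the Kuratowski--Ulam theorem, each $A_V$ is Borel in $X$. Since $P$ is Borel, each section $P_x$ has the Baire property; combined with non-meagerness this forces $x \in A_{V_n}$ for some $n$, so $\bigcup_n A_{V_n} = X$.

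Next, define $n_k(x)$ by Borel recursion: $n_0(x) := \min\{n : x \in A_{V_n}\}$, and given $V_{n_k(x)}$ set
\[
n_{k+1}(x) := \min\{n : \bar V_n \subseteq V_{n_k(x)},\ \diam V_n < 2^{-k-1},\ x \in A_{V_n}\}.
\]
Such $n$ always exists because comeagerness of $P_x$ in $V_{n_k(x)}$ descends to every nonempty open subset. Each $n_k$ is Borel in $x$, so the nested closures with shrinking diameters pin down a unique point $f(x) \in \bigcap_k V_{n_k(x)}$, and $f\colon X \to Y$ is Borel.

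The main obstacle is verifying $(x, f(x)) \in P$: category-theoretic largeness of $P_x$ in each $V_{n_k(x)}$ only guarantees $f(x) \in \overline{P_x}$, not $f(x) \in P_x$. The standard fix is to first reduce to the case where $P$ is $G_\delta$ in $X \times Y$ via Polish-topology strengthening (which preserves the Borel structure, and therefore also the family of meager Borel sets, so the hypothesis on $P$ is unchanged). For $P = \bigcap_k U_k$ with $U_k$ open, one then additionally demands $V_{n_k(x)} \subseteq (U_k)_x$ at stage $k$; since $(U_k)_x \supseteq P_x$ is open dense in $V_{n_{k-1}(x)}$ whenever $P_x$ is comeager there, such a choice is available, and it forces $f(x) \in \bigcap_k (U_k)_x = P_x$. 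The technical heart of the argument is arranging this new containment condition to be Borel in $x$, which is accomplished by fixing a Borel code for $P$ and encoding the containment combinatorially through the countable basis structure on the refined topology.
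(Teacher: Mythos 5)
The paper does not actually prove this statement: it is quoted as a standard result from \cite[Section 18]{DST} (it is the category instance of the large-section uniformization theorem, 18.6 there), so I am judging your argument against the standard proof. Your descent through basic open sets is the right skeleton for the easy half: the sets $A_V$ are indeed Borel (though the relevant fact is the category-quantifier theorem \cite[16.1]{DST}, not Kuratowski--Ulam, which is the Fubini-type statement), the recursion producing $n_k(x)$ is Borel, and you correctly isolate the crux, namely that the limit point is a priori only in $\overline{P_x}$.

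Your resolution of the crux, however, does not work. First, a finer Polish topology on $X \times Y$ making $P$ a $G_\delta$ set need not be a product topology, so the sectionwise notions ``$P_x$ comeager in $V$'' for basic open $V \subseteq Y$ no longer mean anything in it; and you cannot achieve the reduction by refining only one factor --- refining $X$ alone cannot lower the complexity of $P$ in the $Y$-direction (take $X$ a singleton and $P_x$ properly $\bm{\Sigma^0_5}$), while refining $Y$ alone does not act on subsets of the product at all. Second, and more fundamentally, the parenthetical claim that a topology refinement ``preserves the family of meager Borel sets'' is false: meagerness is a topological notion, not a Borel-isomorphism invariant (e.g.\ $\bbQ \subseteq \bbR$ is meager, but becomes a nonempty open, hence non-meager, set after a refinement making it clopen). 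So the hypothesis ``$P_x$ non-meager'' is not preserved and the reduction to the $G_\delta$ case collapses. The genuine content of the theorem lies exactly at this point: the standard proof threads the descent simultaneously through the basis of $Y$ and through a Borel code for $P$, using that (i) if $P_x = \bigcup_k P^k_x$ is non-meager then some $P^k_x$ is non-meager, hence comeager in a smaller basic open set, (ii) countable intersections can be dovetailed, since comeagerness in $V$ passes to each intersectand, and (iii) a closed set comeager in an open set actually contains that open set, which is what finally forces the limit point into $P_x$. Your closing sentence gestures at this kind of bookkeeping, but only after an invalid reduction, so as written the proof has a gap at its main step.
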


\begin{thm}[$K_\sigma$ uniformization]\label{thm:K-sigma-uniformization}
Let $X,Y$ be Polish spaces and $P\subseteq X\times Y$ a Borel set such that $\forall x \in X (P_x \textrm{ is non-empty and } K_\sigma)$. Then $P$ admits a Borel uniformization.
\end{thm}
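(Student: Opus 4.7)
The plan is to adopt the two-stage strategy of Arsenin and Kunugui: first decompose $P$ into countably many Borel pieces with compact vertical sections, then uniformize each piece via a Borel selector on the hyperspace of compact subsets of $Y$, and finally glue the partial uniformizations on a Borel partition of $X$.

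For the decomposition, I would show that $P = \bigcup_n P_n$ for Borel sets $P_n \subseteq P$ each of which has compact (possibly empty) $x$-sections. This is the technical core of the argument. A standard route is to refine the Polish topology on $X$ so that $P$ becomes closed in the product with $Y$, while keeping the topology on $Y$ unchanged. For a closed subset of $X \times Y$ with $K_\sigma$ vertical sections, one can then build the decomposition using a countable basis $(U_k)$ for $Y$ and considering sets of the form $\{(x,y) \in P : y \in U_k \text{ and } \overline{U_k} \cap P_x \text{ is compact}\}$, intersected with appropriate Borel strata tracking the compactness witness. One checks these are Borel, have compact sections, and together cover $P$. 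Having obtained $(P_n)$, the projections $X_n = \mathrm{proj}_X(P_n)$ are Borel (a Borel set with compact sections has Borel projection, which falls out of the hyperspace framework used below); they cover $X$ since each $P_x$ is nonempty; and a Borel partition $X = \bigsqcup_n X_n'$ is obtained by setting $X_n' = X_n \setminus \bigcup_{m<n} X_m$.

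For the selection step, fix $n$ and restrict to $X_n'$: the map $x \mapsto (P_n)_x$ sends $X_n'$ Borel-measurably into the nonempty compact subsets of $Y$ equipped with the Vietoris topology. Composing with any Borel selector from this hyperspace to $Y$, constructed by iteratively choosing the least-indexed basic open set that meets the given compact set, with diameters tending to zero, produces a Borel uniformization $f_n\colon X_n' \to Y$ of $P_n \cap (X_n' \times Y)$. Setting $f(x) = f_n(x)$ for the unique $n$ with $x \in X_n'$ yields the desired global Borel uniformization.

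The main obstacle is the decomposition step. The pointwise hypothesis only asserts, for each $x$ separately, the existence of a countable family of compact sets covering $P_x$; upgrading this to a Borel-uniform choice of compact witnesses requires the change-of-topology maneuver together with a careful analysis of closed sets with $\sigma$-compact sections. Once that decomposition is in hand, the selection and gluing are essentially routine.
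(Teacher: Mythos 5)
Your overall architecture is the standard one and matches what the paper implicitly relies on by citing \cite[35.46]{DST}: first the Saint Raymond decomposition of a Borel set with $K_\sigma$ sections into countably many Borel sets with compact sections \cite[35.45]{DST}, then a Borel compact-valued selection on $K(Y)$, then gluing along the Borel partition $X_n' = X_n \setminus \bigcup_{m<n} X_m$. The selection and gluing steps are fine. The problem is that your justification of the decomposition step --- which you correctly identify as the technical core --- does not work, in two places.

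First, you cannot in general refine the Polish topology on $X$ alone so that a Borel set $P \subseteq X \times Y$ becomes closed in the product: take $X$ a singleton and $P_x = \bbQ \subseteq \bbR$, which is a nonempty $K_\sigma$ (even countable) section, yet no refinement of the topology on $X$ changes anything and $P$ is not closed. Change of topology on $X$ only handles sets built from rectangles $B \times V$ with $B \subseteq X$ Borel and $V \subseteq Y$ open, i.e.\ it can reduce to the case where $P$ has some fixed finite Borel rank in the $Y$-direction, not to the closed case. Second, even granting $P$ closed, the strata $\{x : \overline{U_k} \cap P_x \text{ is compact}\}$ are in general only $\bm{\Pi^1_1}$, not Borel (for $Y = \bbN^\bbN$ the set of compact elements of $F(Y)$ is $\bm{\Pi^1_1}$-complete); this is exactly why \cite[35.47]{DST} is stated as a $\bm{\Pi^1_1}$-ness result and why the paper has to invoke separation/boundedness whenever it uses such sets. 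So the ``appropriate Borel strata tracking the compactness witness'' that you invoke are precisely the objects whose existence is the content of the theorem. The genuine proof of the decomposition (Kechris \cite[35.43--35.45]{DST}, due to Saint Raymond) goes through a section-wise separation theorem established by reflection or by effective boundedness arguments, and none of that is recoverable from the steps you describe. As written, the hard part of the theorem is assumed rather than proved.
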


A special case of \cref{thm:K-sigma-uniformization} is the following:

\begin{thm}[Countable uniformization]
Let $X,Y$ be Polish spaces and $P\subseteq X\times Y$ a Borel set such that $\forall x \in X (P_x \textrm{ is non empty and countable})$. Then $P$ admits a Borel uniformization.
\end{thm}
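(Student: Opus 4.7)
The plan is to derive this directly from \cref{thm:K-sigma-uniformization}, as the header sentence ``A special case of \cref{thm:K-sigma-uniformization}'' already suggests. The only thing to verify is that under the hypothesis of countable nonempty sections, every section $P_x$ is automatically $K_\sigma$, after which the conclusion is immediate.

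First I would note that in any Polish (indeed, Hausdorff) space $Y$, every singleton $\{y\}$ is compact, so any countable subset $C = \{y_n : n \in \bbN\} \subseteq Y$ can be written as $C = \bigcup_{n\in\bbN} \{y_n\}$, exhibiting it as a $K_\sigma$ set. Applying this sectionwise, for each $x \in X$ the set $P_x$ is nonempty and $K_\sigma$. The Borel measurability of $P$ is preserved verbatim, so the hypotheses of \cref{thm:K-sigma-uniformization} are met.

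Invoking \cref{thm:K-sigma-uniformization} then yields a Borel uniformization $f\colon X \to Y$ of $P$, which is exactly what is required. There is no serious obstacle here; the only subtlety worth flagging is that we are using the trivial $K_\sigma$ decomposition into singletons rather than any compactness already present in $P_x$, so no further descriptive-set-theoretic machinery is needed beyond \cref{thm:K-sigma-uniformization} itself.
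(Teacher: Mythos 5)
Your proof is correct and matches the paper's approach exactly: the paper states the result as a special case of \cref{thm:K-sigma-uniformization}, and your observation that a countable set is $K_\sigma$ as a countable union of (compact) singletons is precisely the verification needed. No gaps.
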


Suppose now that $E$ is a Borel equivalence relation on $X$ and $P$ in any one of these results is $E$-invariant. When does there exist a \tb{Borel $\boldsymbol{E}$-invariant uniformization}, i.e., a Borel uniformization that is also a homomorphism of $E$ to $\Delta_Y$? We say that $E$ satisfies \tb{measure (resp., category, $\boldsymbol{K_\sigma}$, countable) invariant uniformization} if for every $Y, \mu, P$ as in the corresponding uniformization theorem above, if $P$ is moreover $E$-invariant, then it admits a Borel $E$-invariant uniformization.

The following gives a complete answer to this question. Recall that a Borel equivalence relation $E$ on $X$ is \tb{smooth} if there is a Polish space $Z$ and a Borel function $S\colon X\to Z$ such that $x_1 E x_2 \iff S(x_1) = S(x_2)$.

\begin{thm}\label{thm:equivalence-uniformization-smooth}
Let $E$ be a Borel equivalence relation on a Polish space $X$. Then the following are equivalent:
\begin{enumerate}[label=(\roman*)]
	\item $E$ is smooth;

	\item $E$ satisfies measure invariant uniformization;

	\item $E$ satisfies  category invariant uniformization;

	\item $E$ satisfies $K_\sigma$ invariant uniformization;

	\item $E$ satisfies countable invariant uniformization.
\end{enumerate}
\end{thm}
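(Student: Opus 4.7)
The plan is to prove $(i)\Rightarrow$ each of (ii)--(v) by descending $P$ to the smooth quotient, and each converse by reducing to $E_0$ via the Harrington--Kechris--Louveau dichotomy and constructing counterexamples for $E_0$ that transfer back to $E$.

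For $(i)\Rightarrow$ (ii)--(v), smoothness of $E$ is equivalent to the quotient $X/E$ being a standard Borel space with Borel quotient map $\pi\colon X\to X/E$. An $E$-invariant Borel $P\subseteq X\times Y$ descends to a Borel $\tilde P\subseteq X/E\times Y$ via $\tilde P_{[x]_E}=P_x$, which preserves each of the four section hypotheses; applying the corresponding non-invariant uniformization theorem to $\tilde P$ yields a Borel $\tilde f\colon X/E\to Y$, and $f:=\tilde f\circ\pi$ is the required $E$-invariant Borel uniformization.

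For the converse, assume $E$ is non-smooth; by HKL there is a continuous injection $\phi\colon 2^\bbN\to X$ with $x E_0 y\iff\phi(x)E\phi(y)$. The Borel set $\{(x,z)\in X\times 2^\bbN:\phi(z)Ex\}$ has countable sections (contained in single $E_0$-classes), so by Lusin--Novikov the $E$-saturation $X^*:=[\phi(2^\bbN)]_E$ is a Borel $E$-invariant subset of $X$, and there is a Borel $\psi\colon X^*\to 2^\bbN$ with $\phi(\psi(x))Ex$ for every $x\in X^*$; in particular $\psi$ is a Borel reduction of $E\upharpoonright X^*$ to $E_0$, and $\psi(\phi(z))E_0 z$ for every $z\in 2^\bbN$. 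Given any $E_0$-invariant Borel counterexample $P_0\subseteq 2^\bbN\times Y$, define
\[
P:=\{(x,y)\in X^*\times Y:(\psi(x),y)\in P_0\}\cup((X\setminus X^*)\times D),
\]
where $D$ is a single point for (iv) and (v), and $D:=Y$ for (iii) and (ii). Then $P$ is Borel and $E$-invariant with the appropriate section hypothesis, and any $E$-invariant Borel uniformization $f$ of $P$ yields $g:=f\circ\phi\colon 2^\bbN\to Y$, which is Borel, $E_0$-invariant, and satisfies $g(z)=f(\phi(z))\in (P_0)_{\psi(\phi(z))}=(P_0)_z$ by $E_0$-invariance of $P_0$, contradicting the choice of $P_0$.

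It remains to build the $E_0$-counterexamples. For (v) and (iv) take $P_0:=E_0\subseteq 2^\bbN\times 2^\bbN$: the sections are the countable (hence $K_\sigma$) $E_0$-classes, and an $E_0$-invariant Borel uniformization would be a Borel selector, forcing $E_0$ smooth---a contradiction. For (iii) and (ii) the key tool is the ergodicity of $E_0$: every Borel $E_0$-invariant function $2^\bbN\to Y$ is constant on a comeager (respectively conull) set. One constructs $E_0$-invariant Borel $P_0\subseteq 2^\bbN\times Y$ with non-meager (resp.\ positive-measure) sections such that $\{x:c\in(P_0)_x\}$ fails to be comeager (resp.\ conull) for every $c\in Y$, thereby excluding any essentially-constant invariant uniformization. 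The main technical obstacle lies here: $E_0$-invariance coupled with ergodicity tends to trivialize section families, so one must carefully balance ``large sections'' against ``no single $c$ is generic'' subject to Fubini and Kuratowski--Ulam constraints.
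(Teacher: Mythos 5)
Your treatment of the non-smooth direction is correct and is in fact a genuinely lighter route than the paper's. The paper proves a general transfer lemma (\cref{lem:uniformization-closed-reducibility}) showing that failure of (ii)--(v) is inherited upward under \emph{arbitrary} Borel reducibility, which forces it through $\bm{\Pi^1_1}$ co-invariant sets, Solovay's invariant $\bm{\Pi^1_1}$-ranks and boundedness. You instead exploit that the Harrington--Kechris--Louveau witness is an injective embedding $\phi$ of the \emph{countable} relation $E_0$, so that $\{(x,z):\phi(z)\mathrel{E}x\}$ has countable sections and Lusin--Novikov hands you a Borel inverse $\psi$ on the Borel saturation $[\phi(2^\bbN)]_E$; the pulled-back set $P$ and the composition $f\circ\phi$ then do exactly what you claim. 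This buys a more elementary proof of $\neg(\mathrm{i})\Rightarrow\neg(\mathrm{ii}),\dots,\neg(\mathrm{v})$, at the cost of the reusable generality of the paper's lemma. The counterexample $P_0=E_0$ for (iv) and (v) is also the paper's.

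There are, however, two genuine gaps. First, your opening claim in the forward direction is false: for a smooth Borel $E$ the quotient $X/E$ is in general only an \emph{analytic} Borel space, not a standard one (take $E=\ker S$ with $S$ Borel and $S(X)$ analytic non-Borel; then $X/E$ is Borel-isomorphic to $S(X)$ with its relative structure, and in particular smooth $E$ need not admit a Borel selector). Consequently the descended set is not a Borel subset of a product of Polish spaces: the natural extension $P^*(z,y)\iff\forall x\,(S(x)=z\Rightarrow P(x,y))$ is only $\bm{\Pi^1_1}$. For measure and category this is repaired by applying the large-section uniformization theorem for $\sigma(\bm{\Sigma^1_1})$ sets to $P^*$ (noting $P^*_z=Y$ off $S(X)$), but for the $K_\sigma$ and countable cases the non-invariant theorem genuinely requires a Borel set with $K_\sigma$ sections, and one must interpolate a Borel set between the $\bm{\Sigma^1_1}$ set $\{(z,y):\exists x\,(S(x)=z\wedge P(x,y))\}$ and $P^*$ by Lusin separation, and then separate again inside the $\bm{\Pi^1_1}$ set of $z$ whose sections are $K_\sigma$. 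None of this appears in your proposal, and it is where the content of $(\mathrm{i})\Rightarrow(\mathrm{iv}),(\mathrm{v})$ lies. Second, you never actually construct the $E_0$-counterexamples for (ii) and (iii); you only state the required properties and flag the construction as an obstacle. The paper's device is the measure/category duality on the Cantor group: fix an $F_\sigma$ set $A\subseteq 2^\bbN$ of Haar measure one that is meager and put $P(x,y)\iff\exists x'\mathrel{E_0}x\,(x'+y\in A)$, so every section is a conull union of translates of $A$, while any invariant Borel uniformization is generically constant $=y_0$ and would force a comeager invariant set into the meager set $\bigcup_{g\in G}(g+(A-y_0))$; the category counterexample is the exact dual using the complement of $A$. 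Without this (or an equivalent construction) the equivalences with (ii) and (iii) are not established.
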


The proof is given in \cref{sec:proof-equivalence-smooth}.

One can compute the exact definable complexity of counterexamples to invariant uniformization. Let $\E_0$ denote the non-smooth Borel equivalence relation on $2^\N$ given by $x \E_0 y \iff \exists m \forall n\geq m (x_n = y_n)$. In the proof of \cref{thm:equivalence-uniformization-smooth}, it is shown that for $E = \E_0$ on $X = 2^\N$ we have the following:
\begin{enumerate}[label=(\arabic*)]
	\item Failure of measure invariant uniformization: There are $Y, \mu$, and an $E$-invariant $P \in F_\sigma$ with $\mu (P_x) =1$ for all $x\in X$, which has no Borel $E$-invariant uniformization.

	\item Failure of category invariant uniformization: There are $Y$ and an $E$-invariant $Q\in G_\delta$ with $Q_x$ comeager for all $x\in X$, which has no Borel $E$-invariant uniformization.

	\item Failure of countable invariant uniformization: There are $Y$ and an $E$-invariant $P\in F_\sigma$ such that $P_x$ is non-empty and countable for all $x\in X$, which has no Borel $E$-invariant uniformization.
\end{enumerate}

The definable complexity of $Q, P$ in (2), (3) is optimal. In the case of measure invariant uniformization, however, there are counterexamples which are $G_\delta$, and this together with (1) gives the optimal definable complexity of counterexamples to measure invariant uniformization. These results are the contents of \cref{thm:complexity-of-counterexamples,thm:ramsey-example}, which we prove in \cref{sec:complexity-of-counterexamples}.

\begin{thm}\label{thm:ramsey-example}
	Let $X \subseteq 2^\N$ be the set of sequences with infinitely many ones. There is a Polish space $Y$, a probability Borel measure $\mu$ on $Y$ and an $\E_0$-invariant $G_\delta$ set $P \subseteq X \times Y$ such that $P_x$ is comeager and $\mu(P_x) = 1$ for all $x \in X$, which has no Borel $\E_0$-invariant uniformization.
\end{thm}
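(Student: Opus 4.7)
The plan is to take $Y = 2^\bbN$ with the fair Bernoulli product measure $\mu$ and, identifying each binary sequence with its support in $\bbN$ (so that $X$ is identified with $[\bbN]^\bbN$), to set
\[
    P = \bigl\{(x,y) \in X \times Y : x \cap y \text{ and } x \cap y^c \text{ are both infinite}\bigr\}.
\]
That $P$ is $G_\delta$ and $E_0$-invariant is immediate, since $|x \cap y| = \infty$ is a $\Pi^0_2$ condition preserved under finite modifications of either coordinate. To see that each section $P_x$ is comeager with $\mu(P_x) = 1$, I would split into cases: when both $x$ and $x^c$ are infinite, a direct Baire category and Borel--Cantelli argument shows that $y$ generically meets both $x$ and $x^c$ infinitely often; when $x$ is cofinite (including $x = \bbN$), $P_x$ collapses to the set of $y \in 2^\bbN$ that are neither finite nor cofinite, which is again comeager and conull.

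For the core claim that no Borel $E_0$-invariant uniformization exists, I would argue by contradiction using the following Ramsey lemma: for any Polish space $Z$ and any Borel $E_0$-invariant function $f\colon X \to Z$, there exists $H \in [\bbN]^\bbN$ on which $\res{f}{[H]^\bbN}$ is constant. To prove this lemma, I would apply the Galvin--Prikry theorem to each Borel set $f^{-1}(U_n)$, where $\{U_n\}$ is a countable basis of $Z$, then run a standard fusion to extract a single $H$ making $\res{f}{[H]^\bbN}$ continuous, and finally upgrade continuity to constancy by observing that every $E_0$-orbit is dense in $[H]^\bbN$. This Ramsey/fusion step is the main technical obstacle, although all of its ingredients are classical.

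Applying the lemma to a hypothetical Borel $E_0$-invariant uniformization $f$ yields $H \in [\bbN]^\bbN$ and $y_0 \in Y$ with $f(x) = y_0$ for every infinite $x \subseteq H$. The uniformization property then forces $x \cap y_0$ and $x \cap y_0^c$ both to be infinite for every such $x$. But $H$ partitions as the disjoint union $(H \cap y_0) \sqcup (H \cap y_0^c)$, so at least one of these two subsets of $H$ is itself infinite; plugging that subset in as $x$ makes the other of $x \cap y_0$, $x \cap y_0^c$ empty, which is the desired contradiction.
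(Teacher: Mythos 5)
Your proposal is correct and follows essentially the same route as the paper: the same set $P(A,B) \iff |A\cap B| = |A\setminus B| = \aleph_0$, the same use of the Ramsey property (Galvin--Prikry) to get continuity of $f$ on some $[H]^{\aleph_0}$, constancy via density of $E_0$-classes, and the same final contradiction obtained by plugging in an infinite subset of $H$ lying entirely inside $y_0$ or inside $y_0^c$. The only cosmetic difference is that the paper takes the specific witness $A\cap B$ rather than whichever of $H\cap y_0$, $H\cap y_0^c$ is infinite.
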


\begin{thm}\label{thm:complexity-of-counterexamples}
	Let $X, Y$ be Polish spaces, $E$ a Borel equivalence relation on $X$ and $P \subseteq X \times Y$ an $E$-invariant Borel relation. Suppose one of the following holds:
	\begin{enumerate}[label=(\roman*)]
		\item $P_x \in \bm{\Delta^0_2}$ and $\mu_x(P_x) > 0$, for all $x \in X$ and some Borel assignment $x \mapsto \mu_x$ of probability Borel measures $\mu_x$ on $Y$.
		\item $P_x \in F_\sigma$ and $P_x$ non-meager for all $x \in X$.
		\item $P_x \in G_\delta$ and $P_x$ non-empty and $K_\sigma$ (in particular countable) for all $x \in X$.
	\end{enumerate}
	Then there is a Borel $E$-invariant uniformization.
\end{thm}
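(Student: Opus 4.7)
The plan is to handle all three cases by exhibiting a canonical Borel selection $f\colon X\to Y$ with $f(x)\in P_x$ whose output depends only on the section $P_x$. Since $P$ is $E$-invariant, $P_{x_1}=P_{x_2}$ whenever $x_1 E x_2$, so such an $f$ is automatically $E$-invariant. The common strategy is that the extra complexity bound on $P_x$ (beyond the ``large section'' hypothesis) lets us canonically extract a simpler nonempty subset of $P_x$---with non-empty interior, compact closure, or a canonical density point---from which a Borel selection is straightforward.

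For case (ii), writing $P_x=\bigcup_n F_n^x$ as $F_\sigma$ with non-meager $P_x$ forces some closed $F_n^x$ to be non-meager, hence to have non-empty interior; thus $\operatorname{int}(P_x)\neq\emptyset$, and one selects the lex-least basic open $V_k\subseteq P_x$ in a fixed basis of $Y$, then a canonical point of $V_k$. Borel-measurability uses a Saint-Raymond decomposition of $P$ into closed-section pieces $A_n$, for which ``$V_k\subseteq A_n^x$'' is Borel (it reduces to containment on a countable dense subset of $V_k$). For case (iii), a non-empty $G_\delta$ $K_\sigma$ subset of a Polish space is locally compact Polish (by Hurewicz), so an Arsenin-Kunugui decomposition $P=\bigcup_n B_n$ produces compact-section pieces; the least $n$ with $B_n^x\neq\emptyset$ is Borel (by the Arsenin-Kunugui projection theorem), and the lex-least point of the compact $B_n^x$ yields $f(x)$. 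For case (i), the $\bm{\Delta^0_2}$ hypothesis lets one refine the topology on $Y$ to $\tau'_{P_x}$, generated by the original topology together with $\{P_x, Y\setminus P_x\}$; this is a Polish topology depending only on $P_x$, and makes $P_x$ clopen of positive $\mu_x$-measure. Iterative refinement along $\tau'_{P_x}$-basic opens of positive $\mu_x$-measure (all of which lie entirely in $P_x$, by $\tau'_{P_x}$-clopenness) with shrinking diameters yields a canonical point $f(x)\in P_x$.

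The main obstacle is to ensure the canonical selections are simultaneously Borel-measurable and depend only on $P_x$, for $E$-invariance. Measurability relies on standard structure theorems (Saint-Raymond, Arsenin-Kunugui) and Borel selection theorems (Kuratowski-Ryll-Nardzewski, Kuratowski-Ulam); the condition $\mu_x(V)>0$ on a basic open $V$ is Borel-checkable in $x$, providing a substitute for the $\bm{\Sigma^1_1}$ condition ``$V\cap P_x\neq\emptyset$'' needed in case (i). For $E$-invariance, the refined topology $\tau'_{P_x}$ in case (i) and the intrinsic decomposition formulas in cases (ii)--(iii)---e.g.\ $A_n^x=\{y : \operatorname{dist}(y, Y\setminus P_x)\geq 1/n\}$---depend only on $P_x$, so the final selection inherits this invariance from $P_x$.
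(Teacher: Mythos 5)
Your high-level strategy---make a selection from $P_x$ that is \emph{canonical}, i.e.\ depends only on the section $P_x$ (and $\mu_x$), so that $E$-invariance is automatic---is the right one, and it is also the paper's. But your implementation never resolves the tension between canonicity and Borel measurability, which is exactly where the work lies. The decompositions that are known to be Borel in $x$ (Saint-Raymond's closed-section decomposition in (ii), the Arsenin--Kunugui compact-section decomposition in (iii)) come from abstract theorems and are not functions of $P_x$ alone, so ``the least $n$ with $B_n^x\neq\emptyset$'' need not be $E$-invariant. The decompositions you substitute to restore canonicity are not Borel: the condition $\operatorname{dist}(y, Y\setminus P_x)\geq 1/n$, equivalently $B(y,1/n)\subseteq P_x$, is an uncountable universal quantification over a Borel set and is only $\bm{\Pi^1_1}$ a priori; reducing ``$V_k\subseteq A_n^x$'' to a countable dense subset of $V_k$ just pushes the problem into the definition of $A_n^x$ itself. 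Worse, in case (iii) the sets $\{y : \operatorname{dist}(y, Y\setminus P_x)\geq 1/n\}$ exhaust only $\operatorname{int}(P_x)$, which is typically empty for $K_\sigma$ (e.g.\ countable) sections, so this ``intrinsic decomposition'' selects nothing. In case (i), the nested positive-measure sets $V_{k_0}\cap\cdots\cap V_{k_m}\cap P_x$ have shrinking diameter only in the original metric, and since $P_x$ need not be closed the limit point can escape $P_x$; controlling diameters in a complete metric for $\tau'_{P_x}$ would fix this but reintroduces non-canonical, non-Borel data.

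The paper resolves the tension with a single device you may find useful: in each case it produces an $E$-invariant Borel assignment $x\mapsto F_x$ of \emph{closed} sets (as points of the Effros Borel space $F(Y)$) such that $P_x$ is non-meager and relatively $F_\sigma$ in $F_x$---namely $F_x=Y$ in (ii), $F_x$ the support of the normalized restriction of $\mu_x$ to $P_x$ in (i) (where $P_x$ is relatively comeager because its complement in $F_x$ is $F_\sigma$ and null for that measure), and $F_x=\overline{P_x}$ in (iii) (Borel by Arsenin--Kunugui). A non-meager relatively $F_\sigma$ set contains a nonempty relatively open set, so it meets the dense sequence of $E$-invariant Borel selectors $y_n(x)\in F_x$ provided by the Kuratowski--Ryll-Nardzewski selection theorem, and one takes the first $y_n(x)$ lying in $P_x$---a Borel condition, since it is just $(x,y_n(x))\in P$. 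Your case (ii) is salvageable by the special instance $F_x=Y$: a non-meager $F_\sigma$ section has nonempty interior, hence contains some member of a fixed countable dense subset of $Y$, and membership of a fixed point in $P_x$ is Borel. Cases (i) and (iii), however, genuinely need the support and the closure, respectively.
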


The proof of \cref{thm:ramsey-example} uses the Ramsey property.

\subsection{Local dichotomies}

The equivalence of (i) and (v) in \cref{thm:equivalence-uniformization-smooth} essentially reduces to the fact that if $E$ is a countable Borel equivalence relation (i.e., one for which all of its equivalence classes are countable) which is not smooth, then the relation
\[
(x,y) \in P \iff x Ey,
\]
is $E$-invariant with countable nonempty sections but has no $E$-invariant uniformization. Considering the problem of invariant uniformization ``locally'', Miller \cite{Mi-dichotomy} recently proved the following dichotomy that shows that this is essentially the only obstruction to (v). Below $\E_0\times I_\N$ is the equivalence relation on $2^\N \times \N$ given by $(x,m) \E_0\times I_\N (y,n) \iff x \E_0 y$. Also if $E,F$ are equivalence relations on spaces $X,Y$, resp., an \tb{embedding} of $E$ into $F$ is an injection $\pi\colon X \to Y$ such that $x_2  E x_2 \iff \pi (x_1) F \pi (x_2)$.

\begin{thm}[{\cite[Theorem 2]{Mi-dichotomy}}]\label{thm:ben-dichotomy}
Let $X,Y$ be Polish spaces, $E$ a Borel equivalence relation on $X$ and $P\subseteq X\times Y$ an $E$-invariant Borel relation  with countable non-empty sections. Then exactly of the following holds:
\begin{enumerate}[label=(\arabic*)]
	\item There is a Borel $E$-invariant uniformization,
	\item There is a continuous embedding $\pi_X\colon 2^\N \times \N \to X$ of $\E_0\times I_\N$ into $E$ and a continuous injection $\pi_Y\colon 2^\N \times \N \to Y$ such that for all $x, x' \in 2^\N \times \N$,
	\[\lnot (x \mathrel{\E_0 \times I_\N} x') \implies P_{\pi_X(x)} \cap P_{\pi_X(x')} = \emptyset\]
	and
	\[P_{\pi_X(x)} = \pi_Y([x]_{\E_0 \times I_\N}).\]
\end{enumerate}
\end{thm}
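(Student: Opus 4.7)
The plan is to deduce this dichotomy from the $\aleph_0$-dimensional $(G_0,H_0)$ dichotomy advertised in the abstract. First, the easy direction: (1) and (2) are mutually exclusive. If both held, then given an $E$-invariant Borel uniformization $f$, the map $g = f\circ\pi_X$ is $E_0\times I_\bbN$-invariant and satisfies $g(x)\in P_{\pi_X(x)} = \pi_Y([x]_{E_0\times I_\bbN})$. Since $\pi_Y$ is a continuous injection, $\pi_Y^{-1}\circ g$ is a well-defined Borel map selecting one element from each $[x]_{E_0\times I_\bbN}$, i.e., a Borel reduction of $E_0\times I_\bbN$ to equality, contradicting the non-smoothness of $E_0\times I_\bbN$.

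For the substantive direction, first apply Lusin--Novikov to decompose $P = \bigsqcup_n \operatorname{graph}(f_n)$, where each $f_n \colon A_n\to Y$ is a Borel partial function on a Borel $A_n\subseteq X$. This furnishes a canonical Borel enumeration $P_x = \{f_n(x) : x\in A_n\}$ of each section. Using this enumeration, encode invariant uniformization as a selection problem for a suitable Borel hypergraph $\bfh$ on $X$ relative to $E$: roughly, an edge of $\bfh$ consists of a tuple of points from distinct $E$-classes whose sections ``entangle'' in a way that prevents a consistent invariant choice. The design should ensure that a Borel $E$-invariant uniformization of $P$ exists if and only if $\bfh$ has a Borel $E$-invariant ``independent selector'' in the sense of the $(G_0,H_0)$ dichotomy.

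Now apply the $\aleph_0$-dimensional $(G_0,H_0)$ dichotomy to the pair $(\bfh, E)$. In the positive alternative one obtains a Borel $E$-invariant function $X\to \bbN$ compatible with the selection constraints, which combined with the $f_n$ yields an $E$-invariant Borel uniformization, giving (1). In the negative alternative one obtains a continuous reduction of the canonical $\aleph_0$-dimensional obstruction to $\bfh$; because $E_0$ is the equivalence relation generated by $G_0$, this reduction upgrades to a continuous embedding $\pi_X\colon 2^\bbN\times\bbN\to X$ of $E_0\times I_\bbN$ into $E$. One then defines $\pi_Y$ on each $E_0\times I_\bbN$-class by letting the $\bbN$-coordinate enumerate the section $P_{\pi_X(x)}$ via the appropriate $f_n\bigl(\pi_X(x)\bigr)$.

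The principal obstacle is producing $\pi_X$ and $\pi_Y$ satisfying both conjoined conditions in (2) at once: the sharp section-disjointness $P_{\pi_X(x)}\cap P_{\pi_X(x')}=\emptyset$ for $x\not\mathrel{E_0\times I_\bbN}x'$, together with the exact equality $P_{\pi_X(x)}=\pi_Y([x]_{E_0\times I_\bbN})$. Neither condition is automatic from the abstract dichotomy: the first requires thinning the embedding so that distinct classes do not share section values, and the second requires that the $\bbN$-coordinate exhaust the (possibly infinite) section rather than only injecting into it. The plan is to incorporate this bookkeeping into the Cantor-scheme construction underlying the $(G_0,H_0)$-dichotomy proof, interleaving the combinatorial requirements with a diagonal enumeration of the $f_n$ and a pruning step that eliminates cross-class collisions, much as in Miller's original argument for \cref{thm:ben-dichotomy}.
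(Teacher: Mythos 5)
Your outline follows the paper's second proof in broad strokes (Lusin--Novikov decomposition, an auxiliary $\aleph_0$-dimensional hypergraph, the $(G_0,H_0)$-type dichotomy, case split), and your mutual-exclusivity argument is fine. But the substantive direction has two genuine gaps. First, the positive case. The correct hypergraph is $G((x_n)) \iff \bigcap_n P_{x_n} = \emptyset$ (your ``entanglement'' description never pins this down, and the points need not lie in distinct $E$-classes), and the positive alternative of \cref{thm:countable-dimensional-dichotomy} does \emph{not} hand you ``a Borel $E$-invariant function $X \to \bbN$ compatible with the selection constraints.'' It gives only a countable Borel colouring of $G$ that is local to some smooth $F \supseteq E$; after refining, what you know is that $\bigcap_{x \in C} P_x \neq \emptyset$ for each $F$-class $C$. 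Getting from there to a Borel $F$-invariant uniformization is a theorem in its own right (\cref{thm:unif-from-smooth}): the class-wise intersection, viewed as a subset of $Z \times Y$ for the smooth quotient $Z$, is only $\bm{\Pi^1_1}$, so you cannot simply take ``the least $n$ with $f_n(x)$ in the intersection'' in a Borel way. The paper's proof goes through the number uniformization property for $\bm{\Pi^1_1}$, two applications of Lusin separation, and Lusin--Novikov on the quotient. None of this is in your proposal, and without it the positive case does not close.

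Second, the negative case. From a continuous homomorphism of $(G_0^\omega, H_0^\omega)$ into $(G, E)$ you still need a Baire-category argument (meagerness of $G_0^\omega$-independent sets plus Kuratowski--Ulam applied to the pullback of the relation $P_x \cap P_{x'} = \emptyset$) to extract an embedding $\pi_X$ of $E_0$ into $E$ with the disjointness property alone --- this is condition (4) of \cref{prop:dichotomy-equivalent-conditions}. You correctly identify that upgrading to the full conclusion (2), with both disjointness and the exact equality $P_{\pi_X(x)} = \pi_Y([x]_{E_0 \times I_\bbN})$, is the principal remaining obstacle, but your plan to ``incorporate the bookkeeping into the Cantor scheme'' is not executed and is not how the paper proceeds: the paper isolates this upgrade as a separate equivalence ((4) $\Rightarrow$ (3) $\Rightarrow$ (2) in \cref{prop:dichotomy-equivalent-conditions}), using Lusin--Novikov to make a uniformization injective on a non-meager set, then the compressibility of the countable Borel equivalence relation generated by the sections to convert an embedding into an invariant one, and a final category argument to restore continuity. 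As written, your proposal asserts the two hardest steps rather than proving them.
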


We provide in \cref{sec:proof-of-ben-1} a different proof of this dichotomy, using Miller's $(\G_0, \bbH_0)$ dichotomy \cite{Mi-survey} and Lecomte's $\aleph_0$-dimensional hypergraph dichotomy \cite{L}. Our proof relies on the following strengthening of $(i) \implies (v)$ of \cref{thm:equivalence-uniformization-smooth}, which is interesting in its own right:

\begin{thm}\label{thm:unif-from-smooth}
	Let $F$ be a smooth Borel equivalence relation on a Polish space $X$, $Y$ be a Polish space, and $P \subseteq X \times Y$ be a Borel set with countable sections. Suppose that
	\[\bigcap_{x \in C} P_x \neq \emptyset\]
	for every $F$-class $C$. Then $P$ admits a Borel $F$-invariant uniformization.
\end{thm}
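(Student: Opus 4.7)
Since $F$ is smooth, fix a Polish space $Z$ and a Borel map $S \colon X \to Z$ with $x F x' \iff S(x) = S(x')$. The plan is to replace $P$ by its $F$-invariant ``core'' (whose sections remain countable and nonempty), push it down to $Z$ along $S$, and then uniformize on $Z$ by Luzin--Novikov.

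First, define
\[
	\tilde{P} = \{(x,y) \in X \times Y : \forall x'\,(x F x' \implies (x',y) \in P)\}.
\]
Then $\tilde{P}_x = \bigcap_{x' \in [x]_F} P_{x'}$ is nonempty by hypothesis, and is contained in $P_x$ (taking $x' = x$), so each section is countable; moreover $\tilde{P}$ is $F$-invariant. The defining formula is $\bm{\Pi}^1_1$, and a $\bm{\Pi}^1_1$ set with countable sections is Borel (a classical consequence of the Luzin--Novikov uniformization theorem), so $\tilde{P}$ is Borel.

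Next, push $\tilde{P}$ down to $Z$ by setting
\[
	Q = \{(z,y) \in Z \times Y : \exists x\,(S(x) = z \,\wedge\, (x,y) \in \tilde{P})\}.
\]
By $F$-invariance of $\tilde{P}$, $Q_z = \tilde{P}_x$ whenever $S(x) = z$, and $Q_z = \emptyset$ for $z \notin S(X)$; in particular $Q$ is analytic with countable sections, hence Borel. Apply Luzin--Novikov on $Q$ to write $Q = \bigcup_n \mathrm{graph}(g_n)$ with $g_n \colon E_n \to Y$ Borel and $E_n \subseteq Z$ Borel. Set $n(z) = \min\{n : z \in E_n\}$, which is defined throughout $S(X)$, and let $g(z) = g_{n(z)}(z)$. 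Finally put $f(x) = g(S(x))$: this is Borel, $F$-invariant since $x F x' \Rightarrow S(x) = S(x')$, and $f(x) \in Q_{S(x)} = \tilde{P}_x \subseteq P_x$, as required.

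The main step requiring care is the Borelness of $\tilde{P}$: although it is naturally only $\bm{\Pi}^1_1$, the countability of its sections promotes it to Borel. Once this is established, the rest is a routine descent along $S$ followed by a standard Borel selection, and notably one does not need $S(X)$ to be Borel.
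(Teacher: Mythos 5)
Your overall architecture (pass to the $F$-invariant core, push down along $S$, finish with Luzin--Novikov) has the right shape, but the step you yourself flag as ``the main step requiring care'' is false, and it is exactly where the content of the theorem lies. There is no classical theorem that a $\bm{\Pi^1_1}$ set with countable sections is Borel: if $C$ is $\bm{\Pi^1_1}$ and non-Borel, then $(C \times \{y_0\}) \cup (X \times \{y_1\})$ is $\bm{\Pi^1_1}$ with non-empty countable sections and full projection, yet is not Borel. Moreover your specific set $\tilde P$ can genuinely fail to be Borel. Take $X = \bbN^\bbN \times \bbN^\bbN$ with the smooth relation $(z,w) \mathrel{F} (z',w') \iff z = z'$, let $W \subseteq X$ be closed with $\operatorname{proj}_{\bbN^\bbN}(W)$ analytic non-Borel, let $Y = 2^\bbN$ with two fixed points $y_0 \neq y_1$, and set $P = (X \times \{y_1\}) \cup ((X \setminus W) \times \{y_0\})$. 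This $P$ is Borel with non-empty countable sections and every class-intersection contains $y_1$, but $y_0 \in \tilde P_{(z,w)}$ iff $z \notin \operatorname{proj}_{\bbN^\bbN}(W)$, so $\tilde P$ is not Borel. Your second Borelness claim --- that an analytic set with countable sections is Borel --- fails just as badly (consider $C \times \{y_0\}$ with $C$ analytic non-Borel), so the descent step would also need repair even if $\tilde P$ were Borel.

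The paper's proof is built precisely to avoid these claims. It keeps the invariant core $P^*$ on $Z \times Y$ as a $\bm{\Pi^1_1}$ set, but never asserts it is Borel. Instead it first applies Luzin--Novikov to the \emph{original} Borel set $P$ to write $P = \bigcup_n \operatorname{graph}(g_n)$, notes that $\{(x,n) : P^*(S(x), g_n(x))\}$ is $\bm{\Pi^1_1}$ with non-empty sections, and uses the number uniformization property for $\bm{\Pi^1_1}$ to get a Borel index selection $h$. The analytic set $A(z,y) \iff \exists x\,(S(x)=z \wedge y = g_{h(x)}(x))$ is contained in $P^*$, so Luzin separation yields a Borel set $A \subseteq P^{**} \subseteq P^*$; one more separation restricts to a Borel set of $z$ on which $P^{**}_z$ is countable, and only then does Luzin--Novikov apply. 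To fix your argument you need this selection-plus-separation mechanism (or an equivalent reflection argument) to manufacture a Borel set squeezed between an analytic selection and the co-analytic core; the final composition with $S$ is then fine as you wrote it.
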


In \cref{sec:aleph-0-dim-g0-h0} we prove an $\aleph_0$-dimensional $(\G_0, \bbH_0)$-type dichotomy, generalizing Lecomte's dichotomy in the same way that the $(\G_0, \bbH_0)$ dichotomy generalizes the $\G_0$ dichotomy, and use this to give still another proof of \cref{thm:ben-dichotomy} in \cref{sec:proof-of-ben-2}.

In the case of countable uniformization, the Lusin--Novikov Theorem asserts that $P$ can be covered by the graphs of countably-many Borel functions. When $E$ is smooth, the proof of \cref{thm:equivalence-uniformization-smooth} gives an invariant analogue of this fact (see \cref{thm:smooth-lusin-novikov}). De Rancourt and Miller \cite{dRM} have shown that $\E_0$ is essentially the only obstruction to invariant Lusin--Novikov:

\begin{thm}[{\cite[Theorem~4.11]{dRM}}]\label{thm:invariant-lusin-novikov}
	Let $X, Y$ be Polish spaces, $E$ a Borel equivalence relation on $X$ and $P \subseteq X \times Y$ an $E$-invariant Borel relation with countable non-empty sections. Then exactly one of the following holds:

	(1) There is a sequence $g_n: X \to Y$ of Borel $E$-invariant uniformizations with $P = \bigcup_n \graph(g_n)$.

	(2) There is a continuous embedding $\pi_X: 2^\N \to X$ of $\E_0$ into $E$ and a continuous injection $\pi_Y: 2^\N \to Y$ such that for all $x \in 2^\N$, $P(\pi_X(x), \pi_Y(x))$.
\end{thm}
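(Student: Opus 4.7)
The plan is to establish mutual exclusion of (1) and (2), then show $\lnot(2) \implies (1)$ by combining Miller's \cref{thm:ben-dichotomy} with a graph-coloring dichotomy tailored to the setup.

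\emph{Mutual exclusion.} Suppose both (1) and (2) hold. For each $x \in 2^\bbN$, $\pi_Y(x) \in P_{\pi_X(x)} = \bigcup_n \{g_n(\pi_X(x))\}$, so the Borel sets $B_n = \{x : g_n(\pi_X(x)) = \pi_Y(x)\}$ cover $2^\bbN$. Since $g_n$ is $E$-invariant and $\pi_X$ is an $E_0$-to-$E$ homomorphism, $g_n \circ \pi_X$ is $E_0$-invariant, and combined with the injectivity of $\pi_Y$ this forces each $B_n$ to meet every $E_0$-class in at most one point. Hence $B_n$ is a partial Borel transversal of $E_0$, making $E_0 \upharpoonright [B_n]_{E_0}$ smooth, and gluing the resulting selectors along the covering $2^\bbN = \bigcup_n [B_n]_{E_0}$ yields a Borel selector for $E_0$ -- contradicting its non-smoothness.

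\emph{Main direction.} Assume (2) fails. I first claim the following \emph{triviality}: every $E$-invariant Borel $R \subseteq P$ with countable nonempty sections admits a Borel $E$-invariant uniformization. Indeed, \cref{thm:ben-dichotomy} applied to $R$ yields either such a uniformization, or a continuous embedding $\pi_X \colon 2^\bbN \times \bbN \to X$ of $E_0 \times I_\bbN$ into $E$ together with a continuous injection $\pi_Y$ satisfying $\pi_Y(z) \in R_{\pi_X(z)}$; restricting to $2^\bbN \times \{0\}$ and using $R \subseteq P$ reproduces exactly the witnesses to alternative (2) of our theorem, contradicting our standing assumption.

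To convert triviality into the countable covering demanded by (1), I would apply a coloring dichotomy to the Borel graph $H$ on $P$ defined by $((x,y),(x',y')) \in H \iff x E x' \wedge y \ne y'$, considered relative to the Borel sub-equivalence relation $E^\to$ on $P$ given by $(x,y) \mathrel{E^\to} (x',y') \iff x E x' \wedge y = y'$. A Borel $E^\to$-invariant $\bbN$-coloring $c \colon P \to \bbN$ of $H$ yields (1) directly: for each $n$ the partial function $g_n(x) := $ the unique $y \in P_x$ with $c(x,y) = n$ is Borel and $E$-invariant (by $E^\to$-invariance of $c$), and $\bigcup_n \mathop{\rm graph}(g_n) = P$. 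For the dichotomy I would use the paper's $\aleph_0$-dimensional $(G_0, H_0)$ dichotomy, with $H_0$ playing the role of $E^\to$: its second alternative produces a continuous homomorphism from $(G_0, H_0)$ into $(H, E^\to)$ whose coordinate projections to $X$ and $Y$ furnish a continuous embedding of $E_0$ into $E$ and a continuous injection into $Y$ with $\pi_Y(x) \in P_{\pi_X(x)}$ -- witnesses of alternative (2), contradicting failure of (2).

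\emph{Main obstacle.} The crux is the $E^\to$-invariance of the coloring: the classical $G_0$ dichotomy of Kechris--Solecki--Todorcevic produces only a Borel coloring, not one that respects a further equivalence relation, which is exactly why an ``equivalence-enriched'' dichotomy like the paper's $\aleph_0$-dimensional $(G_0, H_0)$ dichotomy is essential here. Extracting a genuine embedding of $E_0$ (rather than merely a homomorphism of $G_0$) from the second alternative is a further subtlety, handled by standard thinning arguments using the non-smoothness of $E_0$ together with the triviality statement above to eliminate small pieces where injectivity might fail.
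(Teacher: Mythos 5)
Your mutual-exclusion argument and your ``triviality'' claim (that under $\lnot(2)$ every $E$-invariant Borel $R \subseteq P$ with countable nonempty sections admits a Borel $E$-invariant uniformization, via \cref{thm:ben-dichotomy}) are both correct, and your Case 2 is actually promising: since the homomorphism lands in $P$, the point $\pi_Y(x) \in P_{\pi_X(x)}$ comes for free, and the thinning you allude to can be carried out because $(f_X \times f_X)^{-1}(E)$ and $\ker f_Y$ are both meager (any non-meager Baire-measurable section of either would force $f_Y$ to be constant on a non-meager, hence non-$G_0$-independent, set, using countability of the sections of $P$). But the first alternative is where the proof breaks.

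The gap is your assertion that the dichotomy's first alternative delivers ``a Borel $E^\to$-invariant $\bbN$-coloring of $H$.'' No version of the $(G_0,H_0)$ dichotomy produces that: the first alternative of \cref{thm:countable-dimensional-dichotomy} (or of Miller's theorem) gives a smooth quasi-order $R' \supseteq E^\to$ on $P$ together with a countable Borel $\equiv_{R'}$-\emph{local} coloring of $H$, i.e.\ the color classes are $H$-independent only within each $\equiv_{R'}$-class. Saturating under $E^\to$ and refining, the most you extract is a smooth Borel equivalence relation $F' \supseteq E^\to$ on $P$ with $F' \cap H = \emptyset$. Now try to finish: letting $T$ reduce $F'$ to equality on $Z$, the map $T(x,\cdot)$ is injective on each $P_x$ and $E$-invariant in $x$, so you are left with the set $W = \{(x,z) : \exists y \in P_x\,(T(x,y) = z)\} \subseteq X \times Z$, an $E$-invariant Borel set with countable sections, and producing the countably many total $E$-invariant selectors $g_n$ for $P$ is exactly equivalent to producing an invariant Lusin--Novikov decomposition of $W$ --- the theorem you are proving. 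The quotient was taken on $P$, not on $X$, so the first coordinate never got collapsed and the argument is circular. The fix is the paper's choice of graph: put $G$ on $X$ itself by $x \mathrel{G} x' \iff P_x \neq P_{x'}$ and apply the dichotomy to $(G,E)$. Then Case 1 yields a smooth $F \supseteq E$ on $X$ with $P$ $F$-invariant, and the \emph{classical} Lusin--Novikov theorem applied on the smooth quotient $X/F$ (this is \cref{thm:smooth-lusin-novikov}(1)) produces the $g_n$; the price is that Case 2 then requires a genuine category argument (the paper's $Q(x,y) \iff R(x,y) \And \forall^* x' \lnot R(x',y)$ analysis) to locate the points $\pi_Y(x)$, which your graph-on-$P$ setup would have gotten for free. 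You also need a small patch even granting your coloring: the $g_n$ it defines are partial, and must be extended to total $E$-invariant uniformizations (e.g.\ by the triviality claim applied to $P$ itself) before they witness (1).
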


We provide a different proof of this theorem in \cref{sec:invariant-lusin-novikov}, directly from Miller's $(\G_0, \bbH_0)$ dichotomy.

\subsection{Anti-dichotomy results}

Our next result can be viewed as a sort of anti-dichotomy theorem for large-section invariant uniformizations (see also the discussion in \cite[Section~1]{TV}). Informally, dichotomies such as \cref{thm:ben-dichotomy} provide upper bounds on the complexity of the collection of Borel sets satisfying certain combinatorial properties. Thus, one method of showing that there is no analogous dichotomy is to provide lower bounds on the complexity of such sets.

In order to state this precisely, we first fix a ``nice'' parametrization of the Borel relations on $\N^\N$, i.e., a $\bm{\Pi^1_1}$ set $\bbD \subseteq 2^\N$ and a map $\bbD \ni d \mapsto \bbD_d$ such that each $\bbD_d \subseteq \N^\N \times \N^\N, d \in \bbD$ is Borel, each Borel set in $\N^\N \times \N^\N$ appears as some $\bbD_d$, and so that these satisfy some natural definability properties (cf. \cite[Section~5]{AK}).

Define now
\[\calP = \{(d, e) : \text{$\bbD_d$ is an equivalence relation on $\N^\N$ and $\bbD_e$ is $\bbD_d$-invariant}\},\]
and let $\calP^{unif}$ denote the set of pairs $(d, e) \in \calP$ for which $\bbD_e$ admits a $\bbD_d$-invariant uniformization. More generally, for any set $A$ of properties of sets $P \subseteq \N^\N \times \N^\N$, let $\calP_A$ (resp. $\calP_A^{unif}$) denote the set of pairs $(d, e)$ in $\calP$ (resp. $\calP^{unif}$) such that $\bbD_e$ satisfies all of the properties in $A$. Let $\calP_{ctble}$ (resp. $\calP_{ctble}^{unif}$) denote $\calP_A$ (resp. $\calP_A^{unif}$) for $A$ consisting of the property that $P$ has countable nonempty sections.

One can check that $\calP$ is $\bm{\Pi^1_1}$ and that $\calP^{unif}$ is $\bm{\Sigma^1_2}$. The same is true for $\calP_{ctble}$ and $\calP_{ctble}^{unif}$. In the latter case, however, the effective version of \cref{thm:ben-dichotomy} (see \cref{thm:effective-ben-dichotomy}) gives a better bound on the complexity:

\begin{prop}\label{prop:complexity-countable-sections}
	The set $\calP_{ctble}^{unif}$ is $\bm{\Pi^1_1}$.
\end{prop}

By contrast, in the case of large sections, we prove the following, where a set $B$ in a Polish space $X$ is called \tb{$\bm{\Sigma^1_2}$-complete} if it is $\bm{\Sigma^1_2}$, and for all zero-dimensional Polish spaces $Y$ and $\bm{\Sigma^1_2}$ sets $C \subseteq Y$ there is a continuous function $f: Y \to X$ such that $C = f^{-1}(B)$.

\begin{thm}\label{thm:complexity-large-sections}
	The set $\calP_A^{unif}$ is $\bm{\Sigma^1_2}$-complete, where $A$ is one of the following sets of properties of $P \subseteq \N^\N \times \N^\N$:
	\begin{enumerate}
		\item $P$ has non-meager sections;
		\item $P$ has non-meager $G_\delta$ sections;
		\item $P$ has non-meager sections and is $G_\delta$;
		\item $P$ has $\mu$-positive sections for some probability Borel measure $\mu$ on $\N^\N$;
		\item $P$ has $\mu$-positive $F_\sigma$ sections for some probability Borel measure $\mu$ on $\N^\N$;
		\item $P$ has $\mu$-positive sections for some probability Borel measure $\mu$ on $\N^\N$ and is $F_\sigma$.
	\end{enumerate}
	The same holds for comeager instead of non-meager, and $\mu$-conull instead of $\mu$-positive.

	In fact, there is a hyperfinite Borel equivalence relation $E$ with code $d \in \bbD$ such that for all such $A$ above, the set of $e \in \bbD$ such that $(d, e) \in \calP_A^{unif}$ is $\bm{\Sigma^1_2}$-complete.
\end{thm}

We prove \cref{prop:complexity-countable-sections,thm:complexity-large-sections} in \cref{sec:hardness-proofs}. On the other hand, the following is open:

\begin{prob}
	Is there an analogous dichotomy or anti-dichotomy result for the case where $P$ has $K_\sigma$ nonempty sections?
\end{prob}

While we do not know the answer to this problem, we note in \cref{sec:K-sigma-failure} that \cref{thm:unif-from-smooth} is false when the sections are only assumed to be $K_\sigma$:

\begin{prop}\label{prop:K-sigma-failure}
	There is a smooth countable Borel equivalence relation $F$ on $\N^\N$ and an open set $P \subseteq \N^\N \times 2^\N$ such that
	\[\bigcap_{x \in C} P_x \neq \emptyset\]
	for every $F$-class $C$, but which does not admit a Borel $F$-invariant uniformization.
\end{prop}

\subsection{Invariant countable uniformization}

We next consider a somewhat less strict notion of invariant uniformization, where instead of selecting a single point in each section we select a countable nonempty subset. More precisely, given Polish spaces $X,Y$, a Borel equivalence relation $E$ on $X$ and an $E$-invariant  Borel set $P\subseteq X\times Y$, with $\proj_X (P) =X$, a Borel \tb{$\boldsymbol{E}$-invariant countable uniformization} is a Borel function $f\colon X \to Y^\N$ such that $\forall x\in X \forall n\in \N ( (x, f(x)_n) \in P)$ and
\[x_1 E x_2 \implies \{f(x_1)_n\colon n\in \N\} = \{f(x_2)_n\colon n\in \N\}.\]
Equivalently, if for each Polish space $Y$, we denote by $\E^Y_{ctble}$ the equivalence relation on $Y^\N$ given by
\[
(x_n) \E^Y_{ctble} (y_n) \iff \{x_n\colon n\in \N\} = \{y_n\colon n\in \N\},
\]
then an $E$-invariant countable uniformization is a Borel homomorphism $f$ of $E$ to $\E^Y_{ctble}$ such that for each $x,n$, we have that $(x,f(x)_n)\in P$.

We say that $E$ satisfies \tb{measure (resp., category, $\boldsymbol{K_\sigma}$) countable invariant uniformization} if for every $Y, \mu, P$ as in the corresponding uniformization theorem above, if $P$ is moreover $E$-invariant, then it admits a Borel $E$-invariant countable uniformization.

Recall that a Borel equivalence relation $E$ on $X$ is \tb{reducible to countable} if there is a Polish space $Z$, a countable Borel equivalence relation $F$ on $Z$ and a Borel function $S\colon X \to Z$ such that $x_1  E x_2 \iff S(x_1) F S(x_2)$.

As in the proof below of \cref{thm:equivalence-uniformization-smooth}, part \tb{(A)}, one can see that if a Borel equivalence relation $E$ on $X$ is reducible to countable, then $E$ satisfies measure (resp. category, $K_\sigma$) countable invariant uniformization. We conjecture the following:

\begin{conj}\label{conj:countable-uniformization}
Let $E$ be a Borel equivalence relation on a Polish space $X$. Then the following are equivalent:

(a) $E$ is reducible to countable;

(b) $E$ satisfies measure countable invariant uniformization;

(c) $E$ satisfies category countable invariant uniformization;

(d) $E$ satisfies $K_\sigma$ countable invariant uniformization.
\end{conj}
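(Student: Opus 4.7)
The direction (a) $\Rightarrow$ (b), (c), (d) follows the template of the easy direction of \cref{thm:equivalence-uniformization-smooth}. Given a Borel reduction $S\colon X \to Z$ of $E$ to a countable Borel equivalence relation $F$ on $Z$, and an $E$-invariant Borel $P \subseteq X \times Y$ with sections of the required type, the plan is to first apply the corresponding non-invariant uniformization theorem (\cref{thm:measure-uniformization}, \cref{thm:category-uniformization}, or \cref{thm:K-sigma-uniformization}) to obtain a Borel uniformization $g\colon X \to Y$ of $P$. Then, using the Lusin--Novikov enumeration $F = \bigcup_n \mathop{\rm graph}(\phi_n)$ together with a Borel lifting to choose, for each $n$ and $x$, a point $x_n \in [x]_E$ with $S(x_n) = \phi_n(S(x))$, set $f(x)_n = g(x_n)$. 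The resulting set $\{f(x)_n\}$ is a countable subset of $P_x$ depending only on $[x]_E$.

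The converse is the substantive part of the conjecture and mirrors the proof of \cref{thm:equivalence-uniformization-smooth}: combine a structural dichotomy with a counterexample for a canonical obstruction. The natural candidate obstruction is $E_1$, which by Kechris--Louveau is not reducible to countable. The strategy is then: (i) prove or assume an $E_1$-dichotomy stating that every Borel equivalence relation not reducible to countable continuously embeds $E_1$; (ii) reduce to $E_1$ via this embedding; (iii) for each of (b), (c), (d), construct an $E_1$-invariant Borel $P$ with sections of the required kind that admits no $E_1$-invariant countable uniformization. The key observation enabling (iii) is that any Borel $E_1$-invariant countable uniformization $f\colon X \to Y^\bbN$ induces a Borel $E_1$-invariant map $x \mapsto \{f(x)_n : n \in \bbN\}$ into the standard Borel space $[Y]^{\leq \aleph_0}$ of countable subsets of $Y$; if $P$ is engineered so that this induced map necessarily separates $E_1$-classes, then $E_1$ Borel reduces to a smooth equivalence relation (equality on $[Y]^{\leq \aleph_0}$), contradicting Kechris--Louveau. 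Such $P$ should exploit the density, meagerness and nullness of $E_1$-classes together with an ergodicity argument (for (b), (c)), or a direct Borel coding with $K_\sigma$ sections (for (d)).

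The principal obstacle is the hypothesized $E_1$-dichotomy, a long-standing open problem in descriptive set theory (a form of the Hjorth--Kechris conjecture). Without it, any Borel equivalence relation not reducible to countable into which $E_1$ does not continuously embed (should such exist) is unaccounted for by this plan. An alternative route avoiding the dichotomy would be to construct a counterexample directly from the negation of (a), perhaps by analyzing the canonical map $x \mapsto P_x$ and extracting from the failure of reducibility-to-countable an obstruction to any Borel $E$-invariant selection of $P_x$ as a countable subset; however, the non-reducibility hypothesis does not obviously supply the combinatorial data needed for such a direct construction, which is why I expect the conjecture to stand or fall with the $E_1$-dichotomy.
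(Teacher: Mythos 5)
You should first be clear that this statement is stated in the paper as an open \emph{conjecture}: the paper establishes only (a) $\implies$ (b), (c), (d) and some partial results toward the converse, so a complete proof is not expected and your proposal is rightly a plan rather than a proof. Even so, your forward direction has a genuine gap. The lifting step --- choosing $x_n \in [x]_E$ with $S(x_n) = \phi_n(S(x))$ --- fails for two reasons: $\phi_n(S(x))$ ranges over all of $[S(x)]_F$ and need not lie in $S(X)$ at all, so no such $x_n$ need exist; and even where preimages exist, selecting one in a Borel way is itself a uniformization problem over the arbitrary Borel fibers $S^{-1}(z)$, which Jankov--von Neumann solves only $\sigma(\bm{\Sigma^1_1})$-measurably. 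The paper goes in the opposite direction: it shows the class of relations satisfying (b)--(d) is closed downward under $\leq_B$ (\cref{lem:countable-uniformization-closed-reducibility}, proved like \cref{lem:uniformization-closed-reducibility} by pushing $P$ down to a Borel invariant $Q$ on the target space via the $\bm{\Pi^1_1}$ coinvariant set $P^*(z,y) \iff \forall x\,(S(x)=z \implies P(x,y))$ together with reflection and separation), and then observes that countable Borel equivalence relations trivially satisfy (b)--(d) by Lusin--Novikov applied on the target space.

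For the converse, the specific dichotomy you hypothesize is \emph{false}, not merely open: there are Borel equivalence relations not reducible to countable above which $E_1$ does not sit, e.g.\ $E_0^\bbN$ and the orbit equivalence relations of turbulent actions, since by Kechris--Louveau $E_1$ is not Borel reducible to any orbit equivalence relation of a Polish group action. The relevant open problem (quoted in the paper from Hjorth--Kechris) is a \emph{trichotomy} with three obstructions --- $E_1$, turbulence, and $E_0^\bbN$ --- and even granting it one must refute (b), (c), (d) for all three. The paper's partial results are: $E_1$ and $E_2$ fail (d), via the relation $P(x,y) \iff g(x)\mathrel{F}y$ for a reduction $g$ to a relation $F$ with $K_\sigma$ classes; $E_{ctble}$ (which lies above $E_0^\bbN$) fails (b) and (c), via Friedman's Borel anti-diagonalization theorem; whether $E_1$ fails (b) or (c) is itself posed as an open problem (\cref{prob:E_1-counterexample}), and nothing is known in the turbulent case. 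Finally, your closing argument misidentifies equality on the space of countable subsets of $Y$ as smooth: that relation is $E^Y_{ctble}$ (i.e.\ $=^+$), which is very far from smooth; the intended contradiction for $E_1$ can be salvaged only because $E_1 \not\leq_B E^Y_{ctble}$, and this is exactly the kind of argument the paper uses, but it does not generalize to the other obstructions.
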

We discuss some partial results in \cref{sec:countable-uniformization}.

\subsection{Further invariant uniformization results and smoothness}

We have so far considered the existence of Borel invariant uniformizations, generalizing the standard ``small section'' and ``large section'' uniformization theorems. One can also consider invariant analogues of uniformization theorems for more general pointclasses, such as the following:

\begin{thm}[Jankov, von Neumann uniformization {\cite[18.1]{CDST}}]
	Let $X, Y$ be Polish spaces and $P \subseteq X \times Y$ be a $\bm{\Sigma^1_1}$ set such that $P_x$ is non-empty for all $x \in X$. Then $P$ has a uniformization function which is $\sigma(\bm{\Sigma^1_1})$-measurable.
\end{thm}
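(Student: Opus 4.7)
The plan is the classical tree-and-leftmost-branch argument, with care given to definability.

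First, reduce to the case $Y = \bbN^\bbN$. Since every Polish space is a continuous image of Baire space, fix a continuous surjection $\phi : \bbN^\bbN \to Y$ and consider $P' = \{(x,z) \in X \times \bbN^\bbN : (x, \phi(z)) \in P\}$, which is $\bm{\Sigma^1_1}$ with non-empty sections. Any $\sigma(\bm{\Sigma^1_1})$-measurable uniformization $g$ of $P'$ yields $f = \phi \circ g$, which uniformizes $P$ and, since $\phi$ is continuous, is itself $\sigma(\bm{\Sigma^1_1})$-measurable. So assume from now on that $Y = \bbN^\bbN$, and write $P = \mathrm{proj}_{X \times \bbN^\bbN}(F)$ for some closed $F \subseteq X \times \bbN^\bbN \times \bbN^\bbN$. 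Using the standard bijection, view $F_x$ as the body $[T_x]$ of a tree $T_x \subseteq (\bbN \times \bbN)^{<\bbN}$ that is uniformly Borel in $x$; non-emptiness of $P_x$ then amounts to $T_x$ being ill-founded.

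Next, define $\sigma(x) \in (\bbN \times \bbN)^\bbN$ by recursion as the lexicographically leftmost branch of $T_x$, with respect to some fixed enumeration of $\bbN \times \bbN$: at stage $n$, let $\sigma(x)(n)$ be the least $a$ such that $T_x$ still has an infinite branch through $\res{\sigma(x)}{n}$ extended by $a$. K\"onig's lemma guarantees such an $a$ exists at each stage, so $\sigma(x) \in [T_x]$. Let $g(x)$ be the projection of $\sigma(x)$ onto its first coordinate; then $g : X \to \bbN^\bbN$ uniformizes $P$ by construction.

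The core step is verifying that $g$ is $\sigma(\bm{\Sigma^1_1})$-measurable. Let $B(x, s)$ denote the predicate ``$T_x$ has an infinite branch extending $s$''. This is $\bm{\Sigma^1_1}$ in $x$, since it unfolds as $\exists y \in \bbN^\bbN \, (s \subseteq y \wedge \forall n \, (\res{y}{n} \in T_x))$ and $T_x$ is Borel in $x$. An induction on $n$ then shows that $\{x : \res{\sigma(x)}{n} = s\}$ is a finite Boolean combination of sets $\{x : B(x, t)\}$ for various $t$, hence lies in $\sigma(\bm{\Sigma^1_1})$. Summing over the countably many possibilities for the second coordinates, $g^{-1}(N_t) \in \sigma(\bm{\Sigma^1_1})$ for each basic clopen $N_t \subseteq \bbN^\bbN$, giving the desired measurability. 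The only real subtlety lies in bookkeeping this measurability induction; there is no deeper obstacle, since the tree representation makes the leftmost-branch choice an explicit countable combination of analytic conditions on $x$.
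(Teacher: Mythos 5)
The paper does not prove this theorem---it is quoted verbatim from the cited reference \cite[18.1]{DST}---and your argument is exactly the standard proof given there: reduce to $Y = \bbN^\bbN$, unfold the analytic set to a closed set via a tree representation, take the leftmost branch, and check that the finitely many analytic conditions $B(x,t)$ determining each initial segment put the preimages of basic clopen sets in $\sigma(\bm{\Sigma^1_1})$. The argument is correct as written (two cosmetic points: the canonical tree of $F_x$ is a priori only analytic in $x$, though a uniformly Borel tree with the same body is easily extracted from an open cover of $F^c$, and in any case only the $\bm{\Sigma^1_1}$-ness of $B(x,s)\iff F_x\cap N_s\neq\emptyset$ is used; and the existence of the next coordinate at each stage is immediate from the existence of an infinite branch, with no appeal to K\"onig's lemma needed).
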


\begin{thm}[Novikov--Kond\^o uniformization {\cite[36.14]{CDST}}]
	Let $X, Y$ be Polish spaces and $P \subseteq X \times Y$ be a $\bm{\Pi^1_1}$ set such that $P_x$ is non-empty for all $x \in X$. Then $P$ has a uniformization function whose graph is $\bm{\Pi^1_1}$.
\end{thm}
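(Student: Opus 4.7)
The plan is to derive the Novikov-Kond\^o theorem from the scale property of $\bm{\Pi^1_1}$, which is the standard modern route. First I would reduce to the case $Y = \bbN^\bbN$, since every Polish space is a continuous injective image of a closed subset of $\bbN^\bbN$ and uniformizations transfer along such a representation. In this case $P$ can be realized as $\{(x,y) : T(x,y) \text{ is well-founded}\}$ for some (recursive, in an appropriate coding) tree operation $T$, so that for each $(x,y) \in P$ there is an associated ordinal rank $|T(x,y)| < \omega_1$.

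From this data I would build a $\bm{\Pi^1_1}$-scale $\{\phi_n\}_{n \in \bbN}$ on $P$. The base norm is $\phi_0(x,y) = |T(x,y)|$, which is a $\bm{\Pi^1_1}$-norm, with the associated prewellordering definable via comparison of ranks together with a well-foundedness test on comparison trees. The higher norms $\phi_n$ are constructed so as to encode, in addition, canonical information about the first $n$ coordinates of $y$ (for instance, a combined ordinal-plus-lexicographic ordering on pairs (rank, initial segment)), giving the required lower-semicontinuity: if $(x_i, y_i) \to (x, y)$ in $P$ and each sequence $\phi_n(x_i, y_i)$ is eventually constant equal to $\lambda_n$, then $(x,y) \in P$ and $\phi_n(x,y) \leq \lambda_n$ for every $n$.

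Given such a scale, I would define the uniformization by iteratively shrinking $P$: set $Q_0 = P$ and $Q_{n+1} = \{(x,y) \in Q_n : \phi_n(x,y) \leq \phi_n(x,y') \text{ for every } y' \text{ with } (x,y') \in Q_n\}$. The scale property guarantees that $P^* = \bigcap_n Q_n$ is the graph of a function $f\colon X \to Y$, defined on all of $X$ because every section $P_x$ is non-empty and each norm attains its minimum on $(Q_n)_x$ by the semicontinuity condition applied inside $P_x$. Furthermore $P^*$ is $\bm{\Pi^1_1}$: each $Q_n$ is $\bm{\Pi^1_1}$ because the relations $\leq^*_{\phi_n}$ and $<^*_{\phi_n}$ associated with a $\bm{\Pi^1_1}$-norm are themselves $\bm{\Pi^1_1}$, and a countable intersection of $\bm{\Pi^1_1}$ sets is $\bm{\Pi^1_1}$.

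The main obstacle is constructing the scale itself and verifying that each $\phi_n$ is genuinely a $\bm{\Pi^1_1}$-norm; this is where the combinatorial structure of well-founded trees and their ranks enters nontrivially, via comparison trees whose well-foundedness encodes the $\leq$-relation between ranks. Once the scale is in hand, the selection $\bigcap_n Q_n$ and the complexity computation of its graph are essentially formal and go through uniformly for any pointclass admitting scales.
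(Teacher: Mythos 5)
This is the classical Novikov--Kond\^o theorem, which the paper does not prove but simply cites as \cite[36.14]{DST}; there is no in-paper argument to compare against. Your outline is exactly the standard proof from that source (and Moschovakis 4E.3--4E.4): tree representation of $\bm{\Pi^1_1}$, a $\bm{\Pi^1_1}$-scale built from node ranks and coordinates of $y$ ordered lexicographically, and selection of the ``leftmost'' point via iterated norm-minimization. Two small points you should make explicit if you write this out: the norms must incorporate the ranks $|s_i|_{T(x,y)}$ of the individual nodes (not merely the overall rank $|T(x,y)|$ together with $y{\restriction}n$), since it is these that let you rank the limit tree and conclude $(x,y)\in P$ in the semicontinuity argument; and the inductive claim that each $Q_n$ is $\bm{\Pi^1_1}$ requires the scale to be \emph{very good} (or the more careful Moschovakis-style definition of $P^*$), because otherwise the definition of $Q_{n+1}$ quantifies over the complement of the $\bm{\Pi^1_1}$ set $Q_n$, which would push you out of $\bm{\Pi^1_1}$.
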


Let $E$ be a Borel equivalence relation on $X$. We say $E$ satisfies \textbf{Jankov--von Neumann (resp. Novikov--Kond\^o) invariant uniformization} if for every $Y, P$ as in the corresponding uniformization theorem above, if $P$ is moreover $E$-invariant, then it admits an $E$-invariant uniformization which is definable in the same sense as in the corresponding uniformization theorem.

The following characterization of those Borel equivalence relations that satisfy these properties essentially follows from the proof of \cref{thm:equivalence-uniformization-smooth}.

\begin{thm}
	Let $E$ be a Borel equivalence relation on a Polish space $X$. Then the following are equivalent:
	\begin{enumerate}[label=(\roman*)]
		\item $E$ is smooth;
		\item $E$ satisfies Jankov--von Neumann invariant uniformization;
		\item $E$ satisfies Novikov--Kond\^o invariant uniformization.
	\end{enumerate}
\end{thm}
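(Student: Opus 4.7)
The plan is to prove $(\text{i}) \Rightarrow (\text{ii}),(\text{iii})$ by pushing $P$ through a Borel reduction to equality, and $\neg(\text{i}) \Rightarrow \neg(\text{ii}),\neg(\text{iii})$ by observing that the counterexamples produced in the proof of \cref{thm:equivalence-uniformization-smooth} rule out all invariant uniformizations with the Baire property, a class that contains both $\sigma(\bm{\Sigma^1_1})$-measurable functions and the particular $\bm{\Pi^1_1}$-graph functions that arise here.

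For $(\text{i}) \Rightarrow (\text{ii})$, fix a Borel reduction $S\colon X\to Z$ of $E$ to equality on a Polish space $Z$. Given an analytic $E$-invariant $P\subseteq X\times Y$ with non-empty sections, the relation $P''(z,y)\iff \exists x\,(S(x)=z\land (x,y)\in P)$ is analytic in $Z\times Y$ with $P''_{S(x)}=P_x$ by $E$-invariance. Jankov-von Neumann supplies a $\sigma(\bm{\Sigma^1_1})$-measurable $h\colon Z\to Y$ uniformizing $P''$ on $S(X)$, and $f(x):=h(S(x))$ is the required $E$-invariant uniformization, remaining $\sigma(\bm{\Sigma^1_1})$-measurable because $S$ is Borel. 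For $(\text{i}) \Rightarrow (\text{iii})$, define instead $P''(z,y)\iff \forall x\,(S(x)=z\to (x,y)\in P)$, which is $\bm{\Pi^1_1}$ with non-empty sections on all of $Z$ (vacuously so outside $S(X)$). Novikov-Kond\^o produces $h\colon Z\to Y$ with $\bm{\Pi^1_1}$ graph, and $f=h\circ S$ inherits a $\bm{\Pi^1_1}$ graph since $\mathrm{graph}(f)=(S\times\mathrm{id}_Y)^{-1}(\mathrm{graph}(h))$.

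For the converse, assume $E$ is not smooth. The proof of \cref{thm:equivalence-uniformization-smooth} provides an $F_\sigma$ $E$-invariant $P\subseteq X\times Y$ with countable non-empty sections admitting no Borel $E$-invariant uniformization; the underlying Baire-category argument, which reduces the question to $E_0$ via the Harrington-Kechris-Louveau embedding and then applies the $E_0$-0-1 law, actually rules out any $E$-invariant uniformization with the Baire property. Every $\sigma(\bm{\Sigma^1_1})$-measurable function is BP-measurable, which settles $(\text{ii}) \Rightarrow (\text{i})$. For $(\text{iii}) \Rightarrow (\text{i})$, suppose $f$ is a uniformization of $P$ with $\bm{\Pi^1_1}$ graph $G\subseteq P$. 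Applying Lusin-Novikov to the Borel set $P$ with countable sections gives Borel $g_n\colon X\to Y$ with $P=\bigcup_n \mathrm{graph}(g_n)$, and $A_n:=\{x:(x,g_n(x))\in G\}$ is $\bm{\Pi^1_1}$ with $X=\bigcup_n A_n$. Hence $f^{-1}(V)=\bigcup_n (A_n\cap g_n^{-1}(V))$ is $\bm{\Pi^1_1}$ for each Borel $V$, so $f$ has the Baire property and the same obstruction applies.

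The main point to verify is that the Baire-category argument in the proof of \cref{thm:equivalence-uniformization-smooth} really does rule out every BP-measurable invariant uniformization, not just the Borel ones; this is a standard consequence of the fact that a BP-measurable $E_0$-invariant function on $2^\bbN$ is constant on a comeager set while the $E_0$-classes are meager, together with the continuity of the reduction $E_0\sqsubseteq_c E$ supplied by the Harrington-Kechris-Louveau dichotomy.
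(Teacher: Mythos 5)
Your forward direction is exactly the mechanism the paper intends: push $P$ through the reduction $S$ to equality, apply Jankov--von Neumann to the $\exists$-saturation and Novikov--Kond\^o to the $\forall$-saturation, and pull back along $S$. Your observation that a uniformization with $\bm{\Pi^1_1}$ graph of a Borel set with countable sections is $\sigma(\bm{\Sigma^1_1})$-measurable via Lusin--Novikov is also the right way to bring (iii) into the Baire-property framework (modulo the slip that $\bigcup_n(A_n\cap g_n^{-1}(V))$ is in $\sigma(\bm{\Pi^1_1})$, not $\bm{\Pi^1_1}$ --- a countable union of $\bm{\Pi^1_1}$ sets need not be $\bm{\Pi^1_1}$; all you need is that it has the Baire property, which it does).

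The gap is precisely in the step you flag as ``the main point to verify.'' The class of BP-measurable functions is \emph{not} closed under precomposition with a continuous (or Borel) map: if $\varphi\colon 2^\bbN\to X$ is the Harrington--Kechris--Louveau embedding and $g$ is BP-measurable, then $g\circ\varphi$ need not be BP-measurable, since $\varphi^{-1}$ of a meager set need not be meager (when $\varphi$ has nowhere dense range, \emph{every} subset of that range is meager in $X$, so $g$ can be completely pathological there while remaining BP-measurable). Consequently ``no BP-measurable $E$-invariant uniformization'' does not transfer from $E_0$ to a general non-smooth $E$, and it is in fact false in general: take $E$ non-smooth whose copy of $E_0$ lives on a closed nowhere dense invariant set off of which $E$ is equality; then one can use choice on that meager set (and Lusin--Novikov off it) to produce a BP-measurable $E$-invariant uniformization of the transferred counterexample $Q$ from \cref{lem:uniformization-closed-reducibility}. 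The repair uses only ingredients you already have: do not transfer the Baire property, transfer the definability class. A $\sigma(\bm{\Sigma^1_1})$-measurable (resp.\ $\bm{\Pi^1_1}$-graph) $E$-invariant uniformization of $Q$ composes with the Borel reduction to give a $\sigma(\bm{\Sigma^1_1})$-measurable (resp.\ $\bm{\Pi^1_1}$-graph) $E_0$-invariant selector on $2^\bbN$, since these classes \emph{are} closed under Borel precomposition; apply your Lusin--Novikov trick on $2^\bbN$ in the second case, and only then invoke generic ergodicity of $E_0$ together with the meagerness of its classes.
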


\subsection{Remarks on invariant uniformization over products}

One can consider more generally the question of invariant uniformization over products. Let $X, Y$ be Polish spaces, $E$ a Borel equivalence on $X$, $F$ a Borel equivalence on $Y$, and $P \subseteq X \times Y$ an $E \times F$-invariant set. In this case, one can ask whether there is an $E \times F$-invariant Borel set $U \subseteq P$ so that each section $U_x$ intersects one, or even finitely-many, $F$-classes. This paper then considers the special case where $F = \Delta_Y$ is equality.

In the case where $P$ has countable sections and $F$ is smooth, one can reduce this to the case where $F$ is equality to get analogues of \cref{thm:ben-dichotomy,thm:invariant-lusin-novikov}.

Miller \cite[Theorem~2.1]{Mi-dichotomy} has proved a generalization of \cref{thm:ben-dichotomy} where $P$ has countable sections and the equivalence classes of $F$ are countable, and de Rancourt and Miller \cite[Theorem~4.11]{dRM} have proved a generalization of \cref{thm:invariant-lusin-novikov} where the sections of $P$ are contained in countably many $F$-classes (but are not necessarily countable). See \cite[Chapter~4]{Wolman-thesis} for a survey of these results, as well as more dichotomies regarding invariant uniformizations over products.

The problem of invariant uniformization is also discussed in \cite{myers_invariant_1976,burgess_remarks_1975} where they consider the question of invariant uniformization over products when $E, F$ come from Polish group actions, and specifically when $E, F$ are the isomorphism relation on a class of structures. Myers \cite[Theorem~10]{myers_invariant_1976} gives an example in which there is no Baire-measurable invariant uniformization, so that in particular the invariant Jankov--von Neumann and invariant Novikov--Kond\^o uniformization don't hold.

\begin{rmk}
	We have defined an $E \times F$-invariant uniformizing \emph{set} for $P$ to be an $E \times F$-invariant set $U \subseteq P$ so that each section $U_x$ contains exactly one $F$-class. It is also natural to consider the (a priori stronger) notion of an $E \times F$-invariant uniformizing \emph{function} for $P$, i.e., a map $f: X \to Y$ whose graph is contained in $P$ and such that $x E x' \implies f(x) F f(x')$ for all $x, x' \in X$.

	De Rancourt and Miller have shown that these definitions coincide when $F$ is ``strongly idealistic'', which happens in particular when $E$ is countable or is generated by a Borel action of a Polish group (see the second paragraph of \cite{dRM} for a definition of strongly idealistic Borel equivalence relations, and \cite[Proposition~2.8]{dRM} for a proof that in this case these notions of uniformization are equivalent).

	It is unclear what one can say in terms of Borel $E \times F$-invariant uniformizing functions when $F$ is not strongly idealistic. For example, there are smooth Borel equivalence relations $E$ which do not admit a Borel selector (see e.g. \cite[Exercise~5.4.6]{Gao-IDST}), in which case $P = E$ does not admit a Borel $E \times F$-invariant uniformizing function, but does admit a Borel $E \times F$-invariant uniformization as defined above.
\end{rmk}

\paragraph{Acknowledgements.} Research partially supported by NSF Grant DMS-1950475. This paper appears as a chapter in the second author's Ph.D. thesis. We would like to thank Todor Tsankov who asked whether measure invariant uniformization holds for countable Borel equivalence relations. We would also like to thank Ben Miller, Andrew Marks and Dino Rossegger for useful comments and discussion. Finally, we thank the anonymous referee for many helpful comments and suggestions.

\section{Proof of \texorpdfstring{\cref{thm:equivalence-uniformization-smooth}}{Theorem~\ref*{thm:equivalence-uniformization-smooth}}}\label{sec:proof-equivalence-smooth}
\tb{(A)} We first show that (i) implies (ii), the proof that (i) implies (iii) being similar. Fix a Polish space $Z$ and a Borel function $S\colon X\to Z$ such that $x_1 E x_2 \iff S(x_1) = S(x_2)$. Fix also $Y,\mu, P$ as in the definition of measure invariant uniformization. Define $P^*\subseteq Z\times Y$ as follows:

\[
(z,y) \in P^* \iff \forall x\in X \big( S(x) = z \implies (x,y) \in P\big).
\]
Then $P^*$ is $\boldsymbol{\Pi^1_1}$ and we have that

\[
S(x) =z \implies P^*_z = P_x,
\]
\[
z\notin S(X) \implies P^*_z = Y.
\]
Thus $\forall z\in Z (\mu (P^*_z) > 0)$. Then, by \cite[36.24]{CDST}, there is a Borel function $f^*\colon Z \to Y$ such that $\forall z \in Z ( (z,f^* (z) ) \in P^*)$. Put
\[
f(x) = f^* (S(x)).
\]
Then $f$ is a Borel $E$-invariant uniformization of $P$.

We next prove that (i) implies (iv) (and therefore (v)). For this, we will need the following theorem of Hurewicz on the complexity of sections of a Borel set.

\begin{thm}[Hurewicz, see {\cite[35.47]{CDST}}]\label{thm:Hurewicz}
	Let $X, Y$ be Polish spaces. For any Borel set $B \subseteq X \times Y$,
	\[\{x \in X : B_x ~\text{is}~ K_\sigma\}\]
	is $\bm{\Pi^1_1}$, and the same holds when $K_\sigma$ is replaced with $F_\sigma$ or $G_\delta$.
\end{thm}

Fix $Z, S$ as in the previous case and $Y, P$ as in the definition of $K_\sigma$ invariant uniformization. Define $P^*$ as before. Then $A = \{(z,y)\colon \exists x \in X (S(x) = z \And P(x, y))\}$ is a $\boldsymbol{\Sigma^1_1}$ subset of $P^*$, so by the Lusin Separation Theorem there is a Borel subset $P^{**}$ of $P^*$ such that $A\subseteq P^{**}$. By \cref{thm:Hurewicz}, the set $C$ of all $z\in Z$ such that $P^{**}_z$ is $K_\sigma$ is $\boldsymbol{\Pi^1_1}$ and contains the $\boldsymbol{\Sigma^1_1}$ set $S(X)$, so by the Lusin Separation Theorem there is a Borel set $B$ with $A\subseteq B\subseteq C$. Then if $Q\subseteq Z\times Y$ is defined by
\[
(z,y) \in Q \iff z\in B \ \&  \ (z,y) \in P^{**},
\]
we have that
\[
S(x) =z \implies Q_z = P_x
\]
and every $Q_z$ is $K_\sigma$. It follows, by \cite[35.46]{CDST},  that $D = \proj_Z (Q)$ is Borel and there is a Borel function $g\colon D\to Y$ such that $\forall z\in D (z, g(z) )\in Q$. Since $f(X)\subseteq D$, the function

\[
f(x) = g (S(x))
\]
is a Borel $E$-invariant uniformization of $P$.

\medskip

\noindent\tb{(B)} We will next show that $\neg$(i) implies $\neg$(ii), $\neg$(iii), and $\neg$(v) (and thus also $\neg$(iv)). We will use the following lemma. Below for Borel equivalence relations $E,E'$ on Polish spaces $X,X'$, resp., we write $E\leq_B E'$ iff there is a Borel map $f\colon X\to X'$ such that $x_1 E x_2 \iff f(x_1) E' f(x_2)$, i.e., $E$ can be \tb{Borel reduced} to $E'$ (via the reduction $f$).

\begin{lem}\label{lem:uniformization-closed-reducibility}
Let $E,E'$ be  Borel equivalence relations on Polish spaces $X,X'$, resp., such that $E\leq_B E'$. If $E$ fails (ii) (resp., (iii), (iv), (v)), so does $E'$.
\end{lem}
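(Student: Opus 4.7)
The plan is to use the Borel reduction $f\colon X \to X'$ to transfer a counterexample $P \subseteq X \times Y$ for $E$ into a counterexample $P' \subseteq X' \times Y$ for $E'$. The universal design principle is that $P'_{x'} = P_x$ whenever $x' E' f(x)$: by $E$-invariance of $P$ this assignment is well-defined and $E'$-invariant on $[f(X)]_{E'}$, and any $E'$-invariant Borel uniformization $g$ of $P'$ pulls back along $f$ to the $E$-invariant Borel uniformization $x \mapsto g(f(x))$ of $P$, contradicting the assumed failure for $E$.

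The natural $E'$-invariant candidate is the $\bm{\Sigma^1_1}$ set
\[A := \{(x',y) \in X' \times Y : \exists x \in X,\ f(x) E' x' \land (x,y) \in P\},\]
whose sections are $P_x$ on $[f(X)]_{E'}$ and empty elsewhere, together with its $\bm{\Pi^1_1}$ dual
\[B := \{(x',y) : \forall x \in X,\ f(x) E' x' \implies (x,y) \in P\},\]
with sections $P_x$ on $[f(X)]_{E'}$ and $Y$ elsewhere. Clearly $A \subseteq B$, and both are $E'$-invariant. For cases (iv) and (v), I realize $A$ as the projection onto $X' \times Y$ of the Borel set $\{(x',y,x) : f(x) E' x' \land P(x,y)\}$, whose $(x',y)$-sections lie within a single $E$-class and are thus countable in the intended application $E = E_0$; the Arsenin--Kunugui projection theorem then gives that $A$ is Borel, and an analogous argument shows that $[f(X)]_{E'} = \operatorname{proj}_{X'}(A)$ is Borel. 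I then set $P' := A \cup \bigl((X' \setminus [f(X)]_{E'}) \times \{y_0\}\bigr)$ for a fixed $y_0 \in Y$, which is Borel, $E'$-invariant, and has non-empty countable (resp.\ $K_\sigma$) sections.

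For the large-section cases (ii) and (iii), $A$ need not be Borel. Instead I apply Lusin separation to the pair $A \subseteq B$ to produce a Borel $C$ with $A \subseteq C \subseteq B$, and combine the standard ``Borel subset with large sections'' principle for $\bm{\Pi^1_1}$ sets (underlying the proof of \cite[36.24]{DST} and its category analogue) with an effective (Gandy--Harrington) reflection argument relative to codes for $E', f, P$ to extract $P' \subseteq B$ that is simultaneously Borel, $E'$-invariant, and has non-meager (resp.\ $\mu$-positive) sections. The uniformization transfer then concludes as in the previous case.

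The main obstacle is this last simultaneous arrangement: producing a Borel and $E'$-invariant subset of $B$ with the required section size when $E'$ is itself non-smooth, which requires an invariant form of the usual separation/reflection principles. By contrast, the $K_\sigma$-section cases are more direct, since once one observes that the relevant $E$-classes are countable in the intended application, the set $A$ itself is Borel via a projection theorem, and no invariant separation is needed.
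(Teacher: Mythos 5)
Your overall strategy is the same as the paper's: push the counterexample forward along the reduction $f$ by sandwiching between the $\bm{\Sigma^1_1}$ set $A$ and the $\bm{\Pi^1_1}$ set $B$ (the paper's $P''$ and $P'$), and pull any invariant uniformization back via $g\circ f$. But in the large-section cases (ii) and (iii) you have explicitly left open the one step that carries all the technical content: producing a \emph{Borel, $E'\times\Delta_Y$-invariant} set $P'''$ with $A\subseteq P'''\subseteq B$. Plain Lusin separation gives a Borel $C$ between $A$ and $B$ with no invariance, and your appeal to ``an effective (Gandy--Harrington) reflection argument'' is not an argument. The paper's tool is Solovay's theorem on invariant $\bm{\Pi^1_1}$-ranks: since $B$ is $\bm{\Pi^1_1}$ and $E'\times\Delta_Y$-invariant, it carries an invariant $\bm{\Pi^1_1}$-rank $\varphi$, and by boundedness the $\bm{\Sigma^1_1}$ subset $A$ lies in some Borel invariant initial segment $\{\varphi\le\alpha\}\subseteq B$. (Alternatively, iterate the first reflection theorem: separate $A\subseteq C_0\subseteq B$, note $[C_0]_{E'\times\Delta_Y}$ is $\bm{\Sigma^1_1}$ and still contained in $B$ by invariance, separate again, and take the increasing union.) One of these must be supplied. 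You also omit the final patch: $P'''$ may have small sections off $[f(X)]_{E'}$, so one must pass to the Borel invariant set $Z=\{x':\mu(P'''_{x'})>0\}\supseteq[f(X)]_{E'}$ and replace the sections outside $Z$ by $Y$.

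Your treatment of (iv) and (v) is a genuinely different and more elementary route, but it proves less than the lemma states. The fibers $\{x: f(x)\mathrel{E'}x'\}$ are full $E$-classes, and the lemma assumes nothing about $E$ beyond $E\le_B E'$; Arsenin--Kunugui/Lusin--Novikov make $A$ and $[f(X)]_{E'}$ Borel only when those classes are $K_\sigma$ (e.g.\ when $E=E_0$, the only case the paper actually applies). For general $E$ the paper again uses the invariant-rank sandwich, then observes that $\{x': P'''_{x'}\text{ is nonempty and }K_\sigma\}$ (resp.\ countable) is $\bm{\Pi^1_1}$, invariant, and contains $[f(X)]_{E'}$, and separates once more. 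So either restrict the lemma to the application (and say so) or route (iv)/(v) through the same invariant separation as (ii)/(iii).
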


\begin{proof}
Let $f\colon X\to X'$ be a Borel reduction of $E$ into $E'$. Assume first that $E$ fails (ii) with witness $Y,
\mu, P$. Define $P'\subseteq X'\times Y$ by
\[
 (x', y)\in P' \iff \forall x\in X\Big(f(x) E' x'
 \implies  (x, y)\in P \Big).
\]
Then note that
\[
f(x) E' x' \implies P'_{x'} = P_x,
\]
\[
x'\notin [f(X)]_{E'} \implies P'_{x'} = Y.
\]
Clearly $P'$ is $\boldsymbol{\Pi^1_1}$ and invariant under the Borel equivalence relation $E'\times \Delta_Y$.

Consider now the $\boldsymbol{\Sigma^1_1}$ subset $P''$ of $P'$ defined by
\[
(x',y)\in P'' \iff \exists x\in X \Big( f(x) E' x' \ \& \ (x,y)\in P\Big).
\]
By the Lusin Separation Theorem, fix a Borel set $P'''_0$ satisfying $P'' \subseteq P'''_0 \subseteq P'$ and recursively set $P'''_{n+1}$ to be a Borel set satisfying $[P'''_n]_{E' \times \Delta_Y} \subseteq P'''_{n+1} \subseteq P'$. Then $P''' = \bigcup_n P'''_n$ is a Borel $E' \times \Delta_Y$-invariant set satisfying $P'' \subseteq P''' \subseteq P'$.

Let now $Z\subseteq X'$ be defined by
\[
x'\in Z \iff \mu (P^{'''}_{x'}) >0.
\]
Then $Z$ is Borel and $E'$-invariant and contains $[f(X)]_{E'}$. Finally, define $Q\subseteq X'\times Y$ by
\[
(x', y) \in Q \iff \big( x'\in Z \ \& \ (x', y) \in P'''\big) \  \textrm{or} \ x'\notin Z.
\]
Then $f(x) = x'\implies Q_{x'} = P_x$, so $Y,\mu, Q$ witnesses the failure of (ii) for $E'$.

The case of (iii) is similar and we next consider the case of (iv). Repeat then the previous argument for case (ii) until the definition of $P'''$. Then define $Z'\subseteq X'$ by
\[
x'\in Z' \iff P^{'''}_{x'} \ \textrm{is} \ K_\sigma \ \textrm{and nonempty}.
\]
We claim that $Z'$ is $\bm{\Pi^1_1}$. This follows from \cref{thm:Hurewicz}, the fact that every nonempty (lightface) $\Delta^1_1$ $K_\sigma$ set contains a $\Delta^1_1$ element, and the theorem on restricted quantification for $\Delta^1_1$:

\begin{thm}[See {\cite[4F.15]{Mo}}]\label{thm:Ksigma-delta11-point}
	Let $K \subseteq \R^\N$ be a nonempty (lightface) $\Delta^1_1(r)$ $K_\sigma$ set, for some $r \in \R^\N$. Then $K \cap \Delta^1_1(r) \neq \emptyset$.
\end{thm}

\begin{thm}[Kleene, see {\cite[4D.3]{Mo}}]\label{thm:restricted-quantification}
	If $Q \subseteq (\R^\N)^3$ is $\Pi^1_1(r)$, for some $r \in \R^\N$, and
	\[R(x, z) \iff \exists y \in \Delta^1_1(r, z) (Q(x, z, y)),\]
	then $R$ is $\Pi^1_1(r)$.
\end{thm}

By \cite[4.17]{CDST} we may assume that $P'''$ is a Borel subset of $\R^\N \times \R^\N$, and hence is $\Delta^1_1(r)$ for some $r \in \R^\N$. By \cref{thm:Ksigma-delta11-point} this gives
\begin{align*}
	x' \in Z' & \iff P^{'''}_{x'} \ \textrm{is} \ K_\sigma \ \textrm{and nonempty} \\
	& \iff P^{'''}_{x'} \ \textrm{is} \ K_\sigma \ \textrm{and} \ \exists y \in \Delta^1_1(r, x') (y \in P'''_{x'}),
\end{align*}
which is $\bm{\Pi^1_1}$ by \cref{thm:Hurewicz,thm:restricted-quantification}.

Now $Z'$ is $\bm{\Pi^1_1}$, $E'$-invariant, and contains $[f(X)]_{E'}$. Let then $Z$ be $E'$-invariant Borel with $[f(X)]_{E'}\subseteq Z\subseteq Z'$, which can be done via the Lusin Separation Theorem as with the construction of $P'''$, and define $Q$ as before but replacing ``$x'\notin Z$'' by ``$(x' \notin Z \ \textrm{and} \ y=y_0)$'', for some fixed $y_0\in Y$. Then $Y, Q$ witnesses the failure of (iv) for $E'$.

Finally, the case of (v) is similar to (iv) by now defining
\[
x'\in Z' \iff P^{'''}_{x'} \ \textrm{is countable and nonempty},
\]
which is $\bm{\Pi^1_1}$ by \cref{thm:Ksigma-delta11-point} and \cite[29.19]{CDST}.
\end{proof}

Assume now that $E$ is not smooth. Then by \cite{HKL} we have $\E_0\leq_B E$. Thus by \cref{lem:uniformization-closed-reducibility} it is enough to show that $\E_0$ fails (ii), (iii), and (v) (thus also (iv)).

We first prove that $\E_0$ fails (ii).  We view here $2^\N$ as the Cantor group $(\Z/2\Z)^\N$ with pointwise addition $+$ and we let $\mu$ be the Haar measure, i.e., the usual product measure. Let then $A\subseteq (\Z/2\Z)^\N$ be an $F_\sigma$ set which has $\mu$-measure 1 but is meager. Let $X=Y = (\Z/2\Z)^\N$ and define $P\subseteq X\times Y$ as follows:
\[
(x,y)\in P \iff \exists x' \E_0 x (x' + y \in A).
\]
Clearly $P$ is $F_\sigma$ and, since $P_x = \bigcup_{x' \E_0 x} (A-x')$, clearly $\mu(P_x) = 1$. Moreover $P$ is $\E_0$-invariant. Assume then, towards a contradiction that $f$ is a Borel $\E_0$-invariant uniformization. Since $x \E_0 x' \implies f(x) = f(x')$, by generic ergodicity of $\E_0$ there is a comeager Borel $\E_0$-invariant set $C\subseteq X$ and $y_0$ such that $\forall x \in C (f(x) = y_0)$; thus $\forall x\in C (x,y_0) \in P$, so $\forall x\in C \exists x' \E_0 x (x' \in A-y_0)$. If $G\subseteq (\Z/2\Z)^\N$ is the subgroup consisting of the eventually 0 sequences, then $x\E_0 y \iff \exists g\in G (g+x = y)$; thus $C=\bigcup_{g\in G} (g + (A-y_0))$, so $C$ is meager, a contradiction.

To show that $\E_0$ fails (v), define
\[
(x,y) \in P \iff x \E_0 y.
\]
Then any Borel $\E_0$-invariant uniformization of $P$ gives a Borel selector for $\E_0$, a contradiction.

Finally to see that $\E_0$ fails (iii), use above $B = (\Z/2\Z)^\N\setminus A$, instead of $A$, to produce a $G_\delta$ set $Q$ as follows:
\[
(x,y)\in Q \iff \forall x' \E_0 x (x' + y \in B).
\]
Then $Q$ is $\E_0$-invariant and has comeager sections. If $g$ is a Borel $\E_0$-invariant uniformization, then by the ergodicity of $\E_0$, there is a $\mu$-measure 1 set $D$ and $y_0$ such that $\forall x\in D \forall x' \E_0 x (x' \in B-y_0)$, so $D\subseteq B-y_0$, and thus $\mu (D) = 0$, a contradiction.

This completes the proof of \cref{thm:equivalence-uniformization-smooth}.

\begin{rmk}
	Andrew Marks and Dino Rossegger have pointed out that the construction in \cref{prop:Ectbl} actually gives a strengthening of \cref{thm:equivalence-uniformization-smooth}: If $E$ is not smooth then $E$ fails co-countable invariant uniformization, i.e., there is an $E$-invariant Borel set $P$ whose sections are co-countable which does not admit a Borel $E$-invariant uniformization.

	To see this, define a hyperfinite Borel equivalence relation $E$ on $(2^\N)^\N$ by $x E y$ iff there is a permutation $\sigma$ of $\N$, fixing all but finitely many numbers, so that $x_n = y_{\sigma(n)}$ for $n \in \N$. H. Friedman has shown the following strengthening of \cref{thm:friedman} for this equivalence relation \cite[Proposition~C]{Friedman}: If $F: (2^\N)^\N \to 2^\N$ is Borel and $E$-invariant, then there is some $x \in (2^\N)^\N$ such that $F(x) = x_0$.

	Let now $P$ be as in the proof of \cref{prop:Ectbl}. Then $P$ is Borel, $E$-invariant, and has co-countable sections. If $F$ were a Borel $E$-invariant uniformization of $P$, then there would be some $x$ with $F(x) = x_0$. But by definition $\lnot P(x, x_0)$, a contradiction.

	Let now $E'$ be a non-smooth Borel equivalence relation on $X$. By \cite{HKL,DJK}, if $E'$ is not smooth then there is a Borel reduction $f$ from $E$ to $E'$. Define $P'$ as in the proof of \cref{lem:uniformization-closed-reducibility}. Then $P'$ has co-countable sections and does not admit a Borel $E'$-invariant uniformization, so it remains to check that $P'$ is Borel. To see this, write
	\[Q(x', y, x) \iff f(x) E' x' \And \lnot P(x, y).\]
	Then $Q$ is Borel and its sections $Q_{(x', y)}$ are either an $E$-class or empty, hence countable. Thus by the Lusin--Novikov Theorem $P' = X \times 2^\N \setminus \proj_{X \times 2^\N}(Q)$ is Borel.
\end{rmk}

\medskip

\noindent\tb{(C)} We note the following strengthening of \cref{thm:equivalence-uniformization-smooth} in the case that $E$ is smooth, where $K(Y)$ denotes the Polish space of compact subsets of $Y$ \cite[4.F]{CDST}:

\begin{thm}\label{thm:smooth-lusin-novikov}
	Let $X, Y$ be Polish spaces, $E$ be a smooth Borel equivalence relation on $X$, and $P \subseteq X \times Y$ be a Borel $E$-invariant set with non-empty sections.
	\begin{enumerate}
		\item If $P$ has countable sections, then $P = \bigcup_n \graph(g_n)$ for a sequence of $E$-invariant Borel maps $g_n: X \to Y$.
		\item If $P$ has $K_\sigma$ sections, then $P_x = \bigcup_n K_n(x)$ for a sequence of $E$-invariant Borel maps $K_n: X \to K(Y)$.
		\item If $P$ has comeager sections, then $P \supseteq \bigcap_n U_n$ for a sequence of $E$-invariant Borel sets $U_n \subseteq X \times Y$ with dense open sections. Moreover, if $P$ has dense $G_\delta$ sections, we can find such $U_n$ with $P = \bigcap_n U_n$.
	\end{enumerate}
\end{thm}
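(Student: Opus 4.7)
The strategy for all three parts mirrors part \textbf{(A)} of the proof of \cref{thm:equivalence-uniformization-smooth}: use smoothness to reduce each statement to a non-invariant parametric decomposition on the quotient space, and then pull back. Fix a Polish space $Z$ and a Borel function $S\colon X \to Z$ reducing $E$ to equality, and define $P^* := \{(z,y) : \forall x\,(S(x)=z \implies (x,y) \in P)\}$ (which is $\bm{\Pi}^1_1$) and $A := \{(z,y) : \exists x\,(S(x)=z \wedge (x,y) \in P)\}$ (which is $\bm{\Sigma}^1_1$). Then $A \subseteq P^*$ and $A_{S(x)} = P^*_{S(x)} = P_x$. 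By Lusin separation, fix a Borel set $P^{**}$ with $A \subseteq P^{**} \subseteq P^*$, so that $P^{**}_{S(x)} = P_x$ for every $x \in X$. In each part, the set $T$ of $z$ for which $P^{**}_z$ has the relevant property contains $S(X)$ and is either $\bm{\Pi}^1_1$ (for countable and $K_\sigma$, by \cite[35.38, 35.47]{DST} and the relativization of \cite[4F.15]{Mo}) or Borel (for comeager, using the Borelness of the category quantifier on Borel sets). By separation, choose Borel $B$ with $S(X) \subseteq B \subseteq T$, and set $Q := P^{**} \cap (B \times Y)$.

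Next, apply the appropriate non-invariant parametric decomposition to $Q$ and pull back via $S$. For (1), Lusin-Novikov (\cite[18.10]{DST}) writes $Q = \bigcup_n \mathop{\rm graph}(h_n)$ for Borel partial functions $h_n\colon B \to Y$; since $Q_{S(x)} = P_x$ is non-empty, the index $n(x) := \min\{n : S(x) \in \mathrm{dom}(h_n)\}$ is Borel, so setting $g_0(x) := h_{n(x)}(S(x))$ and then $g_n(x) := h_n(S(x))$ if $S(x) \in \mathrm{dom}(h_n)$ and $g_n(x) := g_0(x)$ otherwise yields the required $E$-invariant Borel functions with $P = \bigcup_n \mathop{\rm graph}(g_n)$. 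For (2), the parametric decomposition of Borel sets with $K_\sigma$ sections (a consequence of \cite[35.45, 35.46]{DST}) produces Borel $K_n\colon Z \to K(Y)$ with $Q_z = \bigcup_n K_n(z)$, and we set $K_n(x) := K_n(S(x))$. For (3), we invoke the parametric Baire-category statement that every Borel set with comeager sections contains an intersection of countably many Borel sets with dense open sections --- derived from the dual statement that Borel sets with meager sections are covered by countably many Borel sets with closed nowhere dense sections --- to obtain Borel $W_n$, and set $U_n(x,y) \iff W_n(S(x),y)$. The dense $G_\delta$ refinement uses, analogously, the parametric representation of Borel sets with $G_\delta$ sections as intersections of open-sectioned Borel sets, with density supplied by comeagerness, to arrange $P = \bigcap_n U_n$.

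In all three cases the pulled-back objects depend on $x$ only through $S(x)$, hence are automatically $E$-invariant, and Borelness is immediate from composition with $S$. The main technical obstacle is (3): establishing in a Borel-parametric fashion the standard Baire-category decompositions --- that Borel sets with meager (respectively $G_\delta$) sections can be covered by (respectively written as) countable unions (respectively intersections) of Borel sets with closed nowhere dense (respectively open) sections. These are proved by localizing to basic open sets $V \subseteq Y$ and exploiting that the category quantifier preserves Borel, but do not correspond to a single direct citation to \cite{DST} in the way that (1) and (2) do.
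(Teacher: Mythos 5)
Your proposal is correct and follows essentially the same route as the paper: reduce via the smooth reduction $S$ to a Borel set $Q$ on $Z\times Y$ with $Q_{S(x)}=P_x$ (using $\bm{\Pi^1_1}$-definability of the section properties and Lusin separation), apply the classical non-invariant parametric decompositions, and pull back through $S$. The only point you flag as a technical obstacle in part (3) is in fact a single direct citation --- \cite[35.43]{DST} is exactly the parametric category decomposition (comeager sections contain, and dense $G_\delta$ sections equal, a countable intersection of Borel sets with dense open sections) --- and the paper also uses \cite[16.1]{DST} to note that the relevant set of $z$ is outright Borel in the comeager case, so no separation is needed there.
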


\begin{proof}
	The first two assertions follow from \cite[18.10, 35.46]{CDST} applied to $Q$ from the proof of (i) $\implies$ (iv) of \cref{thm:equivalence-uniformization-smooth}.

	For the third, let $Z, S, P^*, P^{**}$ be as in the proof of (i) $\implies$ (iv). By \cite[16.1]{CDST} the set $C$ of all $z$ for which $P^{**}_z$ is comeager is Borel, so $Q(z, y) \iff [C(z) \implies P^{**}(z, y)]$ is Borel with comeager sections and $S(x) = z \implies P_x = Q_z$.

	If moreover $P$ has $G_\delta$ sections, we instead let $A$ be the set of all $z$ for which $P^{**}_z$ is comeager and $G_\delta$, which is $\bm{\Pi^1_1}$ by \cref{thm:Hurewicz}. Then $S(X) \subseteq A$ is $\bm{\Sigma^1_1}$, so by the Lusin Separation Theorem there is a Borel set $S(X) \subseteq C \subseteq A$. We then define $Q$ as above, so that $Q$ moreover has $G_\delta$ sections.

	The result then follows by \cite[35.43]{CDST}.
\end{proof}

\medskip

\noindent\tb{(D)} \Cref{thm:measure-uniformization,thm:category-uniformization,thm:K-sigma-uniformization} are effective, meaning that whenever $P$ is (lightface) $\Delta^1_1$ and satisfies the hypotheses of one of these theorems, then $P$ admits a $\Delta^1_1$ uniformization (cf. \cite[4F.16, 4F.20]{Mo} and the subsequent discussion). Similarly, \cite{HKL} implies that if $E$ is smooth and $\Delta^1_1$ then it has a $\Delta^1_1$ reduction to $\Delta(2^\N)$. The proof of \cref{thm:equivalence-uniformization-smooth} therefore gives the following effective refinement:

\begin{thm}\label{thm:effective-equivalence-uniformization-smooth}
	Let $E$ be a smooth $\Delta^1_1$ equivalence relation on $\N^\N$ and $P \subseteq \N^\N \times \N^\N$ be $\Delta^1_1$ and $E$-invariant. Then $P$ admits a $\Delta^1_1$ $E$-invariant uniformization whenever one of the following holds:
	\begin{enumerate}[label=(\roman*)]
		\item $P$ has $\mu$-positive sections, for some $\Delta^1_1$ probability measure $\mu$ on $\N^\N$;
		\item $P$ has non-meager sections;
		\item $P$ has non-empty $K_\sigma$ sections;
		\item $P$ has non-empty countable sections.
	\end{enumerate}
\end{thm}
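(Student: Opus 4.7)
The plan is to re-run the argument of part \textbf{(A)} of \cref{thm:equivalence-uniformization-smooth} line by line, replacing each classical tool by its lightface effective counterpart, as anticipated in the remark preceding the theorem. By the effective form of HKL (every $\Delta^1_1$ smooth equivalence relation on $\bbN^\bbN$ admits a $\Delta^1_1$ reduction to $\Delta(2^\bbN)$), fix a $\Delta^1_1$ function $S\colon \bbN^\bbN \to 2^\bbN$ reducing $E$ to equality, and define $P^* \subseteq 2^\bbN \times \bbN^\bbN$ exactly as in that proof. Then $P^*$ is $\Pi^1_1$, $P^*_{S(x)} = P_x$ for every $x$, and $P^*_z = \bbN^\bbN$ for $z \notin S(\bbN^\bbN)$, so $P^*$ inherits the appropriate section hypothesis uniformly on all of $2^\bbN$.

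For cases (i) and (ii), the sections of $P^*$ are $\mu$-positive, respectively non-meager, everywhere on $2^\bbN$, so the effective measure and category uniformizations for $\Pi^1_1$ sets (\cite[4F.16, 4F.20]{Mo}) produce a $\Delta^1_1$ uniformization $f^*\colon 2^\bbN \to \bbN^\bbN$ of $P^*$, and $f(x) = f^*(S(x))$ is the desired $\Delta^1_1$ $E$-invariant uniformization of $P$.

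For cases (iii) and (iv), I would imitate the $K_\sigma$ half of part \textbf{(A)}. The set $A = \{(z,y) : \exists x\, (S(x) = z \wedge P(x,y))\}$ is $\Sigma^1_1$ and contained in $P^*$, so effective $\Delta^1_1$-separation yields a $\Delta^1_1$ set $P^{**}$ with $A \subseteq P^{**} \subseteq P^*$; in particular $P^{**}_{S(x)} = P_x$. The set $C$ of $z$ with $P^{**}_z$ non-empty and $K_\sigma$ (resp. non-empty and countable) is $\Pi^1_1$ by the effective versions of \cite[35.47]{DST} (resp. \cite[35.38]{DST}) combined with \cite[4F.15]{Mo}, and it contains the $\Sigma^1_1$ set $S(\bbN^\bbN)$; a second application of $\Delta^1_1$-separation produces a $\Delta^1_1$ set $B$ between them. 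Letting $Q = P^{**} \cap (B \times \bbN^\bbN)$, $Q$ is $\Delta^1_1$, has non-empty $K_\sigma$ (resp. countable) sections over its projection, and satisfies $Q_{S(x)} = P_x$; the effective $K_\sigma$ (resp. countable) uniformization theorem then supplies a $\Delta^1_1$ uniformization $g$, and $f = g \circ S$ finishes the job.

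There is no substantive obstacle here. The work is entirely a bookkeeping verification that every classical ingredient invoked in part \textbf{(A)}—effective HKL, effective Lusin separation, the $\Pi^1_1$ character of the classes of non-empty $K_\sigma$ and non-empty countable sections, and the four uniformization theorems for $\Pi^1_1$ sets with small or large sections—has a lightface $\Delta^1_1$ form, all of which are standard and available in \cite{Mo, DST}.
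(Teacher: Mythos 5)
Your proposal is correct and follows essentially the same route as the paper, which itself only remarks that the classical uniformization theorems and the HKL reduction are effective and that the proof of part (A) of \cref{thm:equivalence-uniformization-smooth} therefore relativizes to the lightface setting; you have simply spelled out that bookkeeping. The only ingredients you rely on beyond what the paper states explicitly (effective measure/category uniformization for $\Pi^1_1$ sets with large sections, effective Lusin separation, and the effective versions of \cite[35.38, 35.47]{DST}) are exactly the standard facts the paper points to in \cite[4F.15, 4F.16, 4F.20]{Mo}.
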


In (i) above, we identify probability Borel measures on $\N^\N$ with points in $[0, 1]^{\N^{<\N}}$, see \cite[17.7]{CDST}.

It is also interesting to consider whether the converse holds. For example, let $E$ be a $\Delta^1_1$ equivalence relation on $\N^\N$, and suppose that for every $\Delta^1_1$ $E$-invariant set $P \subseteq \N^\N \times \N^\N$ which satisfies one of (i)-(iv) above, $P$ admits a $\Delta^1_1$ $E$-invariant uniformization. Must it be the case that $E$ is smooth?

If we replace $\Delta^1_1$ by Borel, then $E$ must indeed be smooth by \cref{thm:equivalence-uniformization-smooth}. However, to prove this we use the fact that every non-smooth $\Delta^1_1$ equivalence relation embeds $\E_0$ \cite{HKL}, and this is not effective: There are non-smooth $\Delta^1_1$ equivalence relations on $\N^\N$ which do not admit $\Delta^1_1$ embeddings of $\E_0$.

\begin{eg}
	Let $A \subseteq \N^\N$ be a nonempty $\Pi^0_1$ set containing no $\Delta^1_1$ points (c.f. \cite[4D.10]{Mo}). Define an equivalence relation $E$ on $\N^\N \times 2^\N \cong \N^\N$ by
	\[(x, y) E (x', y') \iff (x, x' \notin A) \lor (x, x' \in A \And y \E_0 y'),\]
	and note that $E$ is $\Delta^1_1$ and non-smooth because for any $x \in A$ we have that $y \mapsto (x, y)$ is a Borel embedding of $\E_0$ into $E$. If $f: 2^\N \to \N^\N \times 2^\N$ were a $\Delta^1_1$ reduction of $\E_0$ into $E$, then there would be at most one $\E_0$-class $C$ such that $\proj_0(f(x)) \notin A$ for $x \in C$. But then for any $x \in \Delta^1_1 \setminus C$ we would have $\proj_0(f(x)) \in \Delta^1_1 \cap A$, a contradiction.
\end{eg}

Restricting our attention to those $P$ which have countable sections, it turns out that the converse to \cref{thm:effective-equivalence-uniformization-smooth} is false. In fact, using the theory of turbulence, one can construct the following very strong counterexample:

\begin{thm}\label{thm:turbulence-example}
	There is a $\Pi^0_1$ set $N \subseteq \N^\N$ and a $\Delta^1_1$ equivalence relation $E$ on $N$ which is not smooth, and such that every $\Delta^1_1$ $E$-invariant set $P \subseteq N \times \N^\N$ with non-empty countable sections is invariant, meaning that $P_x = P_{x'}$ for all $x, x' \in N$.
\end{thm}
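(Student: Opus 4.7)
The plan is to use Hjorth's theory of turbulence together with effective descriptive set theory. I would start by fixing a $\Delta^1_1$-coded Polish space $X \subseteq \bbN^\bbN$ carrying a generically turbulent $\Delta^1_1$ orbit equivalence relation $E^*$; a concrete choice is the orbit equivalence of the translation action of $c_0$ on $\bbR^\bbN$ under a suitable $\Delta^1_1$ coding. The construction of $(N,E)$ then proceeds by extracting an effectively generic piece of $X$: using the lightface form of Hjorth's theorem, one obtains a non-empty $\Sigma^1_1$ $E^*$-invariant set $G \subseteq X$ of ``$\Delta^1_1$-generic'' orbits, characterized by the property that $G$ is contained in every $\Delta^1_1$-comeager $E^*$-invariant set. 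I would take $N$ to be a non-empty $\Pi^0_1$ subset of $X$ lying in the Gandy--Harrington closure of $G$ and meeting uncountably many $E^*$-classes; such $N$ exists by standard effective arguments analogous to the construction of $\Pi^0_1$ classes of $1$-random reals. Setting $E := E^*|_N$, turbulence together with the richness of $G \cap N$ prevents the existence of a Borel transversal, so $E$ is $\Delta^1_1$ but not (Borel) smooth.

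Suppose now that $P \subseteq N \times \bbN^\bbN$ is $\Delta^1_1$ and $E$-invariant with non-empty countable sections. By the effective Lusin--Novikov theorem, $P = \bigcup_n \mathrm{graph}(f_n)$ for a sequence of $\Delta^1_1$ partial functions $f_n$, and $\Phi(x) := (f_0(x), f_1(x), \ldots)$ defines a $\Delta^1_1$ map that is a homomorphism from $E$ to $E^{\bbN^\bbN}_{ctble}$. The latter is the orbit equivalence of the shift action of $S_\infty$ on $(\bbN^\bbN)^\bbN$, hence classifiable by countable structures. By the effective Hjorth turbulence theorem, any such homomorphism factors, modulo the target, through a single $E^{\bbN^\bbN}_{ctble}$-class on some $\Delta^1_1$ $E^*$-invariant comeager set $C$; and by the defining property of $N$, every point of $N$ lies in $C$. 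We deduce $\Phi(x) \mathrel{E^{\bbN^\bbN}_{ctble}} \Phi(x')$ for all $x, x' \in N$, which is exactly $P_x = P_{x'}$.

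The main obstacle is the construction of $N$ in the first paragraph: it must simultaneously be $\Pi^0_1$, ``effectively generic'' (contained in every $\Delta^1_1$-comeager $E^*$-invariant set), and rich enough that $E^*|_N$ is non-smooth. These demands pull in opposite directions, since $\Delta^1_1$-genericity forces $N$ to have no $\Delta^1_1$ members, while non-smoothness of the restriction requires $N$ to meet many $E^*$-orbits in a sufficiently structured way. Meeting all three requirements calls for the Gandy--Harrington topology and the effective theory of Polish group actions in the Becker--Kechris style; once $N$ is in hand, the uniformization conclusion follows routinely from the effective turbulence theorem applied to $\Phi$.
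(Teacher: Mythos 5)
Your high-level strategy (a turbulent translation action on $\bbR^\bbN$, effective coding of the $\Delta^1_1$ homomorphisms into $E^{\bbN^\bbN}_{ctble}$, and intersecting the associated comeager invariant sets) is the same as the paper's, but there are two genuine gaps. The first is the one you flag yourself: you want $N$ to be a nonempty $\Pi^0_1$ subset \emph{of the turbulent space} that is simultaneously contained in the generic part and meets uncountably many orbits, and you leave this construction open. The paper avoids this tension entirely. It sets $C=\bigcap_{n\in\bbH}C_n$, observes that $C$ is $\Sigma^1_1$ (and comeager and $F$-invariant), and then uses the standard fact that every nonempty $\Sigma^1_1$ set is the image of a $\Pi^0_1$ set $N\subseteq\bbN^\bbN$ under a $\Delta^1_1$ map $c$ with $c(N)=C$; the equivalence relation is then \emph{pulled back}, $x\,E\,x'\iff c(x)\,F\,c(x')$, rather than being a literal restriction of the orbit equivalence relation. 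There is no reason to expect a $\Pi^0_1$ set sitting inside $C$ with the properties you require (indeed such a set would be nowhere dense and would carry no $\Delta^1_1$ points), so the pullback device is not cosmetic --- it is what makes the theorem's statement achievable.

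The second gap is in your uniformization argument. You apply effective Lusin--Novikov to $P\subseteq N\times\bbN^\bbN$ to get a $\Delta^1_1$ homomorphism $\Phi$ from $E$ to $E^{\bbN^\bbN}_{ctble}$ defined only on $N$, and then invoke the ``effective Hjorth turbulence theorem.'' But the turbulence theorem (and the coding $n\mapsto n^*$, $C_n=\bbD_{n^*}$ that defines $C$) applies to \emph{total} $\Delta^1_1$ homomorphisms on $\bbR^\bbN$; a homomorphism defined only on a meager (or merely $\Sigma^1_1$-supported) piece gives no leverage, and $\Phi$ need not have any code in $\bbH$. The paper bridges this by pushing $P$ forward to $Q$ on $C\times\bbN^\bbN$, using $\Sigma^1_1$ separation to find a $\Delta^1_1$ set $R$ agreeing with $Q$ on $C$, and then using a $\Pi^1_1$ reflection/separation step to produce an $F$-invariant $\Delta^1_1$ set $S$ with nonempty countable sections on \emph{all} of $\bbR^\bbN$; only then does effective Lusin--Novikov yield a globally defined $\Delta^1_1$ homomorphism $s$ whose code lies in $\bbH$, so that $\res{s}{C}$ is constant up to $E^{\bbN^\bbN}_{ctble}$. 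Without this extension step your argument does not close. Finally, for non-smoothness the paper has a cleaner route than your appeal to ``richness'': $\res{F}{C}$ has at least two classes since every $F$-class is meager, and a $\Delta^1_1$ reduction of $E$ to equality would have a graph that is itself a $\Delta^1_1$ $E$-invariant set with nonempty countable sections violating the constant-sections property just proved.
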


\begin{cor}\label{cor:effective-non-smooth}
	There is a $\Delta^1_1$ equivalence relation $F$ on $\N^\N$ which is not smooth, and such that every $\Delta^1_1$ $F$-invariant set $P \subseteq \N^\N \times \N^\N$ with non-empty countable sections admits a $\Delta^1_1$ $F$-invariant uniformization.
\end{cor}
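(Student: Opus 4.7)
The plan is to build $F$ directly from $N$ and $E$ of \cref{thm:turbulence-example} by extending $E$ to a trivial equivalence relation off $N$. Specifically, I would set
\[
F = E \cup \Delta_{\bbN^\bbN \setminus N}
\]
as an equivalence relation on all of $\bbN^\bbN$. Then $F$ is $\Delta^1_1$ because $E$ and $N$ are, and $F$ is not smooth because its restriction to the $F$-invariant $\Delta^1_1$ set $N$ is $E$, which fails to be smooth by hypothesis.

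Next I would show the promised uniformization property. Let $P\subseteq \bbN^\bbN \times \bbN^\bbN$ be $\Delta^1_1$, $F$-invariant, with non-empty countable sections. The restriction $P \cap (N \times \bbN^\bbN)$ is then $\Delta^1_1$, $E$-invariant, and has non-empty countable sections, so \cref{thm:turbulence-example} yields a single set $S \subseteq \bbN^\bbN$ with $P_x = S$ for every $x \in N$. Because $E$ has more than one class, $N$ is non-empty, so
\[
y \in S \iff \exists x \in N\, P(x,y) \iff \forall x \in N\, P(x,y),
\]
which shows that $S$ is simultaneously $\Sigma^1_1$ and $\Pi^1_1$, hence $\Delta^1_1$. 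Since $S$ is a non-empty countable $\Delta^1_1$ set, by the basis theorem \cite[4F.15]{Mo} (the same one invoked in the proof of \cref{lem:uniformization-closed-reducibility}) it contains a $\Delta^1_1$ point $y_0$. Separately, effective countable uniformization \cite[4F.20]{Mo} applied to all of $P$ gives a $\Delta^1_1$ uniformization $g\colon \bbN^\bbN \to \bbN^\bbN$ of $P$ (with no invariance requirement).

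Setting
\[
f(x) = \begin{cases} y_0 & \text{if } x \in N, \\ g(x) & \text{if } x \notin N, \end{cases}
\]
gives a $\Delta^1_1$ function, which uniformizes $P$ (since $y_0 \in S = P_x$ for $x \in N$) and is $F$-invariant (constant, hence $E$-invariant, on $N$; and every $F$-class outside $N$ is a singleton, so invariance is automatic there). The only subtle point is the verification that the common section $S$ is $\Delta^1_1$: this is where non-emptiness of $N$ is used to collapse a $\Sigma^1_1$ description to a $\Pi^1_1$ one, which is needed to invoke the basis theorem. Everything else is a direct combination of \cref{thm:turbulence-example} with standard effective uniformization.
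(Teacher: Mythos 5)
Your proposal is correct and follows essentially the same route as the paper's own proof: the same extension of $E$ to an equivalence relation $F$ on all of $\bbN^\bbN$ that is trivial off $N$, the same identification of the common section $S$ as $\Delta^1_1$ via the $\exists/\forall$ collapse over $N$, the same extraction of a $\Delta^1_1$ point $y_0 \in S$, and the same patching with an effective Lusin--Novikov uniformization off $N$. No gaps.
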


\begin{proof}
	Let $N, E$ be as in \cref{thm:turbulence-example} and define
	\[x F x' \iff (x = x') \lor (x, x' \in N \And x E x').\]

	Suppose now that $P \subseteq \N^\N \times \N^\N$ were $\Delta^1_1$, $F$-invariant, and had non-empty countable sections. Let $y \in A \iff \exists x \in N (P(x, y)) \iff \forall x \in N(P(x, y))$. Then $A$ is countable and $\Delta^1_1$, and $P_x = A$ for all $x \in N$. In particular, by \cref{thm:Ksigma-delta11-point} $A$ contains a $\Delta^1_1$ point, say $y_0$.

	By the effective Lusin--Novikov Theorem \cite[4F.6, 4F.16]{Mo} there is a $\Delta^1_1$ uniformization $f$ of $P$. Letting $g(x) = f(x)$ for $x \notin N$, and $g(x) = y_0$ otherwise, gives a $\Delta^1_1$ $F$-invariant uniformization of $P$.
\end{proof}

\begin{rmk}
	The equivalence relation $F$ constructed in \cref{cor:effective-non-smooth} gives another example of a non-smooth $\Delta^1_1$ equivalence relation which does not admit a $\Delta^1_1$ reduction of $\E_0$. Indeed, if $f$ were a $\Delta^1_1$ reduction of $\E_0$ to $F$ and $y_0 \in \Delta^1_1$ then
	\[P = \{(x, y) \in \N^\N \times 2^\N : \exists x' \in 2^\N (f(x') F x \And y \E_0 x') \lor (x \notin [f(2^\N)]_F \And y = y_0)\}\]
	would be a $\Delta^1_1$ $F$-invariant set with non-empty countable sections, by the effective Lusin--Novikov Theorem \cite[4F.6, 4F.16]{Mo}. But then $P$ admits a $\Delta^1_1$ $F$-invariant uniformization $g$, and $g \circ f$ is be a $\Delta^1_1$ selector for $\E_0$, a contradiction.
\end{rmk}

Fix now a parametrization of the $\Delta^1_1$ sets in $\N^\N$ (see \cite[Section~3H]{Mo}). That is, fix a set $\bbD \subseteq \N$ and two sets $S, P \in \N \times \N^\N$ such that
\begin{enumerate}[label=(\roman*)]
	\item $\bbD$ is $\Pi^1_1$, $S$ is $\Sigma^1_1$ and $P$ is $\Pi^1_1$;
	\item for $d \in \bbD$, $S_d = P_d$, and we denote this set by $\bbD_d$; and
	\item every $\Delta^1_1$ set in $\N^\N$ appears as $\bbD_d$ for some $d \in \bbD$.
\end{enumerate}
We may similarly fix a parametrization of the $\Delta^1_1$ functions from $\R^\N$ to $\N^\N$, namely, sets $\bbF \subseteq \N$, $S', P' \subseteq \N \times \R^\N \times \N^\N$ such that
\begin{enumerate}[label=(\roman*)]
	\item $\bbF$ is $\Pi^1_1$, $S'$ is $\Sigma^1_1$ and $P'$ is $\Pi^1_1$;
	\item for $n \in \bbF$, $S'_n = P'_n$, and we denote this set by $\bbF_n$; and
	\item every $\Delta^1_1$ function from $\R^\N$ to $\N^\N$ appears as $\bbF_n$ for some $n \in \bbF$.
\end{enumerate}
To see this, let $\bbD'$ be a parametrization of the $\Delta^1_1$ sets in $\R^\N \times \N^\N$, and note that
\begin{align*}
	n \in \bbF \iff & n \in \bbD' \And \bbD'_n \text{ is a function} \\
	\iff & n \in \bbD' \And \forall x \exists y \in \Delta^1_1(x) P_n(x, y) \And \\
	& \forall x, y, y' (S_n(x, y) \And S_n(x, y') \implies y = y')
\end{align*}
is $\Pi^1_1$ by \cref{thm:Ksigma-delta11-point,thm:restricted-quantification}. (These are the analogues of the coding of \cref{sec:hardness-proofs} for the $\Delta^1_1$ sets and functions.)

\begin{proof}[Proof of \cref{thm:turbulence-example}]
	Consider the group $\R^\N$ and the translation action of $\ell^1 \subseteq \R^\N$ on $\R^\N$, which is turbulent by \cite[Section~10(ii)]{kechris_turbulence}. Let $F$ be the induced equivalence relation, which is clearly $\Delta^1_1$.

	Let $\bbD, \bbF$ be the parametrizations of the $\Delta^1_1$ sets and functions we fixed above. Let $\bbH \subseteq \bbF$ be the set of those $n \in \bbF$ for which $\bbF_n$ is a homomorphism of $F$ into $E^{\N^\N}_{ctble}$, and note that $\bbH$ is $\Pi^1_1$ as
	\[n \in \bbH \iff n \in \bbF \And \forall x, x' (x F x' \implies \bbF_n(x) E^{\N^\N}_{ctble} \bbF_n(x')).\]

	By the proof of \cite[Theorem~12.5(i)$\implies$(ii)]{kechris_turbulence}, for each $n \in \bbH$ there is $\Delta^1_1$ comeager $F$-invariant set $C_n \subseteq \R^\N$ which $\bbF_n$ maps to a single $\E^{\N^\N}_{ctble}$-class. Moreover, there is a computable map $n \mapsto n^*$ such that if $n \in \bbH$ then $n^* \in \bbD$ and $C_n = \bbD_{n^*}$.

	Put $C = \bigcap_{n \in \bbH} C_n \subseteq \R^\N$. Then $C$ is comeager, $F$-invariant and $\Sigma^1_1$, since
	\[a \in C \iff \forall n (n \in \bbH \implies a \in \bbD_{n^*}).\]
	Moreover, for each $\Delta^1_1$ homomorphism $f$ of $F$ to $\E^{\N^\N}_{ctble}$, $\res{f}{C}$ maps into a single $\E^{\N^\N}_{ctble}$-class.

	Let now $N \subseteq \N^\N$ be $\Pi^0_1$ and $c: N \to \R^\N$ be a $\Delta^1_1$ map such that $c(N) = C$. Define the $\Delta^1_1$ equivalence relation $E$ on $N$ by
	\[x E x' \iff c(x) F c(x').\]
	We will show that this $E$ works.

	Let $P \subseteq N \times \N^\N$ be $\Delta^1_1$ and $E$-invariant with non-empty countable sections. Define $Q \subseteq C \times \N^\N$ by
	\begin{align*}
		(a, y) \in Q &\iff a \in C \And \exists x \in N (c(x) = a \And P(x, y)) \\
		&\iff a \in C \And \forall x \in N (c(x) = a \implies P(x, y)).
	\end{align*}
	Note that $Q$ is $F$-invariant. Moreover, $Q$ is $\Delta^1_1$ on the $\Sigma^1_1$ set $C \times \N^\N$, i.e., it is the intersection of $C \times \N^\N$ with a $\Sigma^1_1$ set in $\R^\N \times \N^\N$ as well as with a $\Pi^1_1$ set in $\R^\N \times \N^\N$. By $\Sigma^1_1$ separation, there is a $\Delta^1_1$ set $R \subseteq \R^\N \times \N^\N$ such that $R \cap (C \times \N^\N) = Q$.

	Let now $C^* \subseteq \R^\N$ be defined by
	\[a \in C^* \iff \forall a' [a F a' \implies R_a = R_{a'} \And R_a \text{ is countable and non-empty}].\]
	We claim that $C^*$ is $\Pi^1_1$. Indeed, by the Effective Perfect Set Theorem \cite[4F.1]{Mo}, $R_a$ is countable iff $R_a \subseteq \Delta^1_1(a)$, for all $a \in \R^\N$. Thus
	\[R_a \text{ is countable} \iff \forall x (x \in R_a \implies x \in \Delta^1_1(a)),\]
	which is $\Pi^1_1$ by \cite[4D.14]{Mo}. Moreover, when $R_a$ is countable, checking that it is non-empty is $\Pi^1_1$ by \cref{thm:restricted-quantification}.

	Thus $C^*$ is $\Pi^1_1$, $F$-invariant and contains $C$, so there is a $\Delta^1_1$ set $B$ which is $F$-invariant and such that $C \subseteq B \subseteq C^*$. Finally, define $S \subseteq \R^\N \times \N^\N$ by
	\[(a, y) \in S \iff [a \in B \And R(a, y)] \lor [a \notin B \And y = y_0]\]
	for some fixed $\Delta^1_1$ point $y_0$ in $\N^\N$. Then $S$ is $\Delta^1_1$, $F$-invariant, and has non-empty countable sections.

	Let $s: \R^\N \to (\N^\N)^\N$ be a $\Delta^1_1$ homomorphism of $F$ to $\E^{\N^\N}_{ctble}$ for which $S_a = \{s(a)_n\}$ for all $a \in \R^\N$, which exists by the effective Lusin--Novikov Theorem. By the definition of $C$, we have that $\res{s}{C}$ maps into a single $\E^{\N^\N}_{ctble}$-class. Let $A$ be the corresponding countable set. Then for $a \in C$ and any $x \in N$ with $c(x) = a$,
	\[A = S_a = R_a = Q_a = P_x,\]
	so $P_x = A$ for all $x \in N$.

	It remains to check that $E$ is not smooth. To see this, note that $\res{F}{C}$ has at least two classes (as every $F$-class is meager), and hence so does $E$. If $E$ were smooth, then there would be a $\Delta^1_1$ map $f: N \to \N^\N$ for which
	\[x E x' \iff f(x) = f(x').\]
	But then $P = \graph(f)$ would be $\Delta^1_1$, $E$-invariant, have non-empty countable sections, and satisfy $P_x \neq P_{x'}$ for some $x, x' \in N$, a contradiction.
\end{proof}

When $E$ is $\Delta^1_1$ and smooth, we have seen in \cref{thm:effective-equivalence-uniformization-smooth} that the effective analogue of \cref{thm:equivalence-uniformization-smooth} holds for $E$. On the other hand, by \cref{cor:effective-non-smooth} there is a non-smooth $\Delta^1_1$ equivalence relation $F$ for which every $\Delta^1_1$ $F$-invariant set with countable sections admits a $\Delta^1_1$ $F$-invariant uniformization, i.e., \cref{thm:effective-equivalence-uniformization-smooth} (iv) holds for a non-smooth $\Delta^1_1$ equivalence relation $F$. It is an interesting question whether this can happen when $P$ has ``large'' or $K_\sigma$ sections.

\begin{prob}
	Is there a $\Delta^1_1$ equivalence relation $E$ on $\N^\N$ which is not smooth, and such that all $\Delta^1_1$ $E$-invariant sets $P \subseteq \N^\N \times \N^\N$ satisfying one of (i)-(iii) in \cref{thm:effective-equivalence-uniformization-smooth} admit a $\Delta^1_1$ $E$-invariant uniformization?
\end{prob}

Finally, we remark that if $E$ is a $\Delta^1_1$ equivalence relation which is not smooth then there is a continuous embedding of $\E_0$ into $E$ which is recursive in Kleene's $\mathcal{O}$, the universal $\Pi^1_1$ subset of $\N$. In particular, the converse of \cref{thm:effective-equivalence-uniformization-smooth} holds if we consider all such $P \in \Delta^1_1(\mathcal{O})$.

\section{Proofs of \texorpdfstring{\cref{thm:complexity-of-counterexamples,thm:ramsey-example}}{Theorems~\ref*{thm:ramsey-example}~and~\ref*{thm:complexity-of-counterexamples}}}\label{sec:complexity-of-counterexamples}

\textbf{(A)} We first prove \cref{thm:complexity-of-counterexamples}. Below, we let $F(Y)$ denote the Effros Borel space of closed subsets of $Y$ (cf. \cite[12.C]{CDST}).

\begin{lem}\label{lem:complexity-of-counterexamples-reduction}
	Suppose $X, Y$ are Polish spaces, $E$ is a Borel equivalence relation on $X$, and $P \subseteq X \times Y$ is Borel, $E$-invariant, and has $F_\sigma$ sections. If there is an $E$-invariant Borel map $x \mapsto F_x \in F(Y) \setminus \{\emptyset\}$ such that $P_x$ is non-meager in $F_x$ for all $x \in X$, then $P$ admits a Borel $E$-invariant uniformization.
\end{lem}

\begin{proof}
	By \cite[12.13]{CDST}, there is a sequence of $E$-invariant Borel functions $y_n: X \to Y$ such that $\{y_n(x)\}$ is dense in $F_x$ for all $x \in X$. Since $P_x$ is non-meager and $F_\sigma$ in $F_x$, $P_x$ contains an open set in $F_x$, and in particular contains some $y_n(x)$. Thus the map taking $x \in X$ to the least $y_n(x)$ such that $P(x, y_n(x))$ is an $E$-invariant Borel uniformization of $P$.
\end{proof}

Suppose now that $P$ satisfies one of (i)-(iii) in \cref{thm:complexity-of-counterexamples}. Note that $P$ has $F_\sigma$ sections in all of these cases, so by \cref{lem:complexity-of-counterexamples-reduction} it suffices to construct a Borel map $x \mapsto F_x \in F(Y) \setminus \{\emptyset\}$ such that $P_x$ is non-meager in $F_x$ for all $x \in X$.

In case (ii) we simply take $F_x = Y$.

Consider next case (i), that there is a Borel assignment $x \mapsto \mu_x$ of probability Borel measures on $Y$ such that $\mu_x(P_x) > 0$ and $P_x \in \bm{\Delta^0_2}$ for all $x \in X$. Let $\nu_x$ denote the probability Borel measure $\mu_x$ restricted to $P_x$, i.e., $\nu_x(A) = \mu_x(A \cap P_x) / \mu_x(P_x)$, and define $F_x$ to be the support of $\nu_x$, that is, $F_x$ is the smallest $\nu_x$-conull closed set in $Y$. The map $x \mapsto F_x$ is Borel: if $U \subseteq Y$ is open, then
\[F_x \cap U \neq \emptyset \iff \nu_x(U) > 0 \iff \mu_x(U \cap P_x) > 0,\]
and the set of all $x \in X$ satisfying this is Borel by \cite[17.25]{CDST}.

Because $F_x$ is the support of $\nu_x$, any open set in $F_x$ is $\nu_x$-positive, and therefore any $\nu_x$-null $F_\sigma$ set in $F_x$ is meager. Now for all $x \in X$, $P_x$ is $G_\delta$ and $\nu_x$-conull in $F_x$ so $P_x$ is comeager in $F_x$.

Finally consider case (iii), that $P_x \in G_\delta$ and $P_x$ is non-empty and $K_\sigma$ for all $x \in X$. Let $F_x$ be the closure of $P_x$ in $Y$. Then $P_x$ is dense $G_\delta$, hence comeager, in $F_x$, so it remains to check that $x \mapsto F_x$ is Borel. Indeed,
\[F_x \cap U \neq \emptyset \iff P_x \cap U \neq \emptyset,\]
and this is Borel by the Arsenin--Kunugui Theorem \cite[18.18]{CDST}, as $P_x \cap U$ is $K_\sigma$ for all $x \in X$.

\medskip
\noindent\textbf{(B)} We now prove \cref{thm:ramsey-example}.

Let $X = [\N]^{\aleph_0}$ denote the space of infinite subsets of $\N$. By identifying subsets of $\N$ with their characteristic functions, we can view $X$ as an $\E_0$-invariant $G_\delta$ subspace of $2^\N$. Note that this is a dense $G_\delta$ in $2^\N$, and it is $\mu$-conull, where $\mu$ is the uniform product measure on $2^\N$. We let $E$ denote the equivalence relation $\E_0$ restricted to $X$.

Let $Y = 2^\N$, and define $P \subseteq X \times Y$ by
\[P(A, B) \iff |A \setminus B| = |A \cap B| = \aleph_0.\]
Then $P$ is $G_\delta$ and $E$-invariant, and $P_x$ is comeager for all $x \in X$. By the Borel--Cantelli lemma, one easily sees that $\mu(P_x) = 1$ for all $x \in X$.

We claim that $P$ does not admit an $E$-invariant Borel uniformization. Indeed, suppose such a uniformization $f: X \to Y$ existed. By \cite[19.19]{CDST}, there is some $A \in X$ such that $\res{f}{[A]^{\aleph_0}}$ is continuous, where $[A]^{\aleph_0}$ denotes the space of infinite subsets of $A$. Since $E$-classes are dense in $[A]^{\aleph_0}$, $\res{f}{[A]^{\aleph_0}}$ is constant, say with value $B$. Then $f(A) = B$, so $P(A, B)$ and $A \cap B$ is infinite. But then $A \cap B \in [A]^{\aleph_0}$, so $f(A \cap B) = B$. But $(A \cap B) \setminus B$ is not infinite, so $\lnot P(A \cap B, B)$, a contradiction.

\begin{rmk}
	Using the same Ramsey-theoretic arguments, one can show that the following examples also do not admit $E$-invariant uniformizations:
	\begin{enumerate}
		\item Let $Y$ be the space of graphs on $\N$ and set $Q(A, G)$ iff for all finite disjoint sets $x, y \subseteq \N$ there is some $a \in A$ which is adjacent (in $G$) to every element of $x$ and no element of $y$, i.e., $A$ contains witnesses that $G$ is the random graph.
		\item Let $Y = [\N]^{\aleph_0}$, and for $B \in Y$ let $f_B: \N \to \N$ denote its increasing enumeration. Then take $R(A, B)$ iff $f_B(A)$ contains infinitely many even and infinitely many odd elements.
	\end{enumerate}
	As with $P$ above, $Q, R$ both have $\mu$-conull dense $G_\delta$ sections.
\end{rmk}

\section{Dichotomies and anti-dichotomies}

\subsection{Proof of \texorpdfstring{\cref{thm:ben-dichotomy}}{Theorem~\ref*{thm:ben-dichotomy}}}\label{sec:proof-of-ben-1}

Here we derive Miller's dichotomy \cref{thm:ben-dichotomy} for sets with countable sections, from Miller's $(\G_0, \bbH_0)$ dichotomy \cite{Mi-survey} and Lecomte's $\aleph_0$-dimensional hypergraph dichotomy \cite{L}.

For $s \in 2^{< \N}$, define a Borel graph on $2^\N$ by
\[\G_s = \{(s^\frown (i)^\frown x, s^\frown (1-i)^\frown x) : i < 2, x \in 2^\N\},\]
where ${}^\frown$ denotes concatenation of sequences, and write $\G_S = \bigcup_{s \in S} \G_s$ for $S \subseteq 2^{< \N}$. For $t = (t_0, t_1) \in \bigcup_{n \in \N} 2^n \times 2^n$, define a Borel graph on $2^\N$ by
\[\bbH_t = \{(t_i {}^\frown (i)^\frown x, t_{1-i} {}^\frown (1-i)^\frown x) : i < 2, x \in 2^\N\},\]
and write $\bbH_T = \bigcup_{t \in T} \bbH_t$ for $T \subseteq \bigcup_{n \in \N} 2^n \times 2^n$.

Given $S \subseteq 2^{< \N}$ and $T \subseteq \bigcup_{n \in \N} 2^n \times 2^n$ we say $(S, T)$ is \tb{sparse} if
\[|S \cap 2^n| + |T \cap 2^n \times 2^n| \leq 1\]
for all $n \in \N$. We say $(S, T)$ is \tb{dense} if both $S$ and $T$ are dense, i.e.,
\[\forall u \in 2^{< \N} \exists s \in S (u \subseteq s) ~~\text{and}~~ \forall u \in 2^{< \N} \times 2^{< \N} \exists t \in T \forall i < 2 (u_i \subseteq t_i).\]
Fix a pair $(S, T)$ that is sparse and dense and let $\G_0 = \G_S$ and $\bbH_0 = \bbH_T$.

If $R_i, S_i$ are $d_i$-ary relations on $X, Y$, for $i \in I$, then a \tb{homomorphism} from $(R_i)_{i \in I}$ to $(S_i)_{i \in I}$ is a map $f: X \to Y$ such that $(f(x_n))_{n < d_i} \in S_i$ whenever $(x_n)_{n < d_i} \in R_i$. If $G$ is a hypergraph on a Polish space $X$ we say $G$ has \tb{countable Borel chromatic number}, and write $\chi_B(G) \leq \aleph_0$, if $X$ can be covered by countably-many $G$-independent Borel sets.

\begin{thm}[The $(\G_0, \bbH_0)$ dichotomy, Miller {\cite[Theorem~25]{Mi-survey}}]\label{thm:G0H0}
	Let $X$ be a Polish space, $G$ be an analytic graph on $X$, and $E$ be an analytic equivalence relation on $X$. Then exactly one of the following holds:
	\begin{enumerate}
		\item There is a smooth Borel equivalence relation $F \supseteq E$ such that $\chi_B(G \cap F) \leq \aleph_0$.
		\item There is a continuous homomorphism $\varphi: 2^\N \to X$ from $(\G_0, \bbH_0)$ to $(G, E)$.
	\end{enumerate}
\end{thm}

We will also need the following consequence of the $\aleph_0$-dimensional $\G_0$ dichotomy, where $\sigma(\bm{\Sigma^1_1})$ denotes the $\sigma$-algebra generated by the analytic sets.

\begin{thm}\label{thm:upgrade-colouring}
	Let $X$ be a Polish space and $G$ be an analytic $\aleph_0$-dimensional hypergraph on $X$. If $G$ has a $\sigma(\bm{\Sigma^1_1})$-measurable countable colouring, then $\chi_B(G) \leq \aleph_0$.
\end{thm}

\begin{proof}
	Let $f$ be a $\sigma(\bm{\Sigma^1_1})$-measurable countable colouring of $G$ and suppose for the sake of contradiction that $G$ does not admit a Borel countable colouring. By the $\aleph_0$-dimensional $\G_0$ dichotomy \cite[Theorem~1.6]{L} and \cite[Lemma~2.1]{L} there is a graph $\G_0^\omega$ on $\N^\N$ and a dense $G_\delta$ set $C \subseteq \N^\N$ such that $\res{\G_0^\omega}{C}$ does not admit a countable Baire-measurable colouring and there is a continuous homomorphism $\varphi: C \to X$ from $\res{\G_0^\omega}$ to $G$. But then $f \circ \varphi$ is a $\sigma(\bm{\Sigma^1_1})$-measurable (and in particular Baire-measurable) countable colouring of $\res{\G_0^\omega}{C}$, a contradiction.
\end{proof}

We begin the proof of \cref{thm:ben-dichotomy} by noting the following equivalent formulations of the second alternative.

\begin{prop}\label{prop:dichotomy-equivalent-conditions}
	Let $X, Y$ be Polish spaces, $E$ a Borel equivalence relation on $X$ and $P \subseteq X \times Y$ an $E$-invariant Borel relation with countable non-empty sections. Then the following are equivalent:
	\begin{enumerate}
		\item[(2)] There is a continuous embedding $\pi_X: 2^\N \times \N \to X$ of $\E_0 \times I_\N$ into $E$ and a continuous injection $\pi_Y: 2^\N \times \N \to Y$ such that for all $x, x' \in 2^\N \times \N$,
		\[\lnot (x \mathrel{\E_0 \times I_\N} x') \implies P_{\pi_X(x)} \cap P_{\pi_X(x')} = \emptyset\]
		and
		\[P_{\pi_X(x)} = \pi_Y([x]_{\E_0 \times I_\N}).\]
		\item[(3)] There is a continuous embedding $\pi_X: 2^\N \to X$ of $\E_0$ into $E$ and a continuous injection $\pi_Y: 2^\N \to Y$ such that for all $x, x' \in 2^\N$,
		\[\lnot(x \mathrel{\E_0} x') \implies P_{\pi_X(x)} \cap P_{\pi_X(x')} = \emptyset\]
		and
		\[\pi_Y(x) \in P_{\pi_X(x)}.\]
		\item[(4)] There is a continuous embedding $\pi_X: 2^\N \to X$ of $\E_0$ into $E$ such that for all $x, x' \in 2^\N$,
		\[\lnot(x \mathrel{\E_0} x') \implies P_{\pi_X(x)} \cap P_{\pi_X(x')} = \emptyset.\]
	\end{enumerate}
\end{prop}

\begin{proof}
	Clearly (2) $\implies$ (3) $\implies$ (4). Assume now that (4) holds, and is witnessed by $\pi_X$. Let $g$ be a uniformization of $P$ and $\pi_Y = g \circ \pi_X$. Since $\pi_Y$ is countable-to-one, by the Lusin--Novikov Theorem there is a Borel non-meager set $B \subseteq 2^\N$ on which $\pi_Y$ is injective.

	\begin{claim}\label{claim:embed-E0-into-itself}
		There is a continuous embedding of $\E_0$ into $\res{\E_0}{B}$.
	\end{claim}

	\begin{claimproof}
		Fix $s \in 2^{< \N}$ such that $B$ is comeager in $N_s$ and let $U_n$ be a decreasing sequence of open sets that are dense in $N_s$ and satisfy $N_s \cap \bigcap_{n \in \N} U_n \subseteq B$. We will recursively construct sequences $t_n \in 2^{< \N}$ and maps $\varphi_n: 2^n \to 2^{< \N}$ satisfying:
		\begin{enumerate}
			\item $\varphi_0(\emptyset) = s$ and $\varphi_{n+1}(u^\frown i) = \varphi_n(u)^\frown t_n {}^\frown i$ for all $u \in 2^n, i \in 2$; and
			\item $N_{\varphi_n(u)^\frown t_n} \subseteq U_n$ for all $u \in 2^n$.
		\end{enumerate}
		Given this, we define $\varphi(x) = \bigcup_{n \in \N} \varphi_n(\res{x}{n})$ for $x \in 2^\N$. It is clear by (1) that $\varphi: 2^\N \to N_s$ is a continuous embedding of $\E_0$ into $\E_0$, and hence by (2) that $\varphi(x) \in N_s \cap \bigcap_{n \in \N} U_n \subseteq B$ for all $x \in 2^\N$.

		It remains to show how to construct, given $\varphi_n$, a sequence $t_n$ satisfying (2). For any $u \in 2^n$ we have $s \subseteq \varphi_n(u)$ by (1), and as $U_n$ is open and dense in $N_s$ we may extend any $t \in 2^{< \N}$ to some $t' \in 2^{< \N}$ such that $N_{\varphi_n(u)^\frown t'} \subseteq U_n$. Begin with $t = \emptyset$ and recursively extend $t$ to $t'$ in this way for all $u \in 2^n$. Letting $t_n$ be the resulting sequence we have $N_{\varphi_n(u)^\frown t_n} \subseteq U_n$ for all $u \in 2^n$, so (2) is satisfied.
	\end{claimproof}

	Let now $h$ be a continuous embedding of $\E_0$ into $\res{\E_0}{B}$, and note that $\pi_X \circ h, \pi_Y \circ h$ witness that (3) holds.

	Now suppose (3) holds, and is witnessed by $\pi_X, \pi_Y$. Let $h$ be a continuous embedding of $\E_0 \times I_\N$ into $\E_0$, and let $\tilde{\pi}_X = \pi_X \circ h$. Let $F$ be the equivalence relation on $Y$ defined by $y F y'$ iff $y = y'$ or there is some $x\in 2^\N \times \N$ such that $P(\tilde{\pi}_X(x), y)$ and $P(\tilde{\pi}_X(x), y')$. If $y \neq y'$, then the set of $x$ witnessing that $y F y'$ is a single $\E_0 \times I_\N$-class, so by Lusin--Novikov $F$ is Borel. Thus $\pi_Y \circ h$ is an embedding of $\E_0 \times I_\N$ into the countable Borel equivalence relation $F$. The proof of \cite[Proposition~2.3]{DJK} yields a Borel embedding $\tilde{\pi}_Y$ of $\E_0 \times I_\N$ into $F$ such that (a) $\tilde{\pi}_Y(2^\N \times \N)$ is $F$-invariant and (b) $\tilde{\pi}_Y(x) F (\pi_Y \circ h)(x)$ for all $x \in \E_0 \times I_\N$.

	Now $\tilde{\pi}_X, \tilde{\pi}_Y$ would be witnesses to (2), except that $\tilde{\pi}_Y$ is not necessarily continuous. However, $\tilde{\pi}_Y$ is continuous when restricted to an $\E_0 \times I_\N$-invariant comeager Borel set $C$, so it suffices to find a continuous invariant embedding of $\E_0 \times I_\N$ into $\res{(\E_0 \times I_\N)}{C}$. One gets such an embedding by applying \cite[Proposition~1.4]{Mi-dichotomy} to the relation $x R x'$ iff $x (\E_0 \times I_\N) x'$ or $x \notin C$ or $x' \notin C$.
\end{proof}

\begin{rmk}
	From the proof of (3) $\implies$ (2), one sees that if $E$ is a countable Borel equivalence relation then actually one can strengthen (2) so that $\pi_X$ is a continuous invariant embedding of $\E_0 \times I_\N$ into $E$, i.e., a continuous embedding such that additionally $\pi_X([x]_{\E_0 \times I_\N}) = [\pi_X(x)]_E$, for all $x \in 2^\N \times I_\N$.
\end{rmk}

The next two results will be used in the proof of \cref{thm:ben-dichotomy}.

\begin{thm}[\Cref{thm:unif-from-smooth}]\label{thm:unif-from-smooth-4}
	Let $F$ be a smooth Borel equivalence relation on a Polish space $X$, $Y$ be a Polish space, and $P \subseteq X \times Y$ be a Borel set with countable sections. Suppose that
	\[\bigcap_{x \in C} P_x \neq \emptyset\]
	for every $F$-class $C$. Then $P$ admits a Borel $F$-invariant uniformization.
\end{thm}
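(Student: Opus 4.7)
The plan is to use smoothness of $F$ to reduce to uniformization over a Borel transversal, and then to use the Lusin-Novikov theorem together with the reduction property of $\bm{\Pi^1_1}$ to handle the coanalytic nature of the ``diagonal'' set $P^*_{x_0} := \bigcap_{x \mathrel{F} x_0} P_x$. Fix a Borel transversal $X_0 \subseteq X$ for $F$ along with the associated Borel retraction $T \colon X \to X_0$ satisfying $T(x) \mathrel{F} x$. Note that $P^* \subseteq X_0 \times Y$, defined by the $\bm{\Pi^1_1}$ condition $\forall x\, (x \mathrel{F} x_0 \implies P(x,y))$, has sections $P^*_{x_0}$ which are nonempty by hypothesis and countable as subsets of $P_{x_0}$. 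Once I produce a Borel map $h \colon X_0 \to Y$ with $h(x_0) \in P^*_{x_0}$ for all $x_0$, the function $f(x) := h(T(x))$ will be the desired Borel $F$-invariant uniformization of $P$, since $x \mathrel{F} T(x)$ forces $f(x) \in P^*_{T(x)} \subseteq P_x$.

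To construct such an $h$ from the coanalytic $P^*$, I would apply the Lusin-Novikov theorem to the Borel set $P$ to obtain Borel partial functions $g_n$ with $P = \bigcup_n \mathop{\rm graph}(g_n)$, and set
\[A_n := \{x_0 \in X_0 \cap \mathop{\rm dom}(g_n) : g_n(x_0) \in P^*_{x_0}\},\]
which is $\bm{\Pi^1_1}$ for each $n$. Since every $y \in P^*_{x_0} \subseteq P_{x_0}$ is of the form $g_n(x_0)$ for some $n$, we have $\bigcup_n A_n = X_0$. Invoking the reduction property of $\bm{\Pi^1_1}$, I obtain pairwise disjoint $\bm{\Pi^1_1}$ sets $A_n^* \subseteq A_n$ with $\bigsqcup_n A_n^* = X_0$. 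Each $A_n^* = X_0 \setminus \bigcup_{m \neq n} A_m^*$ is then the complement in the Borel set $X_0$ of a countable union of $\bm{\Pi^1_1}$ sets, hence $\bm{\Sigma^1_1}$; since it is also $\bm{\Pi^1_1}$, it is Borel by Suslin's theorem. The assignment $h(x_0) := g_n(x_0)$ for the unique $n$ with $x_0 \in A_n^*$ is therefore a Borel function with $h(x_0) \in P^*_{x_0}$, as required.

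The main obstacle is the coanalyticity of $P^*$, which rules out a direct appeal to the Borel uniformization theorems used in the proof of \cref{thm:equivalence-uniformization-smooth} (note the asymmetry with that proof: since $P$ is not assumed $F$-invariant here, the usual trick of sandwiching a $\bm{\Sigma^1_1}$ set between analytic and coanalytic layers no longer applies, because the ``union'' set $\bigcup_{x \mathrel{F} x_0} P_x$ need not be contained in $P^*_{x_0}$). The key idea in circumventing this is that Lusin-Novikov applied to the Borel set $P$ converts the selection problem for $P^*$ from a $Y$-valued one into an $\bbN$-valued one (which index $n$ to pick), and for $\bbN$-valued selection the $\bm{\Pi^1_1}$ reduction property promotes a coanalytic choice into a Borel one.
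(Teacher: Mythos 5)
Your argument has a genuine gap at its very first step: a smooth Borel equivalence relation need not admit a Borel transversal, nor the associated Borel retraction $T$. Smoothness only provides a Borel reduction $S\colon X\to Z$ to equality on a Polish space; the existence of a Borel set meeting every class in exactly one point is a strictly stronger property. For a counterexample, take a closed set $C\subseteq\bbN^\bbN\times\bbN^\bbN$ with ${\rm proj}_{\bbN^\bbN}(C)=\bbN^\bbN$ admitting no Borel uniformization \cite[18.17]{DST}, and let $F$ on $C$ identify points with the same first coordinate: $F$ is smooth (reduce via the projection), but a Borel transversal would be the graph of a Borel uniformization of $C$. Since everything after your first sentence lives on $X_0$ and is transported back via $T$, the proof does not go through as written.

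The rest of your argument --- converting the $Y$-valued selection problem for the coanalytic set $P^*$ into an $\bbN$-valued one via Lusin--Novikov, and then using the $\bm{\Pi^1_1}$ reduction (equivalently, number uniformization) property to make the choice of index definable --- is exactly the right idea, and it is the core of the paper's proof. But without a transversal you must run it on all of $X$: setting $Q(x,n)\iff g_n(x)\in P^*_{S(x)}$ and choosing $n=h(x)$ in a Borel way, the map $x\mapsto g_{h(x)}(x)$ uniformizes $P$ but need not be $F$-invariant, since $F$-equivalent points may select different elements of the same countable set $P^*_{S(x)}$. The paper repairs this by pushing the problem down to $Z$: the $\bm{\Sigma^1_1}$ set $A(z,y)\iff\exists x\,(S(x)=z\And y=g_{h(x)}(x))$ is enclosed in a Borel set $P^{**}$ with $A\subseteq P^{**}\subseteq P^*$ by Lusin separation, a second separation (using that $\{z : P^{**}_z \text{ countable}\}$ is $\bm{\Pi^1_1}$ and contains $S(X)$) yields a Borel $B\supseteq S(X)$ on which $P^{**}$ has countable sections, and Lusin--Novikov then uniformizes $R(z,y)\iff z\in B\And P^{**}(z,y)$ over $Z$; composing with $S$ gives $F$-invariance for free. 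You would need to add this (or an equivalent) second round of separation arguments to close the gap.
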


\begin{proof}
	Let $Z$ be a Polish space and $S: X \to Z$ be a Borel map such that $x F x' \iff S(x) = S(x')$. Define $P^* \subseteq Z \times Y$ by
	\[P^*(z, y) \iff \forall x (S(x) = z \implies P(x, y)).\]
	Note that $P^*$ is $\bm{\Pi^1_1}$, and that if $S(x) = z$ then
	\[P^*_z = \bigcap_{x F x'} P_{x'}\]
	is non-empty and countable.

	By Lusin--Novikov, fix a sequence $g_n$ of Borel maps $g_n: X \to Y$ such that $P = \bigcup_n \operatorname{graph}(g_n)$. Define $Q(x, n) \iff P^*(S(x), g_n(x))$. Then $Q$ is $\bm{\Pi^1_1}$ and has non-empty sections, so by the Novikov Separation Theorem (see \cite[28.5]{CDST} and the following two paragraphs) we can fix a Borel map $h$ uniformizing $Q$.

	Let now $A(z, y) \iff \exists x(S(x) = z \And y = g_{h(x)}(x))$. Then $A \subseteq P^*$ is $\bm{\Sigma^1_1}$, so by the Lusin Separation Theorem there is a Borel set $A \subseteq P^{**} \subseteq P^*$. By \cite[18.9]{CDST}, the set
	\[C = \{z \mid P^{**}_z \text{ is countable}\}\]
	is $\bm{\Pi^1_1}$, and it contains $S(X)$, so by the Lusin Separation Theorem again there is some Borel set $S(X) \subseteq B \subseteq C$.

	By Lusin--Novikov, there is a Borel uniformization $f$ of $R(z, y) \iff B(z) \And P^{**}(z, y)$. Then $f \circ S$ is an $F$-invariant Borel uniformization of $P$.
\end{proof}

\begin{prop}\label{prop:smooth-refinement}
	Let $E$ be an analytic equivalence relation on a Polish space $X$, $F \supseteq E$ be a smooth Borel equivalence relation on $X$, $Y$ be a Polish space, and $P \subseteq X \times Y$ be a Borel $E$-invariant set with countable sections. Suppose that
	\[x F x' \implies P_x \cap P_{x'} \neq \emptyset\]
	for all $x, x' \in X$. Then there is a smooth equivalence relation $E \subseteq F' \subseteq F$ such that
	\[\bigcap_{x \in C} P_x \neq \emptyset\]
	for every $F'$-class $C$.
\end{prop}

\begin{proof}
	Let $G \subseteq X^\N$ be the $\aleph_0$-dimensional hypergraph of $F$-equivalent sequences $x_n$ such that $\bigcap_n P_{x_n} = \emptyset$. By Lusin--Novikov, $G$ is Borel.

	We claim that $\chi_B(G) \leq \aleph_0$. By \cref{thm:upgrade-colouring}, it suffices to show that $G$ has a $\sigma(\bm{\Sigma^1_1})$-measurable countable colouring. Let $S$ be a $\sigma(\bm{\Sigma^1_1})$-measurable selector for $F$ and $g_n$ be a sequence of Borel functions such that $P = \bigcup_n \operatorname{graph}(g_n)$. Then the function $f(x)$ assigning to $x$ the least $n$ such that $P(x, g_n(S(x)))$ is such a colouring. (In fact, $x \mapsto g_{f(x)}(S(x))$ is a $\sigma(\bm{\Sigma^1_1})$-measurable $F$-invariant uniformization of $P$.)

	\begin{claim}\label{claim:enlarge-to-borel}
		Suppose $A$ is analytic and $G$-independent. Then it is contained in an $E$-invariant, $G$-independent Borel set.
	\end{claim}

	\begin{claimproof}
		As $G$ is analytic the property ``$A$ is $G$-independent'' is $\bm{\Pi^1_1}$-on-$\bm{\Sigma^1_1}$, so by the First Reflection Theorem (c.f. \cite[35.10]{CDST} and the paragraph before \cite[35.11]{CDST}) if $A$ is analytic and $G$-independent then it is contained in a $G$-independent Borel set. Note also that if $A$ is analytic (resp. $G$-independent) then so is $[A]_E$.

		Let now $A$ be analytic and $G$-independent and let $C' \supseteq [A]_E$ be Borel and $G$-independent. Define
		\[x \in C \iff \forall x' \in X (x E x' \implies x' \in C').\]
		Then $A \subseteq [A]_E \subseteq C \subseteq C'$ so $C$ is $\bm{\Pi^1_1}$, $E$-invariant and $G$-independent.

		By the Lusin Separation Theorem, we may recursively construct an increasing sequence of Borel sets $B_n$ such that $A \subseteq B_0 \subseteq C$ and $[B_n]_E \subseteq B_{n+1} \subseteq C$. Now $B = \bigcup_n B_n$ is Borel, $E$-invariant, $G$-independent (as $B \subseteq C$), and contains $A$.
	\end{claimproof}

	As $\chi_B(G) \leq \aleph_0$, we may fix a sequence $B_n$ of $G$-independent Borel sets covering $X$. By the claim, we may assume that each $B_n$ is $E$-invariant. Define $x F' x' \iff x F x' \And \forall n(x \in B_n \iff x' \in B_n)$. Then $F'$ is a Borel equivalence relation, $E \subseteq F' \subseteq F$, and $F'$ is smooth by \cite[Proposition~5.4.4]{Gao-IDST}. Fix $x = x_0 \in X$, in order to show that
	\[\bigcap_{x F' x'} P_{x'} \neq \emptyset.\]
	Fix an enumeration $y_n, n \geq 1$ of $P_x$, and suppose for the sake of contradiction that this intersection is empty. Then for each $n$, there is some $x_n F' x$ with $y_n \notin P_{x_n}$. Also, $x \in B_k$ for some $k$. But then $x_n \in B_k$ for all $n$, so $B_k$ is not $G$-independent, a contradiction.
\end{proof}

\begin{rmk}
	In \cref{prop:smooth-refinement}, if $F$ admits a Borel selector $S$ then the map $f$ we construct in the proof is a Borel $F$-invariant uniformization of $P$. However, a smooth Borel equivalence relation may not admit a Borel selector; see \cite[Theorem~5.4.11 and Exercise~5.4.6]{Gao-IDST}. On the other hand, every smooth Borel equivalence relation admits a $\sigma(\bm{\Sigma^1_1})$-measurable selector \cite[Theorem~6.4.4]{Gao-IDST}.
\end{rmk}

\begin{proof}[Proof of {\cref{thm:ben-dichotomy}}]
	The two cases are mutually exclusive. Indeed, if $\pi_X, \pi_Y$ were maps witnessing (2) and $f$ were a Borel $E$-invariant uniformization of $P$ then $f \circ \pi_X$ would be a Borel reduction of $\E_0 \times I_\N$ to $\Delta_Y$, which is impossible as $\E_0 \times I_\N$ is not smooth. To see that at least one of them holds, define the graph $G$ on $X$ by $x G x' \iff P_x \cap P_{x'} = \emptyset$. By Lusin--Novikov, this is a Borel graph. By the $(\G_0, \bbH_0)$ dichotomy applied to the pair $(G, E)$, exactly one of the following two cases hold.

	\textbf{Case 1:} There is a smooth Borel equivalence relation $F \supseteq E$ such that $\chi_B(G \cap F) \leq \aleph_0$.

	\begin{claim}
		If $A$ is analytic and $G$-independent, then it is contained in an $E$-invariant, $G$-independent Borel set.
	\end{claim}

	\begin{claimproof}
		By the First Reflection Theorem \cite[35.10]{CDST}, if $A$ is analytic and $G$-independent then it is contained in a $G$-independent Borel set. Note also that if $A$ is analytic and $G$-independent then so is $[A]_E$. Recursively construct a sequence $B_n$ of $G$-independent Borel sets with $A \subseteq B_0$ and $[B_n]_E \subseteq B_{n+1}$. It is easy to verify that $B = \bigcup_n B_n$ is Borel, $E$-invariant, $G$-independent, and contains $A$.
	\end{claimproof}

	We may therefore fix a sequence $B_n$ of $E$-invariant, $G$-independent Borel sets covering $X$, and define $x F' x' \iff x F x' \And \forall n (x \in B_n \iff x' \in B_n)$. Then $F'$ is a Borel equivalence relation, $E \subseteq F' \subseteq F$, $F'$ is smooth (by \cite[Proposition~5.4.4]{Gao-IDST}), and each $F'$-class is contained in some $B_n$, hence is $G$-independent. That is, if $x F' x'$ then $P_x \cap P_{x'} \neq \emptyset$. Thus case (1) of \cref{thm:ben-dichotomy} holds, by \cref{prop:smooth-refinement,thm:unif-from-smooth-4}.

	\textbf{Case 2:} There is a continuous homomorphism $f: 2^\N \to X$ from $(\G_0, \bbH_0)$ to $(G, E)$. We claim that (2) holds in \cref{thm:ben-dichotomy}, and for this it suffices to show that (4) holds in \cref{prop:dichotomy-equivalent-conditions}. To see this, consider $F = (f \times f)^{-1}(E), R = (f \times f)^{-1}(G)$. Then $\bbH_0 \subseteq F$ and each $F$-section is $\G_0$-independent, hence meager, so by Kuratowski--Ulam $F$ is meager. We claim $R$ is comeager. To see this, fix $x \in 2^\N$ and consider $R^c_x = \{x' : P_{f(x)} \cap P_{f(x')} \neq \emptyset\}$. Fix an enumeration $y_n$ of $P_{f(x)}$, and let $A_n = \{x' : y_n \in P_{f(x')}\}$. Then each $A_n$ is $\G_0$-independent, hence meager, and $R^c_x = \bigcup_n A_n$. Thus $R$ has comeager sections, and by Kuratowski--Ulam $R$ is comeager. One can now recursively construct a continuous homomorphism $g$ from $((\Delta_{2^\N})^c, \E_0^c, \E_0)$ to $((f \times f)^{-1}(\Delta_X)^c, R, \E_0)$; see e.g. the proof of \cite[Proposition~11]{Mi3}. Then $f \circ g$ satisfies (4).
\end{proof}

\subsection{An \texorpdfstring{$\aleph_0$}{aleph0}-dimensional \texorpdfstring{$(\G_0, \bbH_0)$}{(G0, H0)} dichotomy}\label{sec:aleph-0-dim-g0-h0}

In this section we state and prove an $\aleph_0$-dimensional analogue of Miller's $(\G_0, \bbH_0)$ dichotomy \cite[Theorem~25]{Mi-survey}. This dichotomy generalizes Lecomte's $\aleph_0$-dimensional $\G_0$ dichotomy \cite{L} (see also \cite{Mi-countable-dim}) in the same way that Miller's $(\G_0, \bbH_0)$ dichotomy generalizes the $\G_0$ dichotomy \cite{KST}. We then state an effective analogue of this theorem, and indicate the changes that must be made to prove it.

\medskip

\noindent\tb{(A)} For $s \in \N^{< \N}$, define a Borel directed hypergraph on $\N^\N$ by
\[\G_s^\omega = \{(s^\frown i^\frown x)_{i \in \N} : x \in \N^\N\},\]
and write $\G_S^\omega = \bigcup_{s \in S} \G_s^\omega$ for $S \subseteq \N^{< \N}$. For $t = (t_0, t_1) \in \bigcup_{n \in \N} \N^n \times \N^n$, define a Borel directed graph on $2^\N$ by
\[\bbH_t^\omega = \{(t_0 {}^\frown 0^\frown x, t_1 {}^\frown 1^\frown x) : x \in \N^\N\},\]
and write $\bbH_T^\omega = \bigcup_{t \in T} \bbH_t^\omega$ for $T \subseteq \bigcup_{n \in \N} \N^n \times \N^n$.

Given $S \subseteq \N^{< \N}$ and $T \subseteq \bigcup_{n \in \N} \N^n \times \N^n$, we say $(S, T)$ is \tb{sparse} if
\[|S \cap \N^n| + |T \cap \N^n \times \N^n| \leq 1\]
for all $n \in \N$, and $(S, T)$ is \tb{full} if
\[|S \cap \N^n| + |T \cap \N^n \times \N^n| \geq 1\]
for all $n \in \N$. We say $(S, T)$ is \tb{dense} if both $S$ and $T$ are dense, i.e.,
\[\forall u \in \N^{< \N} \exists s \in S (u \subseteq s) ~~\text{and}~~ \forall u \in \N^{< \N} \times \N^{< \N} \exists t \in T \forall i < 2 (u_i \subseteq t_i).\]

Fix a strictly increasing sequence $\alpha \in \N^\N$ and a pair $(S, T)$ that is sparse, full and dense. Let
\[X_\alpha = \{x \in \N^\N : \forall n \exists m \geq n (\res{x}{m} \in \alpha(m)^m)\},\]
and note that $X_\alpha$ is a dense $G_\delta$ set in $\N^\N$. We define $\G_0^\omega = \res{\G_S^\omega}{X_\alpha}$ and $\bbH_0^\omega = \res{\bbH_T^\omega}{X_\alpha}$.

\begin{prop}[{\cite[Lemma~2.1]{L}}]\label{prop:G0-independent-meager}
	Let $A \subseteq X_\alpha$ be Baire measurable and $\G_0^\omega$-independent. Then $A$ is meager.
\end{prop}

\begin{proof}
	Suppose $A$ is non-meager, and fix an open set $N_s = \{x \in \N^\N : s \subseteq x\}$ in which $A$ is comeager. By density of $S$, we may assume wlog that $s \in S$. For each $n$, the set $A_n = \{x \in \N^\N : s^\frown n^\frown x \in A\}$ is comeager, so there is some $x \in \bigcap_n A_n$. But then $x_n = s^\frown n^\frown x \in A$ and $\G_0^\omega((x_n))$, so $A$ is not $\G_0^\omega$-independent.
\end{proof}

Let $R$ be a quasi-order on a Polish space $X$. We let $\equiv_R$ denote the equivalence relation $x \equiv_R y \iff x R y \And y R x$. We say $R$ is \textbf{lexicographically reducible} if there is a Borel reduction of $R$ to the lexicographic order $\leq_{\text{lex}}$ on $2^\alpha$, for some $\alpha < \omega_1$. In particular, if $R$ is lexicographically reducible then $R$ is Borel and total and $\equiv_R$ is smooth.

Given a quasi-order $R$ on $X$ and $A \subseteq X$, we define
\[[A]^R = \{y : \exists x \in A(x R y)\} ~~\text{and}~~ [A]_R = \{y : \exists x \in A(y R x)\}\]
and say $A$ is \tb{closed upwards} (resp. \tb{downward}) for $R$ if $A = [A]^R$ (resp. $A = [A]_R$). If $A, B \subseteq X$, we say $(A, B)$ is \tb{$R$-independent} if $A \times B \cap R = \emptyset$.

\begin{prop}[{\cite[Proposition~5]{Mi4}}]\label{prop:H0-ergodic}
	Let $R$ be a total quasi-order on $X_\alpha$ with $\bbH_0^\omega \subseteq R$. If $R$ is Baire measurable, then some $\equiv_R$-class is comeager.
\end{prop}

\begin{proof}
	We will show that $R$ is comeager. From this it follows that $\mathord{\equiv_R} = R \cap R^{-1}$ is comeager, and hence by Kuratowski--Ulam there is a comeager $\equiv_R$-class.

	As $R$ is total, $X^2 = R \cup R^{-1}$ so $R$ is non-meager. Let $u_0, u_1 \in \N^{< \N}$ be such that $R$ is comeager in $N_{u_0} \times N_{u_1}$. We will show that $R$ is non-meager in $N_{v_0} \times N_{v_1}$ for all $v_0, v_1 \in \N^{< \N}$. To see this, let $t, t' \in T$ be such that $v_0 \subseteq t_0, u_0 \subseteq t_1, u_1 \subseteq t'_0, v_1 \subseteq t'_1$. The set of all $(x, y) \in \N^\N \times \N^\N$ such that $t_1 {}^\frown 1^\frown x R t'_0 {}^\frown 0^\frown y$ is comeager, and for any such $(x, y)$ we have
	\[t_0 {}^\frown 0^\frown x R t_1 {}^\frown 1^\frown x R t'_0{}^\frown 0^\frown y R t'_1 {}^\frown 1^\frown y\]
	and hence $v_0 {}^\frown 0^\frown x R v_1 {}^\frown 1^\frown y$. Thus $R$ is comeager in $N_{v_0 {}^\frown 0} \times N_{v_1 {}^\frown 1}$.
\end{proof}

\begin{prop}[{\cite[Proposition~1]{Mi4}}]\label{prop:reflection-quasi-order}
	Let $R$ be an analytic quasi-order on a Polish space $X$ and $A_0, A_1 \subseteq X$ be analytic such that $(A_0, A_1)$ is $R$-independent. Then there are Borel sets $A_i \subseteq B_i$ such that $(B_0, B_1)$ is $R$-independent, $B_0$ is closed upwards for $R$, and $B_1$ is closed downwards for $R$.
\end{prop}

\begin{proof}
	By the First Reflection Theorem \cite[35.10]{CDST}, if $A_0, A_1$ are analytic and $(A_0, A_1)$ is $R$-independent then there are Borel sets $B_i \supseteq A_i$ such that $(B_0, B_1)$ is $R$-independent. Indeed, if $A_1, R$ are analytic then the property (of $A \subseteq X$) that ``$(A, A_1)$ is $R$-independent'' is $\bm{\Pi^1_1}$-on-$\bm{\Sigma^1_1}$, so there is a Borel set $B_0 \supseteq A_0$ such that $(B_0, A_1)$ is $R$-independent, and a symmetric argument gives a Borel set $B_1 \supseteq A_1$ such that $(B_0, B_1)$ is $R$-independent.

	Note that if $A_0, A_1$ are analytic and $(A_0, A_1)$ is $R$-independent, then the same holds for $[A_0]^R, [A_1]_R$. We may therefore recursively construct sequences $B_i^n$ of Borel sets such that $A_i \subseteq B_i^0$, $[B_0^n]^R \subseteq B_0^{n+1}$, $[B_1^n]_R \subseteq B_1^{n+1}$, and $(B_0^n, B_1^n)$ is $R$-independent. It is now easy to verify that the sets $B_i = \bigcup_n B_i^n$ are Borel, $(B_0, B_1)$ is $R$-independent, $A_i \subseteq B_i$, $B_0$ is closed upward for $R$, and $B_1$ is closed downward for $R$.
\end{proof}

Let $F$ be an equivalence relation on $X$ and $G$ be an $\aleph_0$-dimensional directed hypergraph on $X$. We say $A \subseteq X$ is \tb{$F$-locally $G$-independent} if there is no sequence $x_n \in A$ of pairwise $F$-equivalent points with $(x_n) \in G$. If $X$ is Polish, a \tb{countable Borel $F$-local colouring of $G$} is a sequence of $F$-locally $G$-independent Borel sets that cover $X$.

\begin{thm}\label{thm:countable-dimensional-dichotomy}
	Let $G$ be an analytic $\aleph_0$-dimensional directed hypergraph on a Polish space $X$ and $R$ an analytic quasi-order on $X$. Then exactly one of the following holds:
	\begin{enumerate}[label=(\arabic*)]
		\item There is a lexicographically reducible quasi-order $R'$ on $X$ such that $R \subseteq R'$ and there is a countable Borel $\equiv_{R'}$-local colouring of $G$.
		\item There is a continuous homomorphism from $(\G_0^\omega, \bbH_0^\omega)$ to $(G, R)$.
	\end{enumerate}
\end{thm}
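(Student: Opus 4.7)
My plan is to combine Miller's proof of the $(G_0, H_0)$ dichotomy with Lecomte's proof of the $\aleph_0$-dimensional $G_0$ dichotomy. Mutual exclusivity will follow from the ergodicity/meagerness lemmas already established, and the main direction will be proved by a Kechris--Solecki--Todorcevic-style derivative argument that builds either the colouring in case (1) or, via a Cantor scheme, the homomorphism in case (2).

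For mutual exclusivity, suppose both alternatives hold, with witnesses $(R', c)$ for (1) and $f$ for (2). Since $R \subseteq R'$ and $f$ is an $H_0^\omega \to R$ homomorphism, $f$ descends to a homomorphism of $\equiv_{H_0^\omega}$ to $\equiv_{R'}$. Hence $F = (f \times f)^{-1}(\equiv_{R'})$ is a smooth Borel equivalence relation on $X_\alpha$ containing $\equiv_{H_0^\omega}$, so its classes are Borel and $\equiv_{H_0^\omega}$-invariant, and by \cref{prop:H0-ergodic} each is meager or comeager; pick a comeager one $C$. On $C$, $c \circ f$ is Borel into $\bbN$, so some fibre $A = (c \circ f)^{-1}(n_0) \cap C$ is non-meager, and by \cref{prop:G0-independent-meager} is not $G_0^\omega$-independent. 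A $G_0^\omega$-sequence $(x_k) \in A^\bbN$ then maps under $f$ to a $G$-tuple of pairwise $\equiv_{R'}$-equivalent (as all $x_k \in C$) and $c$-monochromatic points, contradicting $\equiv_{R'}$-locality of $c$.

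For the dichotomy itself, define the $\sigma$-ideal $\mathcal{I}$ on analytic subsets of $X$ consisting of those $A$ for which $\res{G}{A}$ admits a countable Borel $\equiv_{R'}$-local colouring for some analytic lex reducible quasi-order $R' \supseteq R$; by the first reflection theorem (cf. \cref{prop:reflection-quasi-order}) one can pass from analytic to Borel witnesses, so if $X \in \mathcal{I}$ we are in case (1). Otherwise, pick an analytic $A$ with $A \notin \mathcal{I}$ that is minimal in the sense that every proper analytic subset obtained by removing an open set or refining by the derivative lies in $\mathcal{I}$. The homomorphism $f$ is then constructed by recursion on $\bbN^{<\bbN}$, producing continuous partial maps $f_u : N_u \cap X_\alpha \to A$ such that at each stage $u \in S$ a $G$-tuple refines the current approximation (ensuring $G_0^\omega \to G$) and at each stage $(u_0, u_1) \in T$ an $R$-pair refines the approximation (ensuring $H_0^\omega \to R$). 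Minimality of $A$ combined with the density of $S$ and $T$ in $\bbN^{<\bbN}$ guarantees such witnesses always exist, and taking limits along branches in $X_\alpha$ produces the desired continuous homomorphism.

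The main technical obstacle is to ensure the positive case yields a \emph{lex reducible} quasi-order rather than merely a smooth refinement of $\equiv_R$: this will require iterating the derivative through countable ordinals and encoding the ordinal stage at which each point is coloured, so that the composition of this ordinal-valued invariant with the chosen colour gives a $2^\alpha$-valued Borel assignment reducing $R'$ to lexicographic order. A secondary subtlety is interleaving the $S$-steps and $T$-steps of the Cantor scheme coherently so that both homomorphism conditions hold simultaneously in the limit, which parallels Miller's argument but must be adapted to the $\aleph_0$-dimensional branching of $G_0^\omega$.
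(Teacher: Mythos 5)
Your mutual-exclusivity argument is essentially the paper's and is fine. The main direction, however, has genuine gaps. Note first that the paper does not use a KST-style derivative at all: it runs Miller's ``smallness of homomorphism sets'' machinery, working with Borel sets $\H \subseteq \hom(H_0, H_1; G, R)$ of finite approximations, declaring $\H$ \emph{tiny} either (type 1) when some vertex-projection $\H(v)$ is $\equiv_{R'}$-locally $G$-independent, or (type 2) when no copy in $\H$ is $\equiv_{R'}$-monochromatic, and proving that largeness is preserved under the amalgamation operations $\oplus_v$ and $\prescript{}{u}{+}^{}_v$ that generate $G_0^\omega$ and $H_0^\omega$. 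Against that, your sketch never defines the derivative you propose to iterate, and ``pick an analytic $A \notin \mathcal{I}$ that is minimal'' is not a construction one can perform: the $\sigma$-ideal $\mathcal{I}$ has no minimal positive elements, and actual derivative arguments instead iterate to a stable core, using a boundedness or reflection argument to show stabilization at a countable ordinal --- none of which you supply.

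The more fundamental missing idea is how the quasi-order enters the first alternative. The entire content of a $(G_0, H_0)$-type dichotomy beyond the $G_0$-type one is that when the construction cannot find an $R$-pair joining two large pieces at a $T$-stage, this failure must be converted into a \emph{refinement of the quasi-order}: one shows the two relevant vertex-projections form an $(R \cap \equiv_{R'})$-independent pair, reflects this to Borel sets closed upwards/downwards for $R \cap \equiv_{R'}$ (\cref{prop:reflection-quasi-order}), and enlarges $R'$ by separating the up-set from the down-set. This is exactly the ``tiny of type 2'' bookkeeping in the paper's \cref{claim:dichotomy-large}. Your proposal asserts that witnesses at $T$-steps ``always exist'' by minimality but says nothing about what happens when they do not, which is precisely where alternative (1) acquires a genuine quasi-order rather than merely a smooth equivalence relation. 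By contrast, the lex-reducibility issue you flag as the main obstacle is comparatively routine (concatenate the countably many reductions, as in \cref{claim:dichotomy-small}); the real work you have omitted is the type-2 refinement step and the verification that it, together with the colouring side, forms a $\sigma$-ideal argument that actually closes.
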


\begin{proof}
	To see that (1) and (2) are mutually exclusive it suffices to show that there is no Borel total quasi-order $R'$ on $X_\alpha$ such that $\bbH_0^\omega \subseteq R'$ and there is a countable Borel $\equiv_{R'}$-local colouring of $\G_0^\omega$. Towards a contradiction, let $R'$ be such a quasi-order. By assumption, if $C$ is an $\equiv_{R'}$-class then there is a countable Borel colouring of $\res{\G_0^\omega}{C}$, so $C$ is meager by \cref{prop:G0-independent-meager}. But there is a comeager $\equiv_{R'}$-class by \cref{prop:H0-ergodic}, a contradiction.

	We now show that at least one of these alternatives hold. Fix continuous maps $\pi_G: \N^\N \to X^\N$, $\pi_R: \N^\N \to X^2$ such that $G = \pi_G(\N^\N)$, $R = \pi_R(\N^\N)$. Let $d$ denote the usual metric on $\N^\N$, and $d_X$ be a complete metric compatible with the Polish topology on $X$.

	Let $V$ be a set, $H_0$ be an $\aleph_0$-dimensional directed hypergraph on $V$ with edge set $E_0$, and $H_1$ be a directed graph on $V$ with edge set $E_1$. A \textbf{copy} of $(H_0, H_1)$ in $(G, R)$ is a triple $\varphi = (\varphi_X, \varphi_G, \varphi_R)$ where $\varphi_X: V \to X, \varphi_G: E_0 \to \N^\N, \varphi_R: E_1 \to \N^\N$, such that
	\[e = (v_n) \in E_0 \implies \pi_G(\varphi_G(e)) = (\varphi_X(v_n))_{n \in \N},\]
	and
	\[e = (v, u) \in E_1 \implies \pi_R(\varphi_R(e)) = (\varphi_X(v), \varphi_X(u)).\]
	That is, $\varphi_X$ is a homomorphism from $(H_0, H_1)$ to $(G, R)$, and $\varphi_G, \varphi_R$ pick out elements of $\N^\N$ that witness this (via $\pi_G, \pi_R$).

	Let $\hom(H_0, H_1; G, R)$ denote the set of all copies of $(H_0, H_1)$ in $(G, R)$. Note that if $V, E_0, E_1$ are countable, then $\hom(H_0, H_1; G, R) \subseteq X^V \times (\N^\N)^{E_0} \times (\N^\N)^{E_1}$ is closed, and hence Polish.

	We let $\bullet$ denote the trivial (hyper)graph, i.e., the (hyper)graph $H$ with a single vertex and edge set $E = \emptyset$. In this case, a copy of $(\bullet, \bullet)$ in $(G, R)$ is simply an element of $X$, and we identify $\hom(\bullet, \bullet; G, R)$ with $X$.

	Suppose now we have $H_0, H_1$ as above, with $V, E_0, E_1$ countable, and consider $\H \subseteq \hom(H_0, H_1; G, R)$. Let $\H(v) = \{\varphi_X(v) : \varphi \in \H\} \subseteq X$ for $v \in V$, and note that $\H(v)$ is analytic whenever $\H$ is analytic. Define $\H(e) \subseteq \N^\N$ similarly for $e \in E_0 \cup E_1$. Now call $\H$ \textbf{tiny} if it is Borel and there is a lexicographically reducible quasi-order $R'$ on $X$ such that $R \subseteq R'$ and one of the following holds:
	\begin{enumerate}[label=(\arabic*)]
		\item $\H(v)$ is $\equiv_{R'}$-locally $G$-independent for some $v \in V$.
		\item $\forall \varphi \in \H \exists u, v \in V(\varphi_X(u) \not\equiv_{R'} \varphi_X(v))$.
	\end{enumerate}
	In this case, we call $R'$ a \textbf{witness} that $\H$ is tiny, and say $\H$ is tiny of type 1 (resp. 2) if $\H, R'$ satisfy (1) (resp. (2)). Finally, we say $\H$ is \textbf{small} if it is in the $\sigma$-ideal generated by the tiny sets, and otherwise we call $\H$ \textbf{large}.

	Finally, fix $H_0, H_1$ as above with $V, E_0, E_1$ countable. For $v \in V$ we define the $\aleph_0$-dimensional directed hypergraph $\oplus_v H_0$ (resp. the directed graph $\oplus_v H_1$) by taking a countable disjoint union of $H_0$ (resp. $H_1$) on the vertex set $V \times \N$, and adding the edge $(v^\frown n)_{n \in \N}$ to $\oplus_v H_0$. Similarly, for $u, v \in V$ we define the $\aleph_0$-dimensional directed hypergraph $H_0 \prescript{}{u}{+}^{}_v H_0$ (resp. the directed graph $H_1 \prescript{}{u}{+}^{}_v H_1$) by taking a countable disjoint union of $H_0$ (resp. $H_1$) on the vertex set $V \times \N$, and adding the edge $(u^\frown 0, v^\frown 1)$ to $H_1 \prescript{}{u}{+}^{}_v H_1$. Note that there are natural continuous projection maps
	\[\hom(\oplus_v H_0, \oplus_v H_1; G, R) \to \hom(H_0, H_1; G, R)\]
	and
	\[\hom(H_0 \prescript{}{u}{+}^{}_v H_0, H_1 \prescript{}{u}{+}^{}_v H_1; G, R) \to \hom(H_0, H_1; G, R),\]
	for all $n \in \N$, taking $\varphi$ to its restriction $\varphi^n$ to $V \times \{n\}$. If $\H \subseteq \hom(H_0, H_1; G, R)$, we let
	\begin{align*}
		\oplus_v \H &= \{ \varphi \in \hom(\oplus_v H_0, \oplus_v H_1; G, R) : \forall n(\varphi^n \in \H)\},\\
		\H \prescript{}{u}{+}^{}_v \H &= \{ \varphi \in \hom(H_0 \prescript{}{u}{+}^{}_v H_0, H_1 \prescript{}{u}{+}^{}_v H_1; G, R) : \forall n(\varphi^n \in \H)\}.
	\end{align*}
	Note that if $\H$ is Borel then so are $\oplus_v \H$ and $\H \prescript{}{u}{+}^{}_v \H$.

	\begin{claim}\label{claim:dichotomy-small}
		If $\hom(\bullet, \bullet; G, R)$ is small, then there is a lexicographically reducible quasi-order $R'$ on $X$ such that $R \subseteq R'$ and there is a countable Borel $\equiv_{R'}$-local colouring of $G$.
	\end{claim}

	\begin{claimproof}
		As noted above, we may identify $\hom(\bullet, \bullet; G, R)$ with $X$. As the trivial (hyper)graph has only one vertex, there cannot be any tiny sets $\H \subseteq \hom(\bullet, \bullet; G, R)$ of type $2$. We therefore have by our assumption a sequence $\H_n$ of Borel sets covering $X$ such that each $\H_n$ is tiny of type $1$, i.e., there is a lexicographically reducible quasi-order $R_n$ such that $\H_n$ is $\equiv_{R_n}$-locally $G$-independent.

		Let $f_n: X \to 2^{\alpha_n}$ be a Borel reduction of $R_n$ to the lexicographic ordering on $2^{\alpha_n}$, $\alpha_n < \omega_1$. Let $\alpha = \sum_n \alpha_n < \omega_1$ and define $f: X \to 2^\alpha$ by $f(x) = f_0(x)^\frown f_1(x)^\frown f_2(x)^\frown \cdots$. As $f$ is Borel, the quasi-order $x R' y \iff f(x) \leq_{\text{lex}} f(y)$ is lexicographically reducible. Moreover, $R \subseteq \bigcap_n R_n \subseteq R'$. Finally, $\mathord{\equiv_{R'}} = (\bigcap_n \equiv_{R_n})$, so each $\H_n$ is $\equiv_{R'}$-locally $G$-independent.
	\end{claimproof}

	\begin{claim}\label{claim:dichotomy-diameter}
		Let $H_0, H_1$ be as above with $V, E_0, E_1$ countable, $F \subseteq V \cup E_0 \cup E_1$ be finite, $\varepsilon > 0$, and $\H \subseteq \hom(H_0, H_1; G, R)$ be large and Borel. Then there is a large Borel set $\H' \subseteq \H$ for which $\diam_{d_X}(\H'(v)) < \varepsilon$ for all $v \in F \cap V$ and $\diam_{d}(\H'(e)) < \varepsilon$ for all $e \in F \cap (E_0 \cup E_1)$.
	\end{claim}

	\begin{claimproof}
		By induction it suffices to consider the case $|F| = 1$. We will show how to handle the case where $F = \{v\} \subseteq V$; the case where $F \subseteq E_0 \cup E_1$ is handled identically.

		Fix now $v \in V$ and let $U_n$ be a sequence of open sets covering $X$ such that $\diam_{d_X}(U_n) < \varepsilon$ for all $n \in \N$. For $n \in \N$ let
		\[\H_n = \{\varphi \in \H : \varphi_X(v) \in U_n\},\]
		and note that $\H_n$ is Borel and $\H_n(v) \subseteq U_n$ so $\diam_{d_X}(\H_n(v)) < \varepsilon$. If each $\H_n$ is small then $\H = \bigcup_n \H_n$ is small, as the small sets form a $\sigma$-ideal, so there is some $n$ for which $\H_n$ is large and we can take $\H' = \H_n$.
	\end{claimproof}

	\begin{claim}\label{claim:dichotomy-large}
		Let $H_0, H_1$ be as above with $V, E_0, E_1$ countable, and suppose $\H \subseteq \hom(H_0, H_1; G, R)$ is Borel and large. Then $\oplus_v \H, \H \prescript{}{u}{+}^{}_v \H$ are Borel and large.
	\end{claim}

	\begin{claimproof}
		That these sets are Borel is clear.

		Now suppose $\oplus_v \H$ is small, write $\oplus_v \H = \bigcup_{i \in 2, n \in \N} \F^i_n$ with $\F^i_n$ tiny of type $i$, and fix witnesses $R^i_n$ that $\F^i_n$ is tiny of type $i$. Arguing as in the proof of \cref{claim:dichotomy-small}, there is a lexicographically reducible quasi-order $R'$ with $R \subseteq R'$ and $\mathord{\equiv_{R'}} = \bigcap_{i, n} \equiv_{R^i_n}$, and hence we may assume wlog that $R^i_n = R'$ for all $i, n$.

		Let $v_n \in V$ be such that $\F^0_n(v_n)$ is $\equiv_{R'}$-locally $G$-independent. By the First Reflection Theorem \cite[35.10]{CDST}, we may fix Borel sets $\F^0_n(v_n) \subseteq A_n$ which are $\equiv_{R'}$-locally $G$-independent. Define $\H_n = \{\varphi \in \H : \varphi_X(v_n) \in A_n\}$ and let
		\[\H' = \H \setminus \left(\{\varphi \in \H : \exists u, v \in V (\varphi_X (u) \not\equiv_{R'} \varphi_X(v))\} \cup \bigcup_n \H_n \right).\]

		We claim $\H'$ is tiny, which implies that $\H$ is small. Clearly $\H'$ is Borel, and we claim $\H'(v)$ is $\equiv_{R'}$-locally $G$-independent. Indeed, if $\varphi_n \in \H'$ and $G(((\varphi_n)_X(v))_{n \in \N})$, then there is some $\varphi \in \oplus_v \H$ with $\varphi^n = \varphi_n$ for all $n$. But then $\varphi \in \F^1_n$ for some $n$, so there are $u, w \in V \times \N$ such that $\varphi_X(u) \not\equiv_{R'} \varphi_X(w)$. Since $\varphi^n \in \H'$ for all $n$, we may assume that $u = v^\frown i, w = v^\frown j$ for some $i \neq j$. But then $\varphi^i_X(v) = (\varphi_i)_X(v) \not\equiv_{R'} (\varphi_j)_X(v) = \varphi^j_X(v)$.

		Next suppose $\H \prescript{}{u}{+}^{}_v \H$ is small, write $\H \prescript{}{u}{+}^{}_v \H = \bigcup_{i \in 2, n \in \N} \F^i_n$ with $\F^i_n$ tiny of type $i$, and fix witnesses $R^i_n$ that $\F^i_n$ is tiny of type $i$. As before, we may assume $R^i_n = R'$ for a single $R'$ and we define $\H_n, \H'$ in the same way, so that it suffices again to show that $\H'$ is tiny of type 2.

		Let $\varphi_i \in \H', i \in 2$, and suppose $(\varphi_0)_X(u) R (\varphi_1)_X(v)$. Then there is some $\varphi \in \H \prescript{}{u}{+}^{}_v \H$ with $\varphi^0 = \varphi_0$ and $\varphi^i = \varphi_1$ for $i > 0$. As before, we find that we must have $\varphi_X(u^\frown 0) \not\equiv_{R'} \varphi_X(v^\frown 1)$, so that $(\varphi_0)_X(u) \not\equiv_{R'} (\varphi_1)_X(v)$. Thus, $(\H'(u), \H'(v))$ is $(R \cap \mathord{\equiv_{R'}})$-independent, and by \cref{prop:reflection-quasi-order} we can find Borel sets $\H'(u) \subseteq A, \H'(v) \subseteq B$ such that $A$ is closed upwards for $R \cap \mathord{\equiv_{R'}}$, $B$ is closed downwards for $R \cap \mathord{\equiv_{R'}}$, and $(A, B)$ is $(R \cap \mathord{\equiv_{R'}})$-independent. Then
		\[x Q y \iff x R' y \And (x \equiv_{R'} y \And x \in A \implies y \in A)\]
		is a lexicographically reducible quasi-order containing $R$, and $\H'$ is tiny of type 2 with witness $Q$.
	\end{claimproof}

	If $\hom(\bullet, \bullet; G, R)$ is small, then alternative (1) holds by \cref{claim:dichotomy-small}. Suppose now that $\hom(\bullet, \bullet; G, R)$ is large. We define a sequence $G_n$ of $\aleph_0$-dimensional directed hypergraphs on $\N^n$ and a sequence $H_n$ of directed graphs on $\N^n$ as follows:
	\begin{align*}
		G_n(x_i) \iff& \exists k < n \, \exists s \in (S \cap \N^k) \, \exists u \in \N^{n-k-1} \, \forall i(x_i = s^\frown i^\frown u),\\
		x H_n y \iff& \exists k < n \, \exists (t_0, t_1) \in (T \cap \N^k \times \N^k) \\
		&\qquad\qquad \exists u \in \N^{n-k-1}(x = t_0^\frown 0^\frown u \And y = t_1^\frown 1^\frown y).
	\end{align*}
	In particular, $G_0, H_0$ are the trivial graphs on $V = \{\emptyset\}$, so $\hom(G_0, H_0; G, R) = \hom(\bullet, \bullet; G, R)$ is large.

	Note that if $s \in S \cap \N^n$ then $G_{n+1} = \oplus_s G_n$ and $H_{n+1} = \oplus_s H_n$, and if $(t_0, t_1) \in T \cap \N^n \times \N^n$ then $G_{n+1} = G_n \prescript{}{t_0}{+}^{}_{t_1} G_n$ and $H_{n+1} = H_n \prescript{}{t_0}{+}^{}_{t_1} H_n$. As $(S, T)$ is full, the graphs $(G_{n+1}, H_{n+1})$ can always be constructed from $(G_n, H_n)$ via these operations. We note also that
	\[\G_0^\omega ((x_i)_{i \in \N}) \iff \exists N \forall n \geq N (G_n((\res{x_i}{n})_{i \in \N}))\]
	and
	\[x \bbH_0^\omega y \iff \exists N \forall n \geq N (\res{x}{n} \mathop{H_n} \res{y}{n}),\]
	and $G_n, H_n$ have countably many vertices and edges.

	By \cref{claim:dichotomy-diameter,claim:dichotomy-large}, we can recursively construct a sequence of large Borel sets $\H_n \subseteq \hom(G_n, H_n; G, R)$ such that $\H_{n+1} \subseteq \H_n \oplus_s \H_n$ for $s \in S \cap \N^n$, $\H_{n+1} \subseteq \H_n \prescript{}{t_0}{+}^{}_{t_1} \H_n$ for $(t_0, t_1) \in T \cap \N^n \times \N^n$, $\diam_{d_X}(\H_n(x)) < 2^{-n}$ for all $x \in \alpha(n)^n$, and $\diam_d(\H(e)) < 2^{-n}$ for all $e \in G_n \cup H_n$ with $e_0 \in \alpha(n)^n$, where $e_0$ denotes the first vertex in $e$ (note that for every $n$ there are only finitely many such edges). It follows that $\{f(x)\} = \bigcap_n \overline{\H_n(\res{x}{n})}$ exists and is well defined for $x \in X_\alpha$, and that this map $f: X_\alpha \to X$ is continuous. To see that it is a homomorphism of $\G_0^\omega$ to $G$, suppose $\G_0^\omega((x_i)_{i \in \N})$ and let $N$ be sufficiently large that $G_N((\res{x_i}{N})_{i \in \N})$. Then $\{y\} = \bigcap_{n \geq N} \overline{\H_n((\res{x_i}{n})_{i \in \N})}$ exists and is well defined, and by continuity of $\pi_G$ we have $(f(x_i))_{i \in \N} = \pi_G(y) \in G$. A similar argument shows that $f$ is a homomorphism from $\bbH_0^\omega$ to $R$.
\end{proof}

\begin{rmk}
	The assumption that $(S, T)$ is full in the definition of $\G_0^\omega, \bbH_0^\omega$ is not necessary, and was simply done for notational convenience in the proof of \cref{thm:countable-dimensional-dichotomy}. An inspection of the proof shows that at most one of the alternatives holds when $(S, T)$ is dense, and at least one of them holds when $(S, T)$ is sparse. Moreover, if $(S, T)$ is sparse and $(S', T')$ is dense then there is a continuous embedding of $(\G_S^\omega, \bbH_T^\omega)$ into $(\G_{S'}^\omega, \bbH_{T'}^\omega)$ (such an embedding can be constructed directly, without appealing to \cref{thm:countable-dimensional-dichotomy}). Therefore the choice of a sparse, dense pair $(S, T)$ in the definition of $(\G_0^\omega, \bbH_0^\omega)$ does not matter up to continuous bi-embedability.
\end{rmk}

\medskip

\noindent\tb{(B)} This dichotomy admits the following effective refinement:

\begin{thm}\label{thm:effective-aleph0-dimensional-dichotomy}
	Let $G$ be a $\Sigma^1_1$ $\aleph_0$-dimensional directed hypergraph on a Polish space $X$, and $R$ a $\Sigma^1_1$ partial order on $X$. Then exactly one of the following holds:
	\begin{enumerate}
		\item There is a quasi-order $R'$ on $X$ such that $R \subseteq R'$, there is a countable $\Delta^1_1$ $\equiv_{R'}$-local colouring of $G$, and there is a $\Delta^1_1$ reduction of $R'$ to the lexicographic order $\leq_{\text{lex}}$ on $2^\alpha$, for some $\alpha < \omega_1^{CK}$.

		\item There is a continuous homomorphism from $(\G_0^\omega, \bbH_0^\omega)$ to $(G, R)$.
	\end{enumerate}
\end{thm}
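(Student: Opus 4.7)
My plan is to mirror the proof of \cref{thm:countable-dimensional-dichotomy} step by step, replacing each classical tool by its effective counterpart while carefully tracking the definability of all objects constructed. The classical proof rests on essentially two ingredients: (i) the first reflection theorem, used both inside the definition of ``tiny'' (via \cref{prop:reflection-quasi-order}) and in \cref{claim:dichotomy-large}, and (ii) closure of the ``small'' sets under countable unions (invoked in \cref{claim:dichotomy-small,claim:dichotomy-diameter,claim:dichotomy-large}). For (i) I would substitute the effective first reflection theorem (Moschovakis 4C.10), which produces $\Delta^1_1$ supersets of $\Sigma^1_1$ sets satisfying a $\Pi^1_1$-on-$\Sigma^1_1$ property. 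For (ii) I would replace ``small'' by an effective notion based on $\Pi^1_1$-bounded unions of $\Delta^1_1$-codes for tiny sets.

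\textbf{Key steps.} First, I would redefine \emph{tiny} to require additionally that the witnessing set $\H$, the quasi-order $R'$, and the reduction witnessing lexicographic reducibility of $R'$ to $\leq_{\textrm{lex}}$ on $2^\alpha$ are all $\Delta^1_1$ and that $\alpha < \omega_1^{CK}$; call $\H$ \emph{effectively small} if it lies in the union of a $\Pi^1_1$ collection of $\Delta^1_1$-codes of tiny sets. Second, I would prove the effective analogue of \cref{claim:dichotomy-small}: if $\hom(\cdot,\cdot;G,R)$ (identified with $X$) is effectively small, then by $\Sigma^1_1$-boundedness applied to the $\Pi^1_1$-norm sending each tiny-set code to its witnessing ordinal, the ordinals $\alpha_n$ are uniformly bounded below some $\alpha<\omega_1^{CK}$, and concatenating the reductions yields a single $\Delta^1_1$ reduction of a $\Delta^1_1$ quasi-order $R' \supseteq R$ to $2^\alpha$ together with a countable $\Delta^1_1$ $\equiv_{R'}$-local colouring of $G$, which is exactly alternative (1). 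Third, I would show that the effectively small sets form an appropriate $\sigma$-ideal (with $\Delta^1_1$ unions replacing countable unions) and that \cref{claim:dichotomy-diameter} goes through using a recursive basis of sets of small diameter. Fourth, for \cref{claim:dichotomy-large} I would replace each appeal to the first reflection theorem by its effective version, producing $\Delta^1_1$ (rather than merely Borel) supersets $A_n, B^i_n$; the quasi-orders $Q$ constructed from them remain $\Delta^1_1$ with reductions into $2^\alpha$ for $\alpha < \omega_1^{CK}$. Finally, if $\hom(\cdot,\cdot;G,R)$ is effectively large, I would run the recursive construction of $\H_n \subseteq \hom(G_n,H_n;G,R)$ verbatim as in the original proof, with each $\H_n$ chosen to be $\Delta^1_1$; the resulting continuous map $f\colon X_\alpha \to X$ is a homomorphism from $(G_0^\omega, H_0^\omega)$ to $(G, R)$.

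\textbf{Main obstacle.} The principal technical difficulty is controlling the ordinals. Specifically, I need to verify that the ``tiny'' notion can be arranged so that the ordinal $\alpha$ associated to a given $\Delta^1_1$-code is computable from that code by a $\Pi^1_1$-norm, and that the effective first reflection theorem, when applied inside the proof of \cref{claim:dichotomy-large}, does not cause the ordinals witnessing lexicographic reducibility to jump above $\omega_1^{CK}$. This should be handled by the standard device of coding lexicographic reductions by $\Delta^1_1$ well-founded trees of recursive rank and applying Kreisel-style boundedness to the resulting ranks. Once this bookkeeping is in place, every other step is a routine effectivization of the argument in \cref{thm:countable-dimensional-dichotomy}.
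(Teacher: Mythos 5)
Your proposal matches the paper's proof in all essentials: effectivize the definition of tiny (requiring $\Delta^1_1$ witnesses and $\alpha<\omega_1^{CK}$), use the effective first reflection theorem in \cref{prop:reflection-quasi-order} and \cref{claim:dichotomy-large}, make the covering of a small $\Delta^1_1$ set by tiny sets uniformly $\Delta^1_1$, and run the recursive construction unchanged. The only cosmetic difference is that the paper obtains the uniform covering via $\Pi^1_1$ number uniformization followed by Lusin separation on a nice coding of $\Delta^1_1$ sets (its \cref{lem:effective-aleph-0-dimensional-dichotomy-key-lemma}), whereas you phrase it via $\Pi^1_1$ collections of codes and $\Sigma^1_1$-boundedness for the associated ordinals; these are interchangeable standard devices and both correctly address the ordinal-bounding issue you flag.
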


To prove this, we make the following modifications to the proof of \cref{thm:countable-dimensional-dichotomy}. First, we choose $\pi_G, \pi_H$ to be computable (restricting their domains appropriately to $\Pi^0_1$ sets). We then replace ``Borel'' with ``$\Delta^1_1$'' and ``lexicographically reducible'' with ``admitting a $\Delta^1_1$ reduction to $\leq_{\text{lex}}$ on $2^\alpha$, for some $\alpha < \omega_1^{CK}$'' in the definition of tiny sets.

We now have the following:

\begin{lem}\label{lem:effective-aleph-0-dimensional-dichotomy-key-lemma}
	Let $V$ be a set, $H_0$ be an $\aleph_0$-dimensional directed hypergraph on $V$ with edge set $E_0$ and $H_1$ be a directed graph on $V$ with edge set $E_1$, with $V, E_0, E_1$ countable. Suppose $\H \subseteq \hom(H_0, H_1; G, R)$ is small and $\Delta^1_1$. Then one can find:

	(1) a uniformly $\Delta^1_1$ sequence of tiny sets $(\F^i_n)_{i \in 2, n \in \N}$ covering $\H$,

	(2) a uniformly $\Delta^1_1$ sequence $(R^i_n)_{i \in 2, n \in \N}$ of quasi-orders on $\N$,

	(3) a uniformly $\Delta^1_1$ sequence of ordinals $\alpha^i_n < \omega_1^{CK}$,

	(4) a uniformly $\Delta^1_1$ sequence $(f^i_n)_{i \in 2, n \in \N}$ of maps $f^i_n: \N^\N \to 2^{\alpha^i_n}$, and

	(5) a uniformly $\Delta^1_1$ sequence $v_n \in V$,

	\noindent such that the sets $\F^i_n$ are pairwise disjoint, $R \subseteq R^i_n$ for all $i, n$, each $f^i_n$ is a reduction of $R^i_n$ to $\leq_{\text{lex}}$ on $2^{\alpha^i_n}$, $\F^0_n(v_n)$ are $\equiv_{R^0_n}$-locally $G$-independent, and $\forall \varphi \in \H \exists u, v \in V (\varphi_X(u) \not\equiv_{R^1_n} \varphi_X(v))$.
\end{lem}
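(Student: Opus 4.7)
The plan is to run through the proof of \cref{claim:dichotomy-small} in a uniformly effective manner, tracking the definability of each choice. The main tool is the $\Pi^1_1$-enumeration of $\Delta^1_1$ sets (cf.\ \cite[4D.2]{Mo}), combined with the standard fact that the set of $\Delta^1_1$ codes is $\Pi^1_1$.

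\textbf{Step 1: the space of tiny codes.} First I would show that the set $\bbT$ of codes $\langle a, i, v, r, b, g\rangle$ such that $a$ is a $\Delta^1_1$ code for a set $\F_a \subseteq \hom(H_0, H_1; G, R)$, $r$ is a $\Sigma^1_1$ code for a quasi-order $R_r \supseteq R$, $b$ is a $\Delta^1_1$ code for a wellorder on $\bbN$ of order type $\alpha_b < \omega_1^{CK}$, $g$ is a $\Delta^1_1$ code for a function $f_g\colon \bbN^\bbN \to 2^{\alpha_b}$ reducing $R_r$ to $\leq_{\text{lex}}$, and either ($i=0$, $v \in V$, $\F_a(v)$ is $\equiv_{R_r}$-locally $G$-independent) or ($i=1$, $\forall \phi \in \F_a \exists u, w \in V(\phi_X(u) \not\equiv_{R_r} \phi_X(w))$), is $\Pi^1_1$. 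Each clause is $\Pi^1_1$ by direct inspection, using that $G, R$ are $\Sigma^1_1$, that being a wellorder is $\Pi^1_1$, and that the $\Delta^1_1$ codes form a $\Pi^1_1$ set.

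\textbf{Step 2: covering and extraction.} Smallness of $\H$ gives a countable cover by tiny sets; intersecting each with $\H$ preserves both tininess (subsets of tiny sets are tiny with the same witnesses) and $\Delta^1_1$-ness, so we may assume these sets are contained in $\H$. Under the effective definition of ``tiny'' in \cref{thm:effective-aleph0-dimensional-dichotomy}, each such set has $\Delta^1_1$ witnesses built in, and is therefore coded by some $c \in \bbT$. By the $\Pi^1_1$-enumeration theorem applied to $\bbT$, there is a uniformly $\Delta^1_1$ sequence $(c_n)_{n \in \bbN}$ enumerating $\bbT$. From this sequence I would read off the data $(\F^i_n, R^i_n, \alpha^i_n, f^i_n, v_n)$ indexed by type $i \in \{0,1\}$ and the value of $n$, splitting according to the type coordinate. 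Pairwise disjointness is then enforced by replacing each $\F^i_n$ with $\F^i_n \setminus \bigcup_{(j,m) < (i,n)} \F^j_m$ under a fixed enumeration of $2 \times \bbN$; this operation preserves uniform $\Delta^1_1$-ness and preserves tininess with the same witnesses.

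The main obstacle is Step 2, namely obtaining all five witness coordinates simultaneously in a uniformly $\Delta^1_1$ way. The $\Pi^1_1$-enumeration theorem applied to the tuple-coded space $\bbT$ is what makes this possible, but some care is required in recovering the ordinal $\alpha^i_n$ from its wellorder code $b(c_n)$; this is handled uniformly via the standard $\Delta^1_1$ assignment of ranks to $\Delta^1_1$ wellorderings on $\bbN$. A small additional point is the need to separately confirm, in Step 1, that ``$f_g$ reduces $R_r$ to $\leq_{\text{lex}}$ on $2^{\alpha_b}$'' is genuinely $\Pi^1_1$ uniformly in the codes, which follows because the predicate $u \leq_{\text{lex}} v$ on $2^{\alpha_b}$ can be computed uniformly from $b$ within a $\Pi^1_1$ framework.
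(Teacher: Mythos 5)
Your Step 1 and your final disjointification agree with the paper's argument: the assertion that a tuple of codes witnesses tininess of a given type is $\Pi^1_1$ uniformly in the codes (the paper phrases this as being ``$\Pi^1_1$-on-$\Delta^1_1$''), and pairwise disjointness is arranged afterwards by a recursive modification. The gap is in Step 2, where you invoke ``the $\Pi^1_1$-enumeration theorem applied to $\bbT$'' to produce a uniformly $\Delta^1_1$ sequence enumerating $\bbT$. No such theorem exists: if a set of integer codes were the range of a total $\Delta^1_1$ function on $\bbN$ it would be $\Sigma^1_1$, hence $\Delta^1_1$; but your $\bbT$ is built from the set of $\Delta^1_1$ codes, which is properly $\Pi^1_1$, so $\bbT$ is in general not $\Delta^1_1$ and admits no $\Delta^1_1$ enumeration. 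More locally, smallness of $\H$ only gives the \emph{existence} of countably many codes in $\bbT$ whose tiny sets cover $\H$; an arbitrary countable subset of a $\Pi^1_1$ set need not be uniformly $\Delta^1_1$, so existence alone does not yield the required uniformity.

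The paper closes exactly this gap with a selection-plus-separation step that your proposal is missing. Consider the $\Pi^1_1$ relation ``$\phi \notin \H$, or $c \in \bbT$ and $\phi \in \F_c$'' on $\hom(H_0, H_1; G, R) \times \bbN$; since $\H$ is small, every $\phi \in \H$ lies in some coded tiny set, so by the number uniformization property for $\Pi^1_1$ there is a $\Pi^1_1$, hence (as $\H$ is $\Delta^1_1$) $\Delta^1_1$, map $g: \H \to \bbN$ selecting such a code for each $\phi$. The image $g(\H)$ is then $\Sigma^1_1$ and contained in the $\Pi^1_1$ set $\bbT$, so by the Lusin separation theorem there is a $\Delta^1_1$ set $A$ with $g(\H) \subseteq A \subseteq \bbT$. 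Now $A$ is a $\Delta^1_1$ set of integer codes, every element of which is a genuine witness, and the associated tiny sets still cover $\H$; a $\Delta^1_1$ enumeration of $A$, followed by reading off the coordinates, gives (1)--(5). With this replacement your outline is correct; the remaining points you flag (recovering $\alpha^i_n$ from wellorder codes, the disjointification) go through as you describe.
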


\begin{proof}[Proof sketch]
	Fix a nice coding $D \ni n \mapsto D_n$ of the $\Delta^1_1$ sets (c.f. \cref{sec:proof-equivalence-smooth} \tb{(D)}). The property of the tuple $(\F, R', \alpha, f, v, i)$ that ``$f: X \to 2^\alpha$ is a reduction of $R'$ to $\leq_{\text{lex}}$ and $R'$ is a witness that $\F$ is tiny of type $i$ (via $v \in V$ if $i = 1$)'' is $\Pi^1_1$-on-$\Delta^1_1$. It follows that the relation $P(\varphi, n) \iff$ ``$\varphi \notin \H$ or $n \in D$ codes such a tuple with $\varphi \in \F$'' is $\Pi^1_1$, and hence by the Number Uniformization Theorem for $\Pi^1_1$ \cite[4B.4]{Mo} there is a $\Delta^1_1$ map $g: \H \to D$ taking each $\varphi \in \H$ to the code of such a tuple. The image of $\H$ under $g$ is $\Sigma^1_1$, and the set of all $n \in D$ coding tuples as above is $\Pi^1_1$, so by $\Sigma^1_1$ separation there is a $\Delta^1_1$ set $A \subseteq D$ containing $g(\H)$ and such that every element in $A$ codes a tuple as above. One can then fix a $\Delta^1_1$ enumeration of $A$, which satisfies all of the above conditions except maybe pairwise disjointness of the family $\F^i_n$, and this can be fixed by a straightforward recursive construction.
\end{proof}

The effective analogue of \cref{claim:dichotomy-small} follows immediately. We note that the First Reflection Theorem is effective enough that the proof of \cref{prop:reflection-quasi-order} is effective as well. \Cref{claim:dichotomy-large} then follows using \cref{lem:effective-aleph-0-dimensional-dichotomy-key-lemma}. The rest of the proof is identical to that of \cref{thm:countable-dimensional-dichotomy}.

\subsection{Proof of \texorpdfstring{\cref{thm:ben-dichotomy}}{Theorem~\ref*{thm:ben-dichotomy}} from the \texorpdfstring{$\aleph_0$}{aleph0}-dimensional \texorpdfstring{$(\G_0, \bbH_0)$}{(G0, H0)} dichotomy}\label{sec:proof-of-ben-2}

\noindent\tb{(A)} As in \cref{sec:proof-of-ben-1}, the two alternatives are mutually exclusive. To see that at least one of them holds, define the $\aleph_0$-dimensional hypergraph $G$ on $X$ by $G(x_n) \iff \bigcap_n P_{x_n} = \emptyset$. By Lusin--Novikov, $G$ is Borel. We now apply \cref{thm:countable-dimensional-dichotomy} to $(G, E)$, and consider the two cases.

\textbf{Case 1:} There is a lexicographically reducible quasi-order $R$ containing $E$ and a countable Borel $\equiv_R$-local colouring of $G$. Let $F = \mathord{\equiv_R}$, so that $E \subseteq F$ and $F$ is smooth. Let $G'$ be the $\aleph_0$-dimensional graph consisting of $F$-equivalent sequences in $G$, so that a countable Borel $F$-local colouring of $G$ is exactly a countable Borel colouring of $G'$.

By the proof of \cref{prop:smooth-refinement}, there is a smooth Borel equivalence relation $E \subseteq F' \subseteq F$ such that $\bigcap_{x \in C} P_x \neq \emptyset$ for every $F'$-class $C$, and hence $P$ admits a Borel $F'$-invariant uniformization by \cref{thm:unif-from-smooth-4}.

\textbf{Case 2:} There is a continuous homomorphism $\pi: X_\alpha \to X$ from $(\G_0^\omega, \bbH_0^\omega)$ to $(G, E)$. We will show that (4) holds in \cref{prop:dichotomy-equivalent-conditions}, from which it follows that alternative (2) of \cref{thm:ben-dichotomy} holds. To see this, consider $F = (\pi \times \pi)^{-1}(E)$ and $R = (\pi \times \pi)^{-1}(R')$, where $x R' x' \iff P_x \cap P_{x'} = \emptyset$. Note that $R'$ is Borel by Lusin--Novikov, and hence so is $R$. Also, $\bbH_0^\omega \subseteq F$ and $F \cap R = \emptyset$.

We claim that $R$ is comeager. To see this, fix $x \in X_\alpha$ and consider
\[R_x^c = \{x' \in X_\alpha : P_{\pi(x)} \cap P_{\pi(x')} \neq \emptyset\}.\]
Fix an enumeration $y_n$ of $P_{\pi(x)}$, and let $A_n = \{x' \in X_\alpha : y_n \in P_{\pi(x')}\}$. Then each $A_n$ is $\G_0^\omega$-independent and hence meager; thus so is $R_x^c = \bigcup_n A_n$. Thus $R_x$ is comeager for all $x \in X_\alpha$, and by Kuratowski--Ulam $R$ is comeager.

One can now recursively construct a continuous homomorphism $f: 2^\omega \to X_\alpha$ from $(\Delta(2^\omega)^c, \E_0^c, \E_0)$ to $((\pi \times \pi)^{-1}(\Delta(X))^c, R, F)$; see e.g. the proof of \cite[Proposition~11]{Mi3}. Then $\pi \circ f$ satisfies (4).

\medskip

\noindent\tb{(B)} We note the following effective version of \cref{thm:ben-dichotomy}:

\begin{thm}\label{thm:effective-ben-dichotomy}
	Let $E$ be a $\Delta^1_1$ equivalence relation on $\N^\N$ and $P \subseteq \N^\N \times \N^\N$ an $E$-invariant $\Delta^1_1$ relation with countable non-empty sections. Then exactly one of the following holds:
	\begin{enumerate}
		\item There is a $\Delta^1_1$ $E$-invariant uniformization,

		\item There is a continuous embedding $\pi_X\colon 2^\N \times \N \to X$ of $\E_0\times I_\N$ into $E$ and a continuous injection $\pi_Y\colon 2^\N \times \N \to Y$ such that for all $x, x' \in 2^\N \times \N$,
		\[\lnot (x \mathrel{\E_0 \times I_\N} x') \implies P_{\pi_X(x)} \cap P_{\pi_X(x')} = \emptyset\]
		and
		\[P_{\pi_X(x)} = \pi_Y([x]_{\E_0 \times I_\N}).\]
	\end{enumerate}
\end{thm}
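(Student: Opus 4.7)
The plan is to follow the preceding proof of \cref{thm:ben-dichotomy} (Part~(A)) verbatim, replacing each invocation of a classical tool with its effective counterpart and applying \cref{thm:effective-aleph0-dimensional-dichotomy} in place of \cref{thm:countable-dimensional-dichotomy}. As in the classical case, the two alternatives are mutually exclusive. Define the $\aleph_0$-dimensional directed hypergraph $G$ on $\bbN^\bbN$ by $G((x_n)) \iff \bigcap_n P_{x_n} = \emptyset$. By effective Lusin-Novikov, fix $\Delta^1_1$ functions $g_n$ with $P = \bigcup_n \operatorname{graph}(g_n)$, so that $P_{x_0} = \{g_k(x_0) : k \in \bbN\}$; then
\[G((x_n)) \iff \forall k\, \exists n\, \bigl(g_k(x_0) \notin P_{x_n}\bigr),\]
showing that $G$ is $\Delta^1_1$. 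We then apply \cref{thm:effective-aleph0-dimensional-dichotomy} to $(G, E)$.

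If alternative (2) of \cref{thm:effective-aleph0-dimensional-dichotomy} holds, there is a continuous homomorphism $\pi$ from $(G_0^\omega, H_0^\omega)$ to $(G, E)$, and the construction in Part~(A) produces continuous maps witnessing condition (4) of \cref{prop:dichotomy-equivalent-conditions}. That proposition then upgrades these to maps witnessing alternative (2) of the current theorem. This step relies only on continuous maps and Baire category, so no further effectivity considerations are required.

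If alternative (1) of \cref{thm:effective-aleph0-dimensional-dichotomy} holds, we obtain a quasi-order $R' \supseteq E$, a $\Delta^1_1$ reduction of $R'$ to $\leq_{\textrm{lex}}$ on $2^\alpha$ for some $\alpha < \omega_1^{CK}$, and a countable $\Delta^1_1$ $\equiv_{R'}$-local colouring $c$ of $G$. Set $F = \mathop{\equiv_{R'}}$: this is a $\Delta^1_1$ smooth equivalence relation containing $E$. We aim to refine $F$ to a $\Delta^1_1$ smooth $F' \supseteq E$ such that $\bigcap_{x \in C} P_x \neq \emptyset$ for every $F'$-class $C$, and then apply the effective version of \cref{thm:unif-from-smooth-4}. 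The proof of \cref{thm:unif-from-smooth-4} is already effective, relying only on effective Lusin-Novikov, $\Pi^1_1$-number uniformization, Lusin separation, and \cite[18.9]{DST}, so it yields a $\Delta^1_1$ $F'$-invariant---hence $E$-invariant---uniformization of $P$.

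The main obstacle is producing the smooth refinement $F'$ effectively. Classically one observes that $E$-saturation preserves $F$-local $G$-independence (since $P$ is $E$-invariant) and then applies first reflection to enlarge each $c^{-1}(n)$ to an $E$-invariant $F$-locally $G$-independent Borel set. For the effective version, we use effective first reflection: the property ``$B$ is $F$-locally $G$-independent'' is hereditary and $\Pi^1_1$-on-$\Sigma^1_1$, so any $\Sigma^1_1$ $F$-locally $G$-independent set is contained in a $\Delta^1_1$ one. Alternating reflection with $E$-saturation, and using $\Pi^1_1$-number uniformization to make the choices uniform, one obtains a uniformly $\Delta^1_1$ sequence $(B_n)$ of $E$-invariant $F$-locally $G$-independent sets covering $\bbN^\bbN$, with the iteration bounded below $\omega_1^{CK}$ by effective boundedness. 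Refining $F$ accordingly---declaring $x\, F'\, x'$ iff $x\, F\, x'$ and $\{n : x \in B_n\} = \{n : x' \in B_n\}$---yields a $\Delta^1_1$ smooth $F' \supseteq E$ whose classes are $G$-independent, completing the proof.
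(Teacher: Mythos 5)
Your proposal is correct and takes essentially the same route as the paper, which proves this theorem simply by citing the preceding proof of \cref{thm:ben-dichotomy}, \cref{thm:effective-aleph0-dimensional-dichotomy}, and the effectiveness of the proof of \cref{thm:unif-from-smooth-4}. Your write-up fills in exactly the details the paper leaves implicit (the $\Delta^1_1$-ness of the hypergraph $G$ via effective Lusin--Novikov, the uniform effective reflection needed to produce the $E$-invariant refinement in Case 1, and the observation that Case 2 needs no effectivization since alternative (2) is a boldface statement), all of which are sound.
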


This follows from the above proof, \cref{thm:effective-aleph0-dimensional-dichotomy}, and the fact that the proofs of \cref{prop:smooth-refinement,thm:unif-from-smooth-4} are effective.

\subsection{Proof of \texorpdfstring{\cref{thm:invariant-lusin-novikov}}{Theorem~\ref*{thm:invariant-lusin-novikov}}}\label{sec:invariant-lusin-novikov}

\noindent\tb{(A)} Note first that (1) is equivalent to the existence of a smooth Borel equivalence $F \supseteq E$ for which $P$ is $F$-invariant. Indeed, if $F$ exists then (1) holds by \cref{thm:smooth-lusin-novikov}. On the other hand, if (1) holds and is witnessed by $g_n$ then the equivalence relation $x F x' \iff \forall n (g_n(x) = g_n(x'))$ is a smooth Borel equivalence relation containing $E$ for which $P$ is invariant.

To see that the two alternatives are mutually exclusive, let $F \supseteq E$ be smooth so that $P$ is $F$-invariant and suppose that $\pi_X, \pi_Y$ witness (2). Then there is a comeager $\E_0$-invariant set $C$ that $\pi_X$ maps into a single $F$-class, so $\pi_Y(C)$ is contained in a single $P$-section, a contradiction.

Now define the graph $x G x' \iff P_x \neq P_{x'}$. This graph is Borel by Lusin--Novikov. We apply the $(\G_0, \bbH_0)$ dichotomy to $(G, E)$ and consider the two alternatives:

\textbf{Case 1:} There is a smooth $F \supseteq E$ such that $G \cap F$ admits a countable Borel colouring.

If $A$ is analytic and $G \cap F$-independent then so is $[A]_E$, and by the First Reflection Theorem any $G \cap F$-independent analytic set is contained in an $G \cap F$-independent Borel set. We may therefore construct an increasing sequence $B_n$ of $G \cap F$-independent Borel sets with $A \subseteq B_0$ and $[B_n]_E \subseteq B_{n+1}$. Then $B = \bigcup_n B_n$ is Borel, $E$-invariant, $G \cap F$-independent, and contains $A$.

We may therefore fix a sequence $B_n$ of Borel, $E$-invariant, $G \cap F$-independent sets covering $X$. If we let $x F' x' \iff x F x' \And \forall n (x \in B_n \iff x' \in B_n)$, then $E \subseteq F' \subseteq F$, $F$ is a smooth Borel equivalence relation, and $G \cap F' = \emptyset$. Thus (1) holds.

\textbf{Case 2:} There is a continuous homomorphism $\varphi: 2^\N \to X$ from $(\G_0, \bbH_0)$ to $(G, E)$. Define $R(x, y) \iff P(\varphi(x), y)$, and let
\[Q(x, y) \iff R(x, y) \And \forall^* x' \lnot R(x', y),\]
where $\forall^* x A(x)$ means $A$ is comeager for $A \subseteq 2^\N$. Let $A = \proj_{2^\N}(Q)$ and $x S x' \iff Q_x \cap Q_{x'} \neq \emptyset$. Then $R$ is Borel with countable sections, and it follows by Lusin--Novikov that $Q, A, S$ are Borel as well. Additionally, $R$ is $F$-invariant, where $F = (\varphi \times \varphi)^{-1}(E)$, and hence so are $Q, A, S$. In particular, $F \cap S = \emptyset$.

\begin{claim}
	$A$ is comeager and $S$ is meager.
\end{claim}

\begin{claimproof}
	We show first that $S$ is meager. By Kuratowski--Ulam, it suffices to show that $S$ has meager sections. Now if $x \notin A$ then $Q_x = \emptyset$, so $S_x = \emptyset$ as well. On the other hand, if $x \in A$ and $y \in Q_x$ then $R^y = \{x' : R(x', y)\}$ is meager, so $\{x' : \exists y \in Q_x (R(x', y))\}$ is meager. But this set contains $S_x$, so $S_x$ is meager as well.

	Note that $\bbH_0 \subseteq F$, so by \cite[Lemma~4]{Mi3} any non-meager $F$-invariant Borel set is comeager. Define now $x B x' \iff R_x \subseteq R_{x'}$, and note that $B$ is Borel by Lusin--Novikov. We claim that $x \in A$ iff $B_x$ is meager. To see this, suppose first that $x \in A$. Then $R^y$ is meager for some $y \in R_x$, and hence so is $B_x = \bigcap_{y \in R_x} R^y$. Conversely, if $B_x$ is meager then $R^y$ is not comeager for some $y \in R_x$. But $R^y$ is $F$-invariant, hence meager, so $y \in Q_x$ and $x \in A$. Thus, by Kuratowski--Ulam, in order to show that $A$ is comeager it suffices to show that $B$ is meager.

	Suppose towards a contradiction that $B$ is non-meager. Let $C$ be the set of all $x$ for which $B_x$ is non-meager. Then $C$ is Borel and $F$-invariant, and by Kuratowski--Ulam it is non-meager, so $C$ is comeager. Similarly, if $x \in C$ then $B_x$ is Borel, $F$-invariant and non-meager, so $B_x$ is comeager for $x \in C$. It follows by Kuratowski--Ulam that $B$ is comeager.

	Let now $x B' x' \iff x B x' \And x' B x \iff R_x = R_{x'}$. As $B$ is comeager so too is $B'$, and hence there is some $x$ for which $D = (B')_x$ is comeager. For all $x', x'' \in D$ we have $R_{x'} = R_x = R_{x''}$, so $x' \cancel{\G_0} x''$ and $D$ is $\G_0$-independent. But every $\G_0$-independent Borel set is meager by \cite[Proposition~6.2]{KST}, a contradiction.
\end{claimproof}

By \cite[Proposition~11]{Mi3}, we may find a continuous homomorphism $\psi: 2^\N \to A$ from $(\E_0, \E_0^c)$ to $(F, S^c)$ such that $\varphi \circ \psi: 2^\N \to X$ is injective. Now the set $Q'(x, y) \iff Q(\psi(x), y)$ has countable sections, so by Lusin--Novikov it admits a Borel uniformization $g$.

Since $\psi$ is a homomorphism from $\E_0^c$ to $S^c$, $g$ is countable-to-one, so by Lusin--Novikov there is a Borel non-meager set $B$ on which $g$ is injective. By \cite[8.38]{CDST} we may assume that $\res{g}{B}$ is continuous as well, and by the proof of \cref{claim:embed-E0-into-itself} we may fix a continuous embedding $\tau: 2^\N \to B$ of $\E_0$ into $\res{\E_0}{B}$. Then $\pi_X = \varphi \circ \psi \circ \tau$, $\pi_Y = g \circ \tau$ satisfy (2) of \cref{thm:invariant-lusin-novikov}.

\begin{rmk}
	This proof actually shows that in case (2), we can take $\pi_X, \pi_Y$ so that additionally $\pi_Y(x) \in P_{\pi_X(x')} \iff x \E_0 x'$.
\end{rmk}

\medskip

\noindent\tb{(B)} Using \cref{thm:effective-aleph0-dimensional-dichotomy}, this proof gives the following effective analogue of \cref{thm:invariant-lusin-novikov}:

\begin{thm}
	Let $E$ be a $\Delta^1_1$ equivalence relation on $X$ and $P \subseteq \N^\N \times \N^\N$ an $E$-invariant $\Delta^1_1$ relation with countable non-empty sections. Then exactly one of the following holds:

	(1) There is a uniformly $\Delta^1_1$ sequence $g_n: X \to Y$ of $E$-invariant uniformizations with $P = \bigcup_n \graph(g_n)$,

	(2) There is a continuous embedding $\pi_X: 2^\N \to X$ of $\E_0$ into $E$ and a continuous injection $\pi_Y: 2^\N \to Y$ such that for all $x \in 2^\N$, $P(\pi_X(x), \pi_Y(x))$.
\end{thm}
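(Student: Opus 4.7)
The plan is to replay the proof of Theorem~\ref{thm:invariant-lusin-novikov} and verify that each step can be made effective, using an effective $(G_0, H_0)$ dichotomy (a straightforward specialization of Theorem~\ref{thm:effective-aleph0-dimensional-dichotomy}), the effective Lusin--Novikov theorem, and the effective first reflection theorem.

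First I would note that the graph $x G x' \iff P_x \neq P_{x'}$ is $\Delta^1_1$ by effective Lusin--Novikov, and apply the effective $(G_0, H_0)$ dichotomy to $(G, E)$. In Case 1, one obtains a $\Delta^1_1$ quasi-order $R \supseteq E$ admitting a $\Delta^1_1$ reduction to $\leq_{\text{lex}}$ on $2^\alpha$ for some $\alpha < \omega_1^{CK}$, together with a countable $\Delta^1_1$ $\equiv_R$-local colouring of $G$. Since $P$ is $E$-invariant, the $E$-saturation of any $\equiv_R$-locally $G$-independent set remains $\equiv_R$-locally $G$-independent; applying the effective first reflection theorem uniformly to each colour class, we may assume each class is $\Delta^1_1$ and $E$-invariant. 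Refining $F := \mathord{\equiv_R}$ by this colouring yields a $\Delta^1_1$ smooth equivalence relation $F \supseteq E$ for which $P$ is $F$-invariant. The effective version of Theorem~\ref{thm:smooth-lusin-novikov}(1) --- whose proof reduces to producing a uniformization on the $F$-quotient and then lifting via $S$, using the effective Lusin--Novikov theorem --- then yields the desired uniformly $\Delta^1_1$ sequence $g_n$ of $F$-invariant (hence $E$-invariant) uniformizations with $P = \bigcup_n \mathop{\rm graph}(g_n)$.

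In Case 2, there is a continuous homomorphism $\phi: 2^\bbN \to X$ of $(G_0, H_0)$ to $(G, E)$, and the entire non-effective argument from part (A) of the proof of Theorem~\ref{thm:invariant-lusin-novikov} carries over verbatim: the sets $R, Q, A, S$ are Borel for the same reasons, $A$ is comeager and $S$ meager by the same Kuratowski--Ulam computation, the recursive construction from \cite[Proposition~11]{Mi3} produces a continuous $\psi$, and Lusin--Novikov applied to a non-meager set on which the uniformization of $Q'$ is continuous and injective gives the continuous $\pi_X, \pi_Y$. Since alternative (2) only asks for continuous maps, no further definability tracking is needed here.

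The main obstacle is Case 1: one must ensure that the $\Delta^1_1$-ness is genuinely \emph{uniform} in $n$ throughout, namely that the colouring produced by Theorem~\ref{thm:effective-aleph0-dimensional-dichotomy} is uniformly $\Delta^1_1$ (as stated), that the effective first reflection theorem can be applied simultaneously in $n$ to the colour classes while preserving this uniformity, and that the effective proof of Theorem~\ref{thm:smooth-lusin-novikov}(1) produces a uniformly $\Delta^1_1$ sequence of $g_n$ rather than a sequence of individually $\Delta^1_1$ functions. Each of these is routine from standard effective descriptive set theory, but needs to be verified in sequence.
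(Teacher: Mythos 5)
Your proposal is correct and matches the paper's intended argument: the paper itself disposes of this theorem in one line by saying the proof of Theorem~\ref{thm:invariant-lusin-novikov} ``can also be made effective, by \cref{thm:effective-aleph0-dimensional-dichotomy}," and your write-up is exactly the expansion of that remark (effective dichotomy plus effective reflection and effective Lusin--Novikov in Case 1, verbatim reuse of the Baire-category argument in Case 2 since alternative (2) only demands continuity).
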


\subsection{Proofs of \texorpdfstring{\cref{prop:complexity-countable-sections,thm:complexity-large-sections}}{Proposition~\ref*{prop:complexity-countable-sections}~and~Theorem~\ref*{thm:complexity-large-sections}}}\label{sec:hardness-proofs}

Let us fix a parametrization of the Borel relations on $\N^\N$, as in \cite[Section~5]{AK} (see also \cite[Section~3H]{Mo}). This consists of a set $\bbD \subseteq 2^\N$ and two sets $S, P \subseteq (\N^\N)^3$ such that
\begin{enumerate}[label=(\roman*)]
	\item $\bbD$ is $\Pi^1_1$, $S$ is $\Sigma^1_1$ and $P$ is $\Pi^1_1$;
	\item for $d \in \bbD$, $S_d = P_d$, and we denote this set by $\bbD_d$;
	\item every Borel set in $(\N^\N)^2$ appears as $\bbD_d$ for some $d \in \bbD$; and
	\item if $B \subseteq X \times (\N^\N)^2$ is Borel, $X$ a Polish space, there is a Borel function $p: X \to 2^\N$ so that $B_x = \bbD_{p(x)}$ for all $x \in X$.
\end{enumerate}

Define
\[\calP = \{(d, e) : \text{$\bbD_d$ is an equivalence relation on $\N^\N$ and $\bbD_e$ is $\bbD_d$-invariant}\},\]
and let $\calP^{unif}$ denote the set of pairs $(d, e) \in \calP$ for which $\bbD_e$ admits a $\bbD_d$-invariant uniformization. More generally, for any set $A$ of properties of sets $P \subseteq \N^\N \times \N^\N$, let $\calP_A$ (resp. $\calP_A^{unif}$) denote the set of pairs $(d, e)$ in $\calP$ (resp. $\calP^{unif}$) such that $\bbD_e$ satisfies all of the properties in $A$. It is straightforward to check that $\calP$ is $\bm{\Pi^1_1}$, and that $\calP_A^{unif}$ is $\bm{\Sigma^1_2}$ whenever $\calP_A$ is $\bm{\Sigma^1_2}$. (Note that ``$\bbD_f$ is a function'' is a $\bm{\Pi^1_1}$ property of $f$ by \cref{thm:Ksigma-delta11-point,thm:restricted-quantification}; see also the paragraph before the proof of \cref{thm:turbulence-example}.)

We are interested in properties asserting that $\bbD_e$, or its sections, are (a) comeager, non-meager, $\mu$-positive or $\mu$-conull, where $\mu$ varies over probability Borel measures on $\N^\N$; (b) nonempty and either countable or $K_\sigma$; or (c) $F_\sigma$ or $G_\delta$. For all sets $A$ of these properties, $\calP_A$ is $\bm{\Pi^1_1}$ and $\calP^{unif}_A$ is $\bm{\Sigma^1_2}$. (For properties in (a) this follows from \cite[32.4]{CDST}; for (b) we use \cite[4F.10, 4F.18]{Mo} and \cref{thm:Ksigma-delta11-point,thm:restricted-quantification}; and for (c) we apply Louveau's Theorem \cite[Theorem~A]{Louveau-separation}, i.e., the effective generalization of \cref{thm:Hurewicz}.)

Let $\calP_{ctble}$ (resp. $\calP_{ctble}^{unif}$) denote $\calP_A$ (resp. $\calP_A^{unif}$) for $A$ consisting of the property that $P$ has countable nonempty sections. By \cref{thm:effective-ben-dichotomy}, we can bound the complexity of $\calP_{ctble}^{unif}$ further:

\begin{prop}[\cref{prop:complexity-countable-sections}]
	The set $\calP_{ctble}^{unif}$ is $\bm{\Pi^1_1}$.
\end{prop}

\begin{proof}
	By \cref{thm:effective-ben-dichotomy}, $(d, e) \in \calP^{unif}_{ctble}$ iff $(d, e) \in \calP_{ctble}$ and there exists a $\Delta^1_1(d, e)$ function $f$ which is a $\bbD_d$-invariant uniformization of $\bbD_e$. The assertion that a $\Delta^1_1(d, e)$ function $f$ is a $\bbD_d$-invariant uniformization of $\bbD_e$ is $\Pi^1_1(d, e)$, so $\calP^{unif}_{ctble}$ is $\Pi^1_1$ by \cref{thm:restricted-quantification}.
\end{proof}

Recall that a set $B$ in a Polish space $X$ is called \tb{$\bm{\Sigma^1_2}$-complete} if it is $\bm{\Sigma^1_2}$, and for all zero-dimensional Polish spaces $Y$ and $\bm{\Sigma^1_2}$ sets $C \subseteq Y$ there is a continuous function $f: Y \to X$ such that $C = f^{-1}(B)$. Note that by \cite[Theorem~2]{Sa} one could equivalently take $f$ to be Borel in this definition (see also \cite{P}).

The following computes the exact complexity of the sets $\calP_A^{unif}$, when $A$ asserts that $\bbD_e$ has ``large'' sections.

\begin{thm}[\cref{thm:complexity-large-sections}]
	The set $\calP_A^{unif}$ is $\bm{\Sigma^1_2}$-complete, where $A$ is one of the following sets of properties of $P \subseteq \N^\N \times \N^\N$:
	\begin{enumerate}
		\item $P$ has non-meager sections;
		\item $P$ has non-meager $G_\delta$ sections;
		\item $P$ has non-meager sections and is $G_\delta$;
		\item $P$ has $\mu$-positive sections for some probability Borel measure $\mu$ on $\N^\N$;
		\item $P$ has $\mu$-positive $F_\sigma$ sections for some probability Borel measure $\mu$ on $\N^\N$;
		\item $P$ has $\mu$-positive sections for some probability Borel measure $\mu$ on $\N^\N$ and is $F_\sigma$.
	\end{enumerate}
	The same holds for comeager instead of non-meager, and $\mu$-conull instead of $\mu$-positive.

	In fact, there is a hyperfinite Borel equivalence relation $E$ with code $d \in \bbD$ such that for all such $A$ above, the set of $e \in \bbD$ such that $(d, e) \in \calP_A^{unif}$ is $\bm{\Sigma^1_2}$-complete.
\end{thm}

\begin{proof}
	We will show this first when $A$ asserts that $P$ is $G_\delta$ and has comeager sections. Since $\N^\N$ is Borel isomorphic to $\N^\N \times 2^\N$, we may assume that $\bbD_d$ is instead an equivalence relation on $\N^\N \times 2^\N$ and that $\bbD_e \subseteq (\N^\N \times 2^\N) \times \N^\N$.

	Let $E$ be the hyperfinite Borel equivalence relation on $\N^\N \times 2^\N$ given by
	\[(x, y) E (x', y') \iff x = x' \And y \E_0 y',\]
	fix a code $d \in \bbD$ for $E$, and let $\calP_A^{unif}(E)$ denote the set of all $e \in \bbD$ so that $(d, e) \in \calP_A^{unif}$. We will show that $\calP_A^{unif}(E)$ is $\bm{\Sigma^1_2}$-complete.

	Let now $T$ be a tree on $\N \times \N$ (cf. \cite[2.C]{CDST}). Each such tree $T$ defines a closed subset $[T] \subseteq \N^\N \times \N^\N$ given by
	\[[T] = \{(x, y) \in \N^\N \times \N^\N : \forall n \, ((\res{x}{n}, \res{y}{n}) \in T)\}.\]
	We say $[T]$ admits a \textbf{full Borel uniformization} if there is a Borel map $f: \N^\N \to \N^\N$ so that $(x, f(x)) \in [T]$ for all $x \in \N^\N$, and we denote by $\FBU$ the set of trees on $\N \times \N$ which admit full Borel uniformizations.

	By the proof of \cref{thm:equivalence-uniformization-smooth}, and considering $\N^\N$ as a co-countable set in $2^\N$, there is a $G_\delta$ set $P \subseteq 2^\N \times \N^\N$ with comeager sections which is $\E_0$ invariant, and so that
	\[\bigcap_{x \in C} P_x = \emptyset\]
	whenever $C \subseteq 2^\N$ is $\mu$-positive, where $\mu$ is the uniform product measure on $2^\N$. Given a tree $T$ on $\N\times \N$, define $P_T \subseteq (\N^\N \times 2^\N) \times \N^\N$ by
	\[P_T(x, y, z) \iff P(y, z) \lor (x, z) \in [T].\]
	Note that $P_T$ is $G_\delta$, $E$-invariant, and has comeager sections.

	\begin{claim}\label{claim:reduction-FBU}
		$[T]$ admits a full Borel uniformization iff $P_T$ admits a Borel $E$-invariant uniformization.
	\end{claim}

	\begin{claimproof}
		If $f$ is a full Borel uniformization of $[T]$, then $g(x, y) = f(x)$ is an $E$-invariant Borel uniformization of $P_T$. Conversely, suppose $g$ were an $E$-invariant Borel uniformization of $P_T$. For $x \in \N^\N$, let $g_x(y) = g(x, y)$. Then $g_x: 2^\N \to \N^\N$ is $\E_0$-invariant, hence constant on a $\mu$-conull set $C \subseteq 2^\N$. Since
		\[\bigcap_{y \in C} P_y = \emptyset,\]
		we cannot have $P(y, g_x(y))$ for all $y \in C$, and so $(x, g_x(y)) \in [T]$ for all $y \in C$. Thus
		\[f(x) = z \iff \forall^*_\mu y (g(x, y) = z)\]
		is a full Borel uniformization of $[T]$ (cf. \cite[17.25, 17.26]{CDST} and the paragraphs following it).
	\end{claimproof}

	By identifying trees on $\N \times \N$ with their characteristic functions, we can view the space of trees as a closed subset of $2^\N$. The set $B$ given by
	\[B(T, x, y, z) \iff T \text{ is a tree and } P_T(x, y, z)\]
	is clearly Borel, so there is a Borel map $p$ such that for each tree $T$, $p(T) \in \bbD$ and $\bbD_{p(T)} = P_T$. It follows by \cref{claim:reduction-FBU} that $\FBU = p^{-1}(\calP_A^{unif}(E))$. By \cite[Lemma~5.3]{AK}, the set $\FBU$ is $\bm{\Sigma^1_2}$-complete, and hence so is $\calP_A^{unif}(E)$.

	The cases 1--3 follow from this as well. For 4--6, simply replace $P$ in the above proof with an $F_\sigma$ set $Q \subseteq 2^\N \times \N^\N$ with $\mu$-conull sections which is $\E_0$-invariant, and so that
	\[\bigcap_{x \in C} Q_x = \emptyset\]
	whenever $C \subseteq 2^\N$ is non-meager, which exists by the proof of \cref{thm:equivalence-uniformization-smooth}. The proof is then identical, after replacing the measure quantifier $\forall^*_\mu y$ in the proof of \cref{claim:reduction-FBU} by the category quantifier $\forall^* y$ (c.f. \cite[8.J, 16.A]{CDST}).
\end{proof}

\begin{rmk}
	We do not know the complexity of $\calP_A^{unif}$ when $A$ asserts that $P$ is $G_\delta$ and has comeager, $\mu$-conull sections for a probability Borel measure $\mu$. By the proof of \cref{thm:ramsey-example}, there is an $\E_0$-invariant $G_\delta$ set $R \subseteq [\N]^{\aleph_0} \times \N^\N$ with comeager, $\mu$-conull sections satisfying
	\[\bigcap_{x \in C} P_x = \emptyset\]
	for all Ramsey-positive sets $C \subseteq [\N]^{\aleph_0}$. One can define $P_T$ for a tree $T$ on $\N \times \N$ as in the proof of \cref{thm:complexity-large-sections}, however the ``if'' direction of our proof of \cref{claim:reduction-FBU} no longer works (cf. \cite{Sa}).
\end{rmk}

\subsection{Proof of \texorpdfstring{\cref{prop:K-sigma-failure}}{Proposition~\ref*{prop:K-sigma-failure}}}\label{sec:K-sigma-failure}

By \cite[18.17]{CDST}, there is a $G_\delta$ set $R \subseteq \N^\N \times 2^\N$ with $\operatorname{proj}_{\N^\N}(R) = \N^\N$ which does not admit a Borel uniformization. Write $R = \bigcap_n Q_n$, $Q_n \subseteq \N^\N \times 2^\N$ open, and define $P$ by
\[P(n, x, y) \iff Q_n(x, y).\]
Let $(n, x) F (m, x') \iff x = x'$. Then $F$ is a smooth countable Borel equivalence relation, $P$ is open, and if $C = [(n, x)]_F$ is an $F$-class then
\[\bigcap_{u \in C} P_u = \bigcap_n P_{(n, x)} = \bigcap_n (Q_n)_x = R_x \neq \emptyset.\]

Suppose now towards a contradiction that $g: \N \times \N^\N \to 2^\N$ is an $F$-invariant uniformization of $P$. Define $f: \N^\N \to 2^\N$ by $f(x) = g(0, x)$. Then $f(x) = g(0, x) = g(n, x) \in P_{(n, x)}$ for all $n$, so $f(x) \in \bigcap_n P_{(n, x)} = R_x$, a contradiction.

\section{On \texorpdfstring{\cref{conj:countable-uniformization}}{Conjecture~\ref*{conj:countable-uniformization}}}\label{sec:countable-uniformization}
Concerning \cref{conj:countable-uniformization}, we first note the following analog of \cref{lem:uniformization-closed-reducibility}.

\begin{lem}\label{lem:countable-uniformization-closed-reducibility}
Let $E,F$ be  Borel equivalence relations on Polish spaces $X,X'$, resp., such that $E\leq_B E'$. If $E$ fails (b) (resp., (c), (d)), so does $E'$.
\end{lem}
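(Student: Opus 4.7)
The plan is to mimic the proof of \cref{lem:uniformization-closed-reducibility} essentially verbatim, with a single additional observation: if $f\colon X \to X'$ is a Borel reduction of $E$ to $E'$ and $g'\colon X' \to Y^\bbN$ is a Borel $E'$-invariant countable uniformization of some $E'$-invariant relation $Q \subseteq X' \times Y$, then $g(x) \coloneqq g'(f(x))$ is a Borel $E$-invariant countable uniformization of the pullback $\{(x, y) : (f(x), y) \in Q\}$. Indeed, $x E x_0$ implies $f(x) E' f(x_0)$, hence $g'(f(x)) \mathrel{E^Y_{ctble}} g'(f(x_0))$ and so $\{g(x)_n\} = \{g(x_0)_n\}$, while sectionwise membership is preserved by construction.

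Given this, I would take a putative witness $Y, \mu, P$ to the failure of (b) for $E$ (the cases (c), (d) being analogous) and construct a witness for $E'$ exactly as in the proof of \cref{lem:uniformization-closed-reducibility}. Namely, define the $\bm{\Pi^1_1}$ set $P'(x', y) \iff \forall x \in X (f(x) E' x' \implies P(x, y))$ and the $\bm{\Sigma^1_1}$ set $P''(x', y) \iff \exists x \in X (f(x) E' x' \wedge P(x, y))$; both are $E' \times \Delta_Y$-invariant, so Solovay's $\bm{\Pi^1_1}$-rank theorem together with boundedness provides a Borel $E' \times \Delta_Y$-invariant set $P'''$ with $P'' \subseteq P''' \subseteq P'$, yielding $P'''_{f(x)} = P_x$ for all $x \in X$. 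Then take $Z \subseteq X'$ to be $\{x' : \mu(P'''_{x'}) > 0\}$ in case (b), the analogous ``non-meager section'' set in case (c), or, in case (d), a Borel set sandwiched via separation between $[f(X)]_{E'}$ and the $\bm{\Pi^1_1}$ set $\{x' : P'''_{x'} \text{ is } K_\sigma \text{ and nonempty}\}$ (using \cite[35.47]{DST}). Finally let $Q \coloneqq P''' \cap (Z \times Y)$, extended outside $Z$ appropriately (by all of $Y$ in cases (b), (c); by $\{y_0\}$ for a fixed $y_0 \in Y$ in case (d)) so that every section satisfies the requisite largeness/$K_\sigma$ condition and $Q_{f(x)} = P_x$.

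The conclusion is then immediate from the opening observation: any hypothetical Borel $E'$-invariant countable uniformization $g'$ of $Q$ would, upon composition with $f$, yield a Borel $E$-invariant countable uniformization of $P$, contradicting the failure of the relevant property for $E$. There is essentially no obstacle here --- the entire argument is a direct transcription of \cref{lem:uniformization-closed-reducibility}, the one point requiring verification being the trivial fact that $E'$-invariant countable uniformizations pull back through $f$ to $E$-invariant countable uniformizations.
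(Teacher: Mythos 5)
Your proposal is correct and matches the paper exactly: the paper's own proof of this lemma consists of the single remark that it is identical to the proof of \cref{lem:uniformization-closed-reducibility}, which is precisely the transcription you carry out, and your added observation that an $E'$-invariant countable uniformization of $Q$ pulls back through the reduction $f$ to an $E$-invariant countable uniformization of $P$ is the (correct) point that makes the transcription go through. The only detail worth flagging is that in case (d) the separating set $Z$ must be chosen $E'$-invariant (as in the original proof) so that $Q$ itself is $E'$-invariant, but this is available since both $[f(X)]_{E'}$ and the $\bm{\Pi^1_1}$ set are $E'$-invariant.
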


The proof is identical to that of \cref{lem:uniformization-closed-reducibility}. Note now that any countable Borel equivalence relation $E$ trivially satisfies (b), (c), and (d), so by \cref{lem:countable-uniformization-closed-reducibility}, in \cref{conj:countable-uniformization}, (a) implies (b), (c), and (d).

To verify then \cref{conj:countable-uniformization}, one needs to show that if $E$ is not reducible to countable, then (b), (c), and (d) fail. It is an open problem (see \cite[end of Section 6]{HK}) whether the following holds:

\begin{prob}
Let $E$ be a Borel equivalence relation which is not reducible to countable. Then one of the following holds:

(1) $\E_1\leq_B E$, where $\E_1$ is the following equivalence relation on $(2^\N)^\N$:
\[
x\E_1 y \iff \exists m \forall n \geq m (x_n = y_n);
\]

(2) There is a Borel equivalence relation $F$ induced by a turbulent continuous action of a Polish group on a Polish space such that $F\leq_B E$;

(3) $\E_0^\N \leq_B E$, where $\E_0^\N$ is the following equivalence relation on $(2^\N)^\N$:
\[
x\E_0^\N y \iff \forall n(x_n \E_0 y_n).
\]
\end{prob}

It is therefore interesting to show that (b), (c), and (d) fail for $\E_1$, $F$ as in (2) above, and $\E_0^\N$. Here are some partial results.

\begin{prop}
Let $E$ be a Borel equivalence relation which is not reducible to countable but is Borel reducible to a Borel equivalence relation $F$ with $K_\sigma$ classes. Then $E$ fails (d). In particular, $\E_1$ and $\E_2$ fail (d), where $\E_2$ is the following equivalence relation on $2^\N$:
\[x \E_2 y \iff \sum_{n : x_n \neq y_n} \frac{1}{n+1} < \infty.\]
\end{prop}

\begin{proof}
Suppose $E,F$ live on the Polish spaces $X,Y$, resp., and let $g\colon X\to Y$ be a Borel reduction of $E$ to $F$. Define $P\subseteq X\times X$ as follows:
\[
(x,y) \in P \iff g(x) F y.
\]
Clearly $P$ is $E$-invariant and has $K_\sigma$ sections. Suppose then that $P$ admitted a Borel $E$-invariant countable uniformization $f\colon X\to Y^\N$. Then define $h\colon X\to X$ by $g(x) = f(x)_0$. Then by \cite[Proposition 3.7]{CBER}, $h$ shows that $E$ is reducible to countable, a contradiction.
\end{proof}

Concerning (b) and (c) for $\E_1$, the following is a possible example for their failure.

\begin{prob}\label{prob:E_1-counterexample}
Let $X$ = $(2^\N)^\N, Y = 2^\N$ and define $P\subseteq X\times Y$ as follows:
\[
(x,y) \in P \iff \exists m \forall n\geq m (x_n \not= y),
\]
so that $P$ is $\E_1$-invariant and each section $P_x$ is co-countable, so has $\mu$-measure 1 (for $\mu$ the product measure on $Y$) and is comeager. Is there a Borel $\E_1$-invariant countable uniformization of $P$?
\end{prob}

One can show the following weaker result, which provides a Borel anti-diagonalization theorem for $\E_1$.

\begin{prop}\label{prop:E_1-anti-diagonalization}
Let $f\colon (2^\N)^\N \to 2^\N$ be a Borel function such that $x \E_1 y \implies f(x) = f(y)$. Then there is $x\in (2^\N)^\N$ such that for infinitely many $n$, $f(x) = x_n$.

Thus if $X,Y, P$ are as in \cref{prob:E_1-counterexample}, $P$ does not admit a Borel $\E_1$-invariant uniformization.
\end{prop}

\begin{proof}
For any nonempty countable set $S\subseteq 2^\N$ consider the product space $S^\N$ with the product topology, where $S$ is taken to be discrete. Denote by $\E_0 (S)$ the equivalence relation on $S^\N$ given by $x\E_0 (S)  y \iff \exists m \forall n \geq m (x_n = y_n)$. This is generically ergodic and for $x,y \in S^\N$ we have that $x\E_0 (S)  y \implies f(x) = f(y)$, so there is (unique) $x_S\in 2^\N$ such that $f(x) = x_S$, for comeager many $x\in S^\N$. Clearly $x_S$ can be computed in a Borel way given any $x\in (2^\N)^\N$ with $S= \{ x_n \colon n \in \N\}$, i.e., we have  a Borel function $F\colon (2^\N)^\N \to 2^\N$ such that
\[
\{ x_n\colon n\in \N \} = \{y_n \colon n\in \N\}  =S \implies F((x_n) ) = F((y_n)) = x_S.
\]
We now use the following Borel anti-diagonalization theorem of H. Friedman, see \cite[Theorem 2, page 23]{S}:

\begin{thm}[H. Friedman]\label{thm:friedman}
Let $E$ be a Borel (even analytic) equivalence relation on a Polish space $X$. Let $F\colon X^\N \to X$ be a Borel function such that
 \[
\{ [x_n]_E\colon n\in \N \} = \{[y_n]_E\colon n\in \N\} \implies F((x_n) ) \  E  \ F((y_n)).
\]
Then there is $x\in X^\N$ and $i\in \N$ such that $F(x) E x_i$.
\end{thm}

Applying this to $E$ being the equality relation on $2^\N$ and $F$ as above, we conclude that for some $S$, we have that $x_S\in S$. Then for comeager many $x\in S^\N$ we have that $x_n = x_S$, for infinitely many $n$, and also $(x,x_S) \in P$, a contradiction.
\end{proof}

In response to a question by Andrew Marks, we note the following version of \cref{prop:E_1-anti-diagonalization} for $\E_1$ restricted to injective sequences. Below $[2^\N]^\N$ is the Borel subset of $(2^\N)^\N$ consisting of injective sequences and $x\leq_T y$ means that $x$ is recursive in $y$.

\begin{prop}
Let $g\colon [2^\N]^\N \to 2^\N$ be a Borel function such that $x \E_1 y \implies g(x) = g(y)$. Then there is $y\in [2^\N]^\N$ such that for all $n$, $g(y) \leq_T y_n$.
\end{prop}

\begin{proof}
Fix a recursive bijection  $x\mapsto \langle x \rangle$ from $(2^\N)^\N$ to $2^\N$ and for each $i\in \N$ let $\bar{i}\in 2^\N$ be the characteristic function of $\{i\}$. Then for each $x\in (2^\N)^\N$ and $i\in \N$, put
\[
\bar{x}^i = \langle \bar{i}, x_i, x_{i+1}, \dots \rangle \in 2^\N
\]
and
\[
x' = \langle \bar{x}^0, \bar{x}^1, \dots\rangle\in [2^\N]^\N.
 \]
Note that $x \E_1 y \implies x' \E_1 y'$. Finally define $f\colon (2^\N)^\N \to 2^\N$ by $f(x) = g(x')$. Then by \cref{prop:E_1-anti-diagonalization}, there is $x\in (2^\N)^\N$ such that for infinitely many $n$ we have that $f(x) = x_n$. Let $y = x'$.

If $n$ is such that $f(x) = g(y) = x_n$, then as $x_n \leq_T \bar{x}^k = y_k, \forall k\leq n$, we have that  $g(y) \leq_T y_k, \forall k\leq n$. Since this happens for infinitely many $n$, we have that $g(y) \leq_T y_n$, for all $n$.
\end{proof}

We do not know anything about $\E_0^\N$ but if we let $\E_{ctble}$ be the equivalence relation $\E_{ctble}^{2^\N}$ (so that $\E_0^\N <_B \E_{ctble}$), we have:

\begin{prop}\label{prop:Ectbl}
$\E_{ctble}$ fails (b) and (c).
\end{prop}

\begin{proof}
We will prove that $\E_{ctble}$ fails (b), the proof that it also fails (c) being similar. Let $X = (2^\N)^\N, Y = 2^\N$, let $\mu$ be the usual product measure on $Y$ and put $E= \E_{ctble}$. Define $P\subseteq X\times Y$ by
\[
(x, y) \in P \iff y\notin \{x_n\colon n \in \N\}.
\]
Clearly $\mu (P_x) =1$ and $P$ is $E$-invariant. Assume now, towards a contradiction, that there is a Borel function $f\colon X \to Y^\N$ such that $\forall x\in X \forall n\in \N ( (x, f(x)_n) \in P)$ and $x_1 E x_2 \implies \{f(x_1)_n\colon n\in \N\} = \{f(x_2)_n\colon n\in \N\}$. Then
\[
 \forall x\in X\big( \{f(x)_n \colon n\in \N\} \cap \{x_n \colon n\in \N\} =\emptyset\big).
\]
Define $F\colon X^\N \to Y^\N$ as follows: Fix a bijection $(i,j) \mapsto \langle i, j \rangle$ from $\N^2$ to $\N$ and for $n\in \N$ put $n = \langle n_0, n_1 \rangle$. Given $x\in X^\N$, define $x'\in X$ by $x'_n = (x_{n_0})_{n_1} $. Then let $F(x) = f(x')$. First notice that for $x = (x_n),y = (y_n)\in X^ \N$,
 \[
\{ [x_n]_E\colon n\in \N \} = \{[y_n]_E\colon n\in \N\} \implies x'   E  y' \implies  F(x )   E   F(y).
\]
Thus by \cref{thm:friedman}, there is some $x\in X^\N$ and $i\in \N$ such that $F(x) E x_i$, i.e., $f(x') E x_i$ or $\{f(x')_n\colon n\in \N\} = \{(x_i)_n \colon n\in \N\} = \{ x'_{\langle i, n\rangle}\colon n\in \N\}$. Thus $\{f(x')_n \colon n\in \N\} \cap \{x'_n \colon n\in \N\} \not=\emptyset$, a contradiction.
\end{proof}

We do not know if $\E_{ctble}$ fails (d). We also do not know anything about equivalence relations induced by turbulent continuous actions of Polish groups on Polish spaces.

Finally, we note that by the dichotomy theorem of Hjorth concerning reducibility to countable (see \cite{H} or \cite[Theorem 3.8]{CBER}), in order to prove \cref{conj:countable-uniformization} for Borel equivalence relations induced by Borel actions of Polish groups, it would be sufficient to prove it for Borel equivalence relations induced by stormy such actions.

\printbibliography

\noindent Department of Mathematics

\noindent California Institute of Technology

\noindent Pasadena, CA 91125

\noindent\textsf{kechris@caltech.edu}

\noindent\textsf{mwolman@caltech.edu}

\end{document}